\numberwithin{equation}{section} 
\newtheorem{theorem}{Theorem}[section]
\newtheorem{corollary}[theorem]{Corollary}
\newtheorem{lemma}[theorem]{Lemma}
\newtheorem{proposition}[theorem]{Proposition}
\theoremstyle{definition} 
\newtheorem{definition}[theorem]{Definition}
\newtheorem{example}{Example}[section]
\newtheorem{remark}[theorem]{Remark}
\newtheorem{assumptions}{Assumptions}
\newtheorem*{ack}{Acknowledgments}
\title{On the spectrum of sets made of cores and tubes}
\author[Bianchi]{Francesca Bianchi}
\address[F.\ Bianchi]{Dipartimento di Scienze Matematiche, Fisiche e Informatiche
	\newline\indent
	Universit\`a di Parma
	\newline\indent
	Parco Area delle Scienze 53/a, Campus, 43124 Parma, Italy}
\email{francesca.bianchi@unipr.it}
\author[Brasco]{Lorenzo Brasco}
\address[L.\ Brasco]{Dipartimento di Matematica e Informatica
	\newline\indent
	Universit\`a degli Studi di Ferrara
	\newline\indent
	Via Machiavelli 35, 44121 Ferrara, Italy}
\email{lorenzo.brasco@unife.it}
\author[Ognibene]{Roberto Ognibene}
\address[R.\ Ognibene]{Dipartimento di Matematica
	\newline\indent
	Universit\`a di Pisa
	\newline\indent
	Largo Bruno Pontecorvo 5, 56127 Pisa, Italy}
\email{roberto.ognibene@dm.unipi.it}
\date{\today}
\dedicatory{Dedicated to Giuseppe Buttazzo, a master of Calculus of Variations}
\subjclass[2010]{35P15, 35B38, 58E05}
\keywords{Eigenvalue estimates, Poincar\'e inequality, inradius, Makai-Hayman inequality, Palais-Smale sequence, curved waveguide.}
\begin{document}

\begin{abstract}
We analyze the spectral properties of a particular class of unbounded open sets. These are made of a central bounded ``core'', with finitely many unbounded tubes attached to it. We adopt an elementary and purely variational point of view, studying the compactness (or the defect of compactness) of level sets of the relevant constrained Dirichlet integral.
As a byproduct of our argument, we also get exponential decay at infinity of variational eigenfunctions. Our analysis includes as a particular case a planar set (sometimes called ``bookcover''), already encountered in the literature on curved quantum waveguides. J. Hersch suggested that this set could provide the sharp constant in the {\it Makai-Hayman inequality} for the bottom of the spectrum of the Dirichlet-Laplacian of planar simply connected sets. We disprove this fact, by means of a singular perturbation technique. 
\end{abstract}
	
\maketitle

\begin{center}
\begin{minipage}{10cm}
\small
\tableofcontents
\end{minipage}
\end{center}

\section{Introduction}
\subsection{Constrained critical points of Dirichlet integrals}
For an open set $\Omega\subseteq\mathbb{R}^N$, we indicate by $\mathcal{S}_2(\Omega)$ the sphere
\[
\mathcal{S}_2(\Omega)=\Big\{\varphi\in W^{1,2}_0(\Omega)\, :\, \|\varphi\|_{L^2(\Omega)}=1\Big\}.
\]
Here, by $W^{1,2}_0(\Omega)$ we mean the closure of $C^\infty_0(\Omega)$ in the standard Sobolev space $W^{1,2}(\Omega)$.
Then, we define
\begin{equation*}
\lambda_1(\Omega):=\inf_{\varphi\in \mathcal{S}_2(\Omega)} \int_\Omega |\nabla \varphi|^2\,dx.
\end{equation*}
This is nothing but the sharp constant in the Poincar\'e inequality for functions in $W^{1,2}_0(\Omega)$. Of course, we have $\lambda_1(\Omega)=0$ each time that $\Omega$ does not support such an inequality. 
Necessary and sufficient conditions on $\Omega$ assuring that $\lambda_1(\Omega)>0$ can be found for example in \cite[Chapter 15, Section 4]{Maz}.
\par
Even when $\Omega$ supports the Poincar\'e inequality, the value $\lambda_1(\Omega)$ is not necessarily attained, i.e. it may be just an infimum. A sufficient condition for $\lambda_1(\Omega)$ to be a minimum is the compactness of the embedding 
\begin{equation}
\label{embedding}
	W^{1,2}_0(\Omega)\hookrightarrow L^2(\Omega).
\end{equation}
In this case, there exists a minimizer $u_1\in\mathcal{S}_2(\Omega)$ and by optimality it solves in weak sense
\begin{equation}
\label{eigenequation}
-\Delta u_1=\lambda\,u_1,\qquad \mbox{ in }\Omega,
\end{equation}
with $\lambda=\lambda_1(\Omega)$, i.e. $\lambda_1(\Omega)$ is an eigenvalue of the Dirichlet-Laplacian on $\Omega$ and $u_1$ is an associated eigenfunction. Observe that we can always suppose that $u_1$ is non-negative, since both the constraint and the functional are invariant by the change $\varphi\mapsto |\varphi|$.
\par
However, in this situation, much more is true: the compactness of the embedding \eqref{embedding} entails that we can construct recursively 
a diverging sequence $\{\lambda_j(\Omega)\}_{j\in\mathbb{N}\setminus\{0\}}$ of positive numbers, each one being an eigenvalue of the Dirichlet-Laplacian on $\Omega$. Moreover, 
if we define\footnote{The constraint $u\in\mathcal{S}_2(\Omega)$ has no real bearing, since the equation \eqref{eigenequation} is linear. This is just a cheap way of saying that \eqref{eigenequation} admits a non-trivial solution.}
\[
\mathrm{Spec}(\Omega)=\Big\{\lambda\, :\, \eqref{eigenequation} \mbox{ admits a weak solution } u\in\mathcal{S}_2(\Omega)\Big\},
\]
we have that 
\[
\mathrm{Spec}(\Omega)=\Big\{\lambda_j(\Omega)\Big\}_{j\in\mathbb{N}\setminus\{0\}}.
\]
Finally, each $\lambda_j$ has the following variational characterization
\begin{equation}
\label{lambdak}
	\lambda_j(\Omega)=\inf\left\{ \max_{u\in F\cap \mathcal{S}_2(\Omega)} \int_\Omega|\nabla u|^2\,dx\,:\, F \subseteq W^{1,2}_0(\Omega) \mbox{ subspace with } \dim F=j \right\}.
\end{equation}
We refer for example to \cite[Chapter VI]{CH} for these facts.
\par
It is noteworthy to notice that, according to the Lagrange's multipliers rule, each element of $\mathrm{Spec}(\Omega)$ can be understood as a critical point of the Dirichlet integral
\[
\varphi\mapsto\int_\Omega |\nabla \varphi|^2\,dx,
\]
constrained to the ``manifold'' $\mathcal{S}_2(\Omega)$. In this interpretation, the associated eigenfunctions are the relevant critical points. The first eigenvalue $\lambda_1(\Omega)$ and an associated eigenfunction $u_1$ correspond to the global constrained minimum and a global minimizer, respectively. 
\par
Thus, the discussion above entails that when the embedding \eqref{embedding} is compact, the Critical Point Theory of this constrained functional is completely clear: there is only a discrete sequence of critical values, which coincides with $\{\lambda_j(\Omega)\}_{j\in\mathbb{N}\setminus\{0\}}$. Actually, a much more sophisticated conclusion can be drawn in this ``compact'' situation.
This is better elucidated by recalling the following definition, which is well-known in Critical Point Theory (see for example \cite[Chapter II, Section 2]{St}).
\begin{definition}
\label{def:PS}
Let $\lambda\ge 0$, we say that $\{\varphi_n\}_{n\in\mathbb{N}}\subseteq W^{1,2}_0(\Omega)$ is a {\it constrained Palais-Smale sequence at the level $\lambda$} if the following three properties hold:
\begin{enumerate}
\item $\varphi_n\in\mathcal{S}_2(\Omega)$, for every $n\in\mathbb{N}$;
\vskip.2cm
\item $\lim\limits_{n\to\infty}\|\nabla \varphi_n\|_{L^2(\Omega)}^2=\lambda$;
\vskip.2cm
\item we have\footnote{For every $u\in W^{1,2}_0(\Omega)$, we set
\[
\|-\Delta u-\lambda\,u\|_{W^{-1,2}(\Omega)}=\sup_{\varphi\in C^\infty_0(\Omega)} \left\{\left|\int_\Omega \langle \nabla u,\nabla \varphi\rangle\,dx-\lambda\,\int_\Omega u\,\varphi\,dx\right|\,:\,\|\varphi\|_{W^{1,2}(\Omega)}=1\right\}.
\]
}
\[
\lim_{n\to\infty} \|-\Delta \varphi_n-\lambda\,\varphi_n\|_{W^{-1,2}(\Omega)}=0.
\]
\end{enumerate}
\end{definition}
Then, as a consequence of the previous discussion, the only values $\lambda$ admitting a constrained Palais-Smale sequence are given by $\{\lambda_j(\Omega)\}_{j\in\mathbb{N}\setminus\{0\}}$. This is due to fact that if \eqref{embedding} is compact, then automatically a constrained Palais-Smale sequence would converge in $L^2(\Omega)$ (up to a subsequence). By condition (3), the limit function $u\in \mathcal{S}_2(\Omega)$ would be an eigenfunction and $\lambda$ an eigenvalue.

\subsection{Loss of compactness} What happens when the embedding \eqref{embedding} is no more compact? What can be said about the structure of critical values for the constrained Dirichlet integral, in this case? Is it possible to relate the behaviour of critical values to the ``defect of compactness'' of the embedding \eqref{embedding}?
\par
In this paper, we will try to (partially) answer these questions, in a particular class of open sets where the loss of compactness is controlled, in a suitable sense. Before presenting the class of open sets we want to deal with, we wish to make a couple of further observations.
\par
At first, we go back to the global infimum $\lambda_1(\Omega)$. Then we observe that the compactness of the embedding is certainly a too strong condition for this to be a minimum.
Indeed, it would be sufficient to know that a certain constrained sublevel set
\[
E_\Omega(t):=\left\{\varphi\in \mathcal{S}_2(\Omega)\, :\, \int_\Omega |\nabla \varphi|^2\,dx\le t\right\},\qquad \mbox{ with } t>\lambda_1(\Omega),
\]
is relatively compact in $L^2(\Omega)$, to infer the existence of a global minimizer $u_1$ (and thus of a first eigenfunction). In other words, if compactness of the embedding \eqref{embedding} is lost only for ``large energy levels'', we can still infer compactness of minimizing sequences.
\par
More generally, one can prove that if we define $\lambda_j(\Omega)$ by \eqref{lambdak} and compactness still holds for some $E_\Omega(t)$ with $t>\lambda_j(\Omega)$, then we still have room to prove that $\lambda_j(\Omega)$ actually defines a critical value (and thus an eigenvalue of the Dirichlet-Laplacian). 
\par
However, even these weaker conditions are in a certain sense too strong. By appealing to Definition \ref{def:PS}, it would be sufficient to know that $\lambda_j(\Omega)$ defined by \eqref{lambdak} admits a constrained Palais-Smale sequence (see above), enjoying a suitable form of compactness. More precisely, it would suffice that such a sequence admits a {\it weakly convergent subsequence} in $L^2(\Omega)$ to a limit function $u\not =0$. Indeed, even if this is not enough to assures that $u$ belongs to the constraint $\mathcal{S}_2(\Omega)$, nevertheless by linearity of $\varphi\mapsto -\Delta \varphi-\lambda\,\varphi$ we would get that $u$ is a non-trivial weak solution of \eqref{eigenequation}. Accordingly, we could conclude again that $\lambda_j(\Omega)$ is an eigenvalue of the Dirichlet-Laplacian on $\Omega$.
\par 
{\it \c{C}a va sans dire}, this compactness requirement on constrained Palais-Smale sequences is much weaker than the compactness of the sublevel sets $E_\Omega(t)$.

\subsection{A class of unbounded sets}
We are interested in open sets made of a ``central core'' $\Omega_0$, to which we attach a finite number of infinite cylindrical sets $\mathcal{C}_1,\dots,\mathcal{C}_k$. We admit that these cylindrical sets may have a non-empty pairwise intersection, provided the latter is bounded,  while we allow their boundaries to arbitrarily intersect. This class of sets is a generalization of those considered in \cite{BGRS}, among others.
\par
More precisely, throughout the whole paper we will make the following   

\begin{assumptions}\label{ass:Omega}
	For $N\ge 2$, let $\Omega\subseteq\mathbb{R}^N$ be an open set such that
	\begin{equation*}
		\Omega=\Omega_0\cup \bigcup_{i=1}^k \mathcal{C}_i,
	\end{equation*}
	where $k\in\mathbb{N}\setminus\{0\}$ and
	\begin{enumerate}
		\item[(A1)] $\Omega_0\subseteq\mathbb{R}^N$ is an open bounded set (it could be empty, see Example \ref{exa:cross});
		\vskip.2cm
		\item[(A2)] for $i=1,\dots,k$, the open set $\mathcal{C}_i$ is a cylindrical set given by
		\[		
		\mathcal{C}_{i}:=\textbf{R}_i(E_i\times(0,+\infty))+\mathbf{t}_i,
		\]
		where:
		\begin{itemize}
			\item	 $E_i\subseteq \mathbb{R}^{N-1}$ is an open bounded connected set;
			\vskip.2cm
			\item $\textbf{R}_i$ is a linear isometry represented by an $N\times N$ orthogonal matrix; 
			\vskip.2cm
			\item $\textbf{t}_i\in\mathbb{R}^N$ is a given point;
		\end{itemize}
		\vskip.2cm
		\item[(A3)] there exists $\varrho_0>0$ such that 
		\[
		\Big(\mathcal{C}_{i}\cap \mathcal{C}_j\Big)\setminus B_{\varrho_0}(0)=\emptyset,\qquad \mbox{ for every } i\not=j.
		\]
	\end{enumerate}
	\begin{figure}
		\includegraphics[scale=.25]{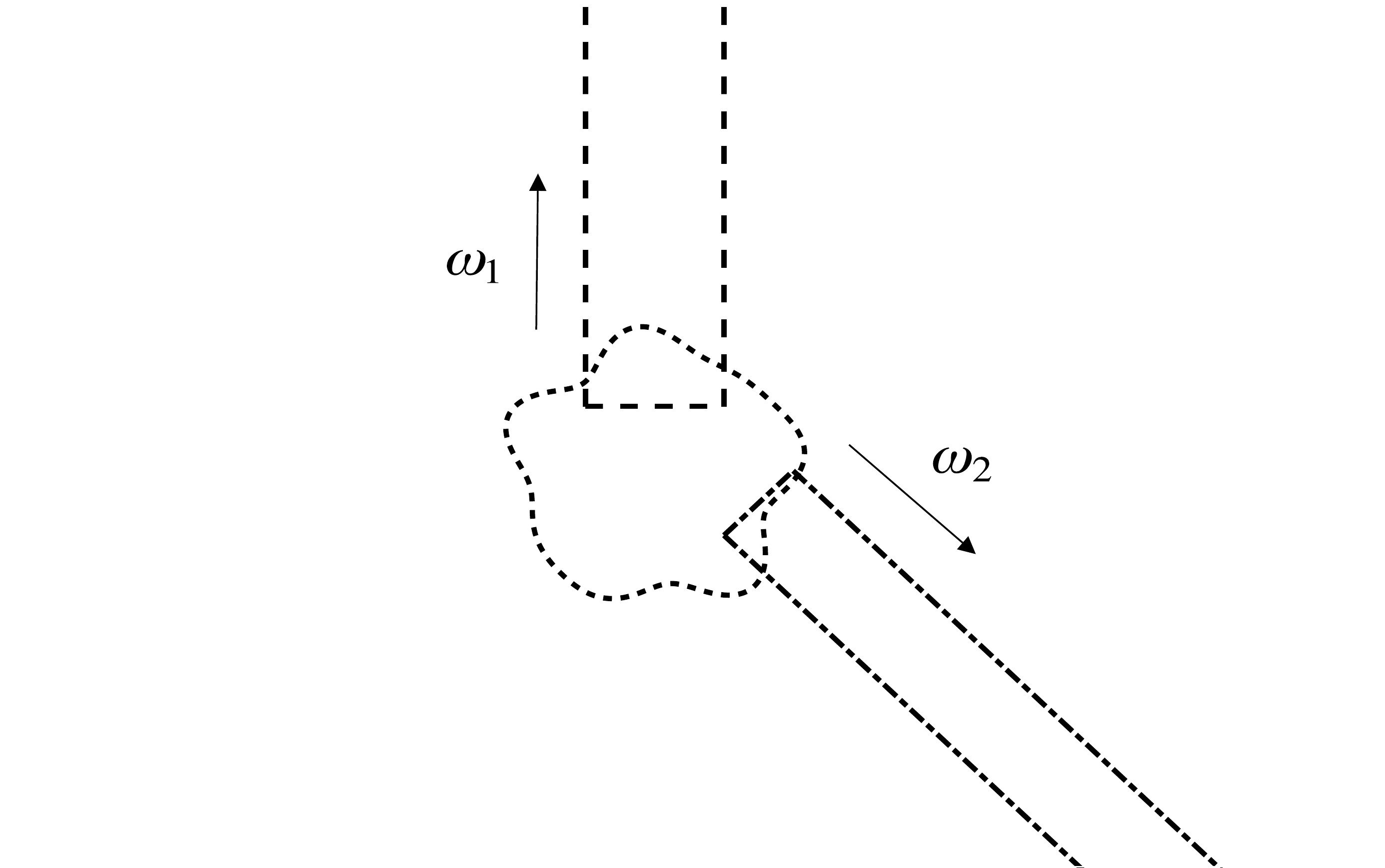}
		\caption{An example of set verifying Assumptions \ref{ass:Omega}, in dimension $N=2$ and with $k=2$ disjoint tubes $\mathcal{C}_1$ and $\mathcal{C}_2$.}	
		\label{fig:insieme2d}
	\end{figure}
	If we set $\mathbf{e}_N=(0,\dots,0,1)$, we also denote 
	\[
	\omega_i:=\textbf{R}_i(\mathbf{e}_N),\qquad \mbox{ for } i=1,\dots,k,
	\]
	which coincides with the direction of the axis of the cylindrical set $\mathcal{C}_i$ (see Figure \ref{fig:insieme2d}).
\end{assumptions}
For these sets, we will prove (see Theorem \ref{teo:first_eigen}) that if 
\[
\lambda_1(\Omega)<\mathcal{E}(\Omega):=\min\Big\{\lambda_1(E_1),\dots,\lambda_1(E_2)\Big\},
\]
then there exists a minimizing sequence
\[
\{u_n\}_{n\in\mathbb{N}}\subseteq\mathcal{S}_2(\Omega),\qquad \lim_{n\to\infty} \int_\Omega |\nabla u_n|^2=\lambda_1(\Omega),
\] 
which is compact in $L^2(\Omega)$. This assures that $\lambda_1(\Omega)$ is attained and there exists a first eigenfunction.
\par
In order to prove this, we will proceed as follows: we ``truncate'' $\Omega$ along the tubes, so to build a sequence of bounded sets $\{\Omega_n\}_{n\in\mathbb{N}}$ exhausting $\Omega$. Then, as a minimizing sequence we will take the sequence made of the first eigenfunctions of these sets, let us call it $\{u_1^n\}_{n\in\mathbb{N}}$. To get compactness of this sequence, we will rely on the equation satisfied by $u_1^n$. Indeed, this 
permits to get a Caccioppoli--type inequality ``localized at infinity'' along the tubes. We then join this estimate with Poincar\'e's inequality along the tubes $\mathcal{C}_i$, for which the relevant constant is given by\footnote{We recall that $\lambda_1$ is invariant by orthogonal transformations and translations, thus 
 we have 
\[
\lambda_1(\mathcal{C}_i)=\lambda_1(E_1\times (0,+\infty))=\lambda_1(E_1).
\]} $\lambda_1(E_i)$. By taking into account that $\lambda_1(\Omega)<\mathcal{E}(\Omega)$, we will get that $\{u_1^n\}_{n\in\mathbb{N}}$ must have a uniformly small $L^2$ norm at infinity. 
This finally gives the desired compactness, by means of the classical Riesz-Fr\'echet-Kolmogorov theorem. 
\par
In a nutshell: since the global infimum $\lambda_1(\Omega)$ is strictly less than the ``energy'' of all the tubes, minimizing sequences does not want to ``occupy too much'' a tube. Indeed, this would raise too much the value of the Dirichlet integral.
Rather, they try to concentrate as much as possible towards the ``core'' $\Omega_0$. This gives a gain of compactness.
\par
We point out that, as a direct byproduct of this proof, we get that the $L^2$ norm of a first eigenfunction for $\Omega$ must decay exponentially to $0$, at infinity. In turn, by means of a standard $L^\infty-L^2$ estimate ``localized at infinity'' (obtained for example by Moser's iteration), we can upgrade this exponential decay to a pointwise one (see Theorem \ref{teo:eigenproperties}).
\par
Actually, there is nothing specific to $\lambda_1$ in the previous argument.
More generally, if for every $j\in\mathbb{N}\setminus\{0\}$ we define $\lambda_j(\Omega)$ by \eqref{lambdak},
we can prove that whenever we have
\[
\lambda_j(\Omega)<\mathcal{E}(\Omega),
\]
then $\lambda_j(\Omega)$ is an eigenvalue (see Theorem \ref{teo:higher_eigen}). The argument is essentially the same as above: we rely again on the equations for the first $j-$eigenfunctions of the truncated sets $\Omega_n$.
\vskip.2cm\noindent
From this argument, we see that $\mathcal{E}(\Omega)$ plays an important role: it can be understood as the energy threshold under which some compactness survives and above which compactness {\it may completely fail}. 
We will make this precise and complement our analysis by showing in Proposition \ref{prop:PS} that {\it for every} 
\[
\lambda\ge \mathcal{E}(\Omega),
\]
there exists a sequence of ``almost critical points'' for the Dirichlet integral constrained to the sphere $\mathcal{S}_2(\Omega)$, for which compactness is completely lost, i.e. it weakly converges in $L^2(\Omega)$ to $0$. Thus, it is impossible to retrieve a critical point associated to $\lambda$ from such a sequence.  
More precisely, by using the definitions recalled above, we will show for every such $\lambda$ there exists a constrained Palais-Smale sequence weakly converging to $0$. We will construct this sequence ``by hands'', exploiting the geometry of our sets.
\par
We can build a bridge between Spectral Theory and Critical Point Theory: indeed, these particular sequences are nothing but
a variational reformulation of {\it singular Weyl sequences}, appearing in Spectral Theory, see for example \cite[Chapter 9, Section 2]{BS} and \cite[Chapter 6, Section 4]{Te}. These sequences are important since they permit to characterize the {\it essential spectrum} of a self-adjoint operator, see for example \cite[Theorem 9.2.2]{BS}. In this way, we retrieve a variational characterization of the essential spectrum of the Dirichlet-Laplacian on our sets.

\subsection{A detour: the Makai-Hayman inequality}
In order to neatly justify our original interest towards this class of sets, it is necessary to take a small {\it detour}.
Thus, let us now briefly turn our attention to an apparently unrelated problem: the so-called {\it Makai-Hayman inequality} in Spectral Geometry.
At this aim, we need to recall the definition of {\it inradius} $r_\Omega$ of an open set $\Omega\subseteq\mathbb{R}^N$. This is the quantity given by
\begin{equation*}
	r_\Omega:=\sup\Big\{ r>0\colon \text{there exists }x\in\Omega~\text{such that }B_r(x)\subseteq\Omega \Big\}.
\end{equation*}
A remarkable result due to Makai (see \cite{Mak}) asserts that there exists a universal constant $C>0$ such that
\begin{equation}
\label{MakHay}
\lambda_1(\Omega)\ge \frac{C}{r_\Omega^2},
\end{equation}
for every $\Omega\subseteq\mathbb{R}^2$ open {\it simply connected set} with finite inradius. The same result was then rediscovered independently by Hayman in \cite{Ha}, by means of a different proof. For this reason, we will call \eqref{MakHay} the Makai-Hayman inequality. 
\par
For completeness, we mention that this result can be extended to more general planar open sets, having nontrivial topology. These generalizations are contained in a series of papers appeared shortly after Hayman's one. We cite for example Croke \cite{Cr}, Graversen and Rao \cite{GR}, Osserman \cite{Os} and Taylor \cite{Ta}. More recently, some of these results have been generalized to the $p-$Laplacian by Poliquin (see \cite{Po}) and to the fractional Laplacian by the first two authors (see \cite{BiBr2, BiBr1}) .
\vskip.2cm\noindent 
The validity of \eqref{MakHay} immediately leads to a natural question: what is the sharp constant in such an inequality? 
In other words, if for every $\Omega\subseteq\mathbb{R}^2$ open simply connected set with finite inradius, we define the scale invariant quantity
\begin{equation}
\label{rhoberto}
\rho(\Omega):=r_\Omega^2\,\lambda_1(\Omega),
\end{equation}
we want to determine the value of the following spectral optimization problem
\[
C_{MH}=\inf\Big\{\rho(\Omega)\,:\, \Omega\subseteq \mathbb{R}^2 \mbox{ open simply connected with }r_\Omega<+\infty\Big\}.
\]
In his paper \cite{Mak}, Makai showed that 
\[
\frac{1}{4}\le C_{MH}<\frac{\pi^2}{4}.
\]
It is noteworthy to notice that
\[
\frac{\pi^2}{4}=\inf\Big\{\rho(\Omega)\,:\, \Omega\subseteq \mathbb{R}^2 \mbox{ open convex set with }r_\Omega<+\infty\Big\},
\]
i.e. this is the sharp constant in \eqref{MakHay}, in the restricted class of convex sets. Such a value is attained for example by infinite strips.
We recall that the determination of the sharp constant for convex sets is a celebrated result due to Hersch, see \cite[Th\'eor\`eme 8.1]{He}. 
\par
In particular, we see that relaxing ``convexity'' to ``simple connectedness'' certainly lower the value of the optimal constant.
However, the exact determination of $C_{MH}$ is still an open problem: at present, the best known result is that
\[
0.6197 < C_{MH}<2.095. 
\] 
The lower bound is due to Ba\~nuelos and Carroll (see \cite[Corollary 6.1]{BC}), while the upper bound has been proved by Brown (see \cite[Section 5]{Br}), by slightly improving a method used in \cite{BC}. After these results, essentially no progress has been made on the problem. We refer the reader to \cite{Bu} and \cite{Hen} for a comprehensive overwiev of results in spectral optimization.

\subsection{An example by Hersch} 
We can now explain our interest towards sets having the structure encoded by Assumptions \ref{ass:Omega}.
Indeed, in his review of Makai's paper (see \cite{HeRev}), Hersch suggested that an optimal set providing the sharp value $C_{MH}$ could be the following 
\begin{equation}
\label{pipe}
	\mathbf{H}:=\Big\{(x_1,x_2)\in\mathbb{R}^2\,:\, x_1^2+x_2^2<1,\, x< 0\Big\}\cup\Big\{(x_1,x_2)\in\mathbb{R}^2\,:\, 0<|x_2|<1,\, x_1\geq 0 \Big\},
\end{equation}
see Figure \ref{fig:hersch}. We will call it {\it Hersch's pipe}. This is a slit disk, to which two infinite tubes of constant width are attached.
\begin{figure}
\includegraphics[scale=.2]{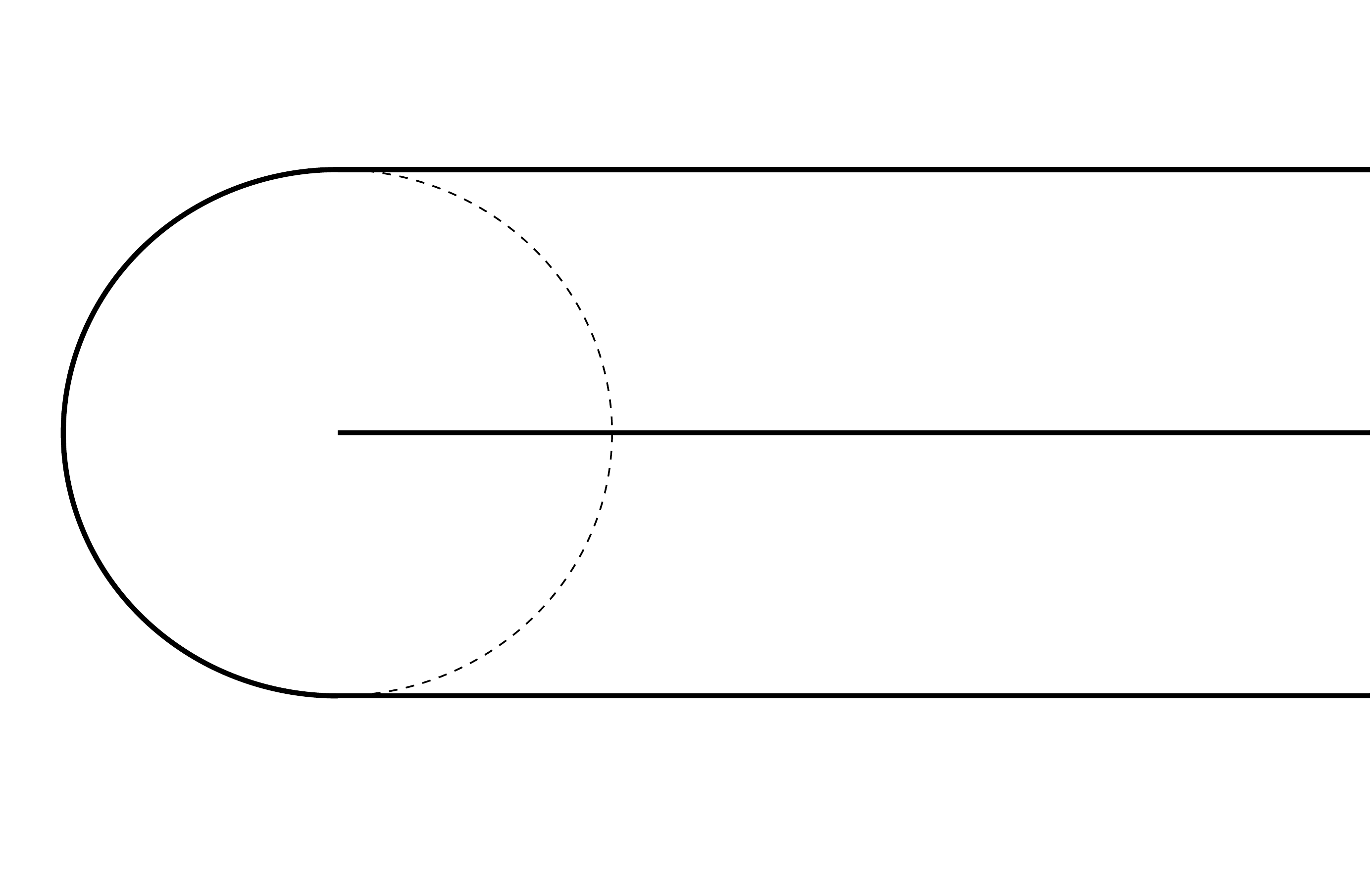}
\caption{The set $\mathbf{H}$: it can be built by taking the disk $\{(x_1,x_2)\,:\, x_1^2+x_2^2<1\}$, removing the segment $[0,1]\times\{0\}$ and then adding two parallel half-strips.}
\label{fig:hersch}
\end{figure}
Thus, it is not difficult to see that $\mathbf{H}$ is just a particular instance of the sets studied in this paper. More precisely, we observe that $\mathbf{H}$ satisfies Assumptions \ref{ass:Omega}, by taking $N=2$, $k=2$ and
\[
\Omega_0=\Big\{(x_1,x_2)\in\mathbb{R}^2\,:\, x_1^2+x_2^2<1\Big\}\setminus\Big([0,1]\times \{0\}\Big) \qquad (\mbox{\it slit disk}),
\]
\[
\mathcal{C}_1=(0,+\infty)\times(0,1),\qquad \mathcal{C}_2=(0,+\infty)\times (-1,0).
\]
It is not difficult to see that 
\[
\lambda_1(\Omega_0)=\pi^2,
\]
with a first eigenfunction given by (in polar coordinates)
\[
u(\varrho,\vartheta)=\frac{\sin(\pi\,\varrho)}{\sqrt{\pi\,\varrho}}\,\sin\left(\frac{\vartheta}{2}\right),\qquad \mbox{ for }  0<\varrho<1,\, 0<\vartheta<2\,\pi,
\]
see for example \cite[Exercise 6.8.9]{TCS}.
By strict monotonicity with respect to set inclusion, we thus have $\lambda_1(\mathbf{H})<\lambda_1(\Omega_0)=\pi^2$, while $r_{\mathbf{H}}=r_{\Omega_0}=1/2$.
Thus, by recalling the notation \eqref{rhoberto}, we have
\[
\rho(\mathbf{H})<\rho(\Omega_0)=\left(\frac{1}{2}\right)^2\,\pi^2=\frac{\pi^2}{4}.
\]
However, we will show in Section \ref{sec:7} that actually $\mathbf{H}$ {\it is not an optimal shape} for $C_{MH}$. We will prove this by a {\it singular perturbation technique}. Namely, we will see that adding a suitable (finite) number of thin tubes on the flat part of $\partial\mathbf{H}$ will decrease the quantity $\rho(\mathbf{H})$. 
In this part, we will greatly rely on the technique recently studied (in greater generality) in \cite{AO2023} by L. Abatangelo and the third author (see also \cite{FO}). We point out that, in order to apply the results of \cite{AO2023}, we will need to know that $\lambda_1(\mathbf{H})$ is attained, i.e. it admits a first eigenfunction $u_1$. It is precisely here that the results of the first part of the paper (i.e. Theorem \ref{teo:first_eigen}) will be useful.
\vskip.2cm\noindent
Let us briefly explain which is the crucial point permitting to lower the value $\rho$ by adding tubes. We first observe that if we add to $\mathbf{H}$ a small tube of width $\varepsilon\ll 1$, by calling $\mathbf{H}_\varepsilon$ the new set we would get
\[
r_{\mathbf{H}_\varepsilon}^2-r_{\mathbf{H}}^2\sim C_1\,\varepsilon^2\qquad \mbox{ and }\qquad \lambda_1(\mathbf{H})-\lambda_1({\mathbf{H}_\varepsilon})\sim C_2\,\varepsilon^2.
\]
That is, both the variation of inradius and that of $\lambda_1$ are of order $\varepsilon^2$. As one may expect, the two variations compete: the inradius {\it increases}, while $\lambda_1$ {\it decreases}. Thus, in order to decide whether the product $r_{\mathbf{H}_\varepsilon}^2 \,\lambda_1(\mathbf{H}_\varepsilon)$ decreased or not, a very precise asymptotics for $\lambda_1(\mathbf{H}_\varepsilon)$ would be needed. This seems out of reach.
On the other hand, one can observe that if we keep on adding thin tubes of the same width $\varepsilon$, the inradius is {\it insensitive} to the number $n_0$ of tubes attached. While, on the contrary, $\lambda_1$ is very much affected by these perturbations. Indeed, we can prove that each tube gives a fixed contribution of order $\varepsilon^2$, proportional to the square of the value of the normal derivative of $u_1$ at the ``junction point'' where the tube is attached. It should be noticed that, since $u_1$ has an exponential decay along the tubes, this contribution tends to be weaker and weaker, as the tube is attached further and further away from the origin.
\vskip.2cm\noindent
Incidentally, we notice that $\mathbf{H}$ can also be seen as a particular {\it curved waveguide}, i.e. it can be regarded as the tubular neighborhood (having width $1/2$) of the $C^{1,1}$ curve
\begin{equation}
\label{gamma}
\gamma(s)=\left\{\begin{array}{ll}
\left(s-1,\dfrac{1}{2}\right),& \mbox{ for } s\ge 1,\\
&\\
\left(-\dfrac{1}{2}\,\cos\left(\dfrac{\pi}{2}\,s\right),\dfrac{1}{2}\,\sin\left(\dfrac{\pi}{2}\,s\right)\right),& \mbox{ for } -1<s< 1,\\
&\\
\left(-s-1,-\dfrac{1}{2}\right),& \mbox{ for } s\le -1,\\
\end{array}
\right.
\end{equation}
see Figure \ref{fig:intorno}.
The systematic study of the spectral theory of curved waveguides has been initiated in the landmark paper \cite{ES} by Exner and \v{S}eba. In \cite[Example 4.3]{ES} this specific example is called ``bookcover''. Without any attempt of completeness, we refer to \cite{AE, EK, GJ} and \cite{KrKr} for some thorough studies on the spectral properties of these sets.
\begin{figure}
\includegraphics[scale=.2]{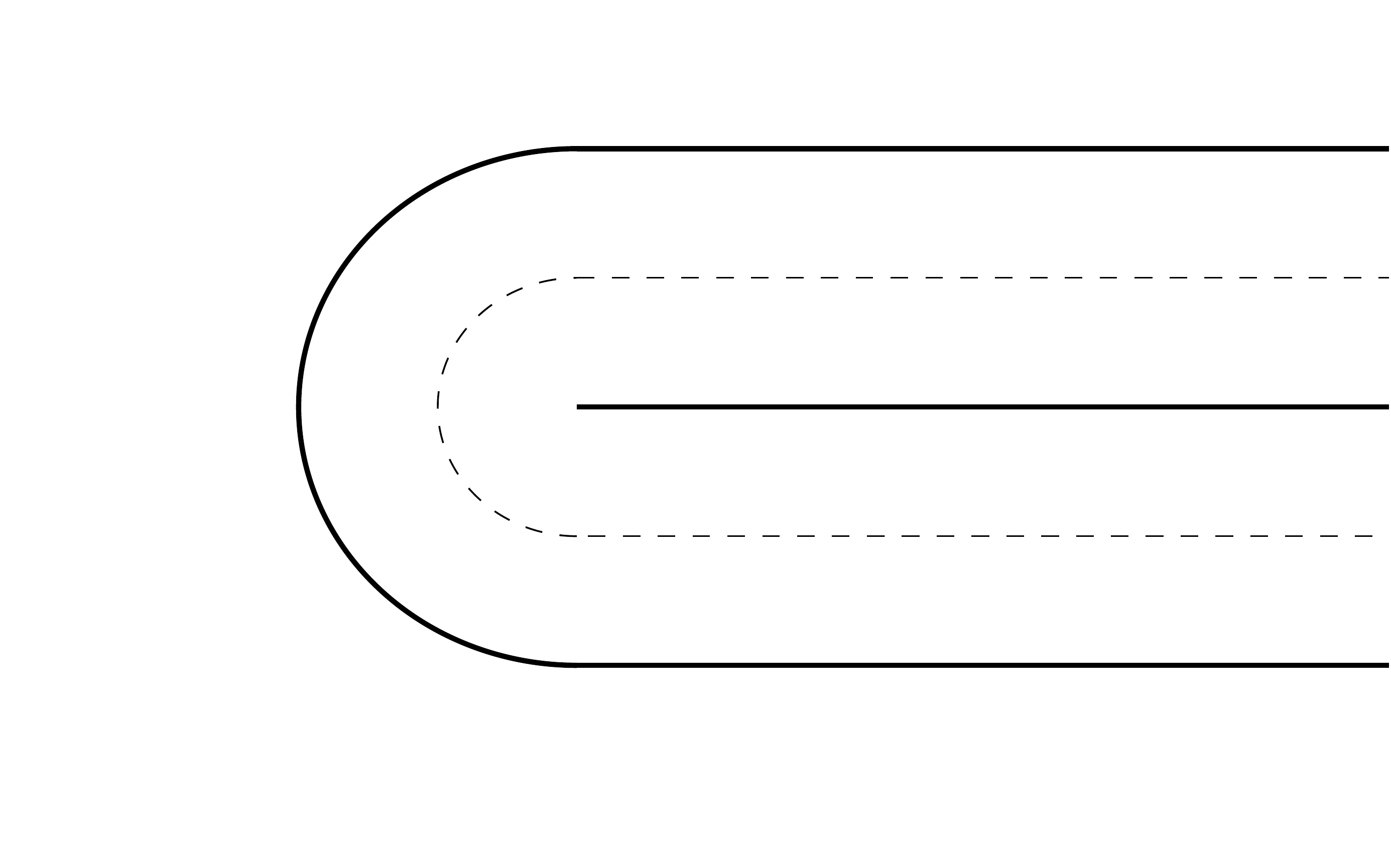}
\caption{In dashed line, the image of the curve $\gamma$ defined by \eqref{gamma}. By taking its tubular neighborhood of width $1/2$, we get the set $\mathbf{H}$.}
\label{fig:intorno}
\end{figure}
\par

\subsection{Plan of the paper} In Section \ref{sec:2} we recall some technical facts, which will be useful along the paper. Then in Section \ref{sec:3} we show that if $\lambda_1(\Omega)$ is below the critical threshold $\mathcal{E}(\Omega)$, it admits a first eigenfunction. This section contains also some examples of sets to which our existence result applies. Section \ref{sec:4} generalizes the existence result to higher eigenvalues. In the subsequent Section \ref{sec:5} we proceed to prove some properties of the eigenfunctions: the main result here is the exponential decay to $0$ at infinity.  
To complete the analysis, we discuss in Section \ref{sec:6} the loss of compactness along the tubes, i.e. we prove that every $\lambda\ge \mathcal{E}(\Omega)$ admits a constrained Palais-Smale sequence which ``disappears'' along one of the tubes $\mathcal{C}_i$. Finally, in Section \ref{sec:7} we consider Hersch's pipe and disprove its optimality for the problem of determining the sharp Makai-Hayman inequality. The paper is complemented by an appendix, which contains some technical facts taken from \cite{AO2023}.

\begin{ack}
We owe the knowledge of reference \cite{Le} to the kind courtesy of Giorgio Talenti, we wish to thank him. Part of this research has been done during a visit of R.O. to the University of Ferrara in March 2023, as well as during the conference ``{\it NonPUB23 -- Nonlocal and Nonlinear Partial Differential Equations at the University of Bologna}\,'', held in Bologna in June 2023. Organizers and hosting institutions are gratefully acknowledged.
\par
R.O. has been financially supported by the project {\it ERC VAREG -- Variational approach to the
regularity of the free boundaries} (grant agreement No. 853404) and by the INdAM-GNAMPA
Project 2022 \texttt{CUP\_E55F22000270001}.
\par
F.B. and L.B. have been financially supported by the {\it Fondo di Ateneo per la Ricerca} {\sc FAR 2020} and {\sc FAR 2021} of the University of Ferrara. 
\par
F.B. and R.O. are members of the Gruppo Nazionale per l'Analisi Matematica, la Probabilit\`a
e le loro Applicazioni (GNAMPA) of the Istituto Nazionale di Alta Matematica (INdAM). They are both supported by the INdAM-GNAMPA Project 2023 \texttt{CUP\_E53C22001930001}. 
\end{ack}

\section{Preliminaries}
\label{sec:2}

The following technical result is quite classical, it will be useful in the paper. We enclose a proof, for completeness.
\begin{lemma}
\label{lemma:sottosoluzione}
Let $\Omega\subseteq\mathbb{R}^N$ be an open set. Let $U\in W^{1,2}_0(\Omega)$ be such that
\begin{equation}
\label{equazione1}
\int_\Omega \langle \nabla U,\nabla \varphi\rangle\,dx=\lambda\,\int_\Omega U\,\varphi\,dx,\qquad \mbox{ for every } \varphi\in W^{1,2}_0(\Omega),
\end{equation}
for some $\lambda\geq 0$. Then
\begin{equation}
\label{equazione2}
\int_{\Omega} \langle \nabla |U|,\nabla \varphi\rangle\,dx\le \lambda\,\int_{\Omega} |U|\,\varphi\,dx,\qquad \mbox{ for every } \varphi\in W^{1,2}_0(\Omega) \ \mbox{ such that } \varphi\ge 0.
\end{equation}
\end{lemma}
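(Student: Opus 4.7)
The plan is to test the linear equation \eqref{equazione1} against a smooth approximation of $\mathrm{sgn}(U)\,\varphi$ and then pass to the limit. By the density of $C^\infty_0(\Omega)$ in $W^{1,2}_0(\Omega)$ and the continuity of both sides of \eqref{equazione2} with respect to the $W^{1,2}_0(\Omega)$ topology, it is enough to establish the inequality for a fixed nonnegative $\varphi\in C^\infty_0(\Omega)$.

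For every $\varepsilon>0$, I would consider the smooth and bounded function
\[
F_\varepsilon(t)=\frac{t}{\sqrt{t^2+\varepsilon^2}},\qquad t\in\mathbb{R},
\]
which approximates $\mathrm{sgn}(t)$ pointwise. Since $|F_\varepsilon|\leq 1$ and $F_\varepsilon'$ is bounded, the chain rule for Sobolev functions guarantees that $F_\varepsilon(U)\in W^{1,2}(\Omega)\cap L^\infty(\Omega)$, with $\nabla F_\varepsilon(U)=\varepsilon^2\,(U^2+\varepsilon^2)^{-3/2}\,\nabla U$. Consequently $\phi_\varepsilon:=F_\varepsilon(U)\,\varphi$ belongs to $W^{1,2}_0(\Omega)$, its support being contained in $\mathrm{supp}\,\varphi$. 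Using $\phi_\varepsilon$ as test function in \eqref{equazione1} and expanding $\nabla\phi_\varepsilon$ through the product rule yields
\[
\int_\Omega \frac{\varepsilon^2\,|\nabla U|^2}{(U^2+\varepsilon^2)^{3/2}}\,\varphi\,dx+\int_\Omega \frac{U}{\sqrt{U^2+\varepsilon^2}}\,\langle\nabla U,\nabla\varphi\rangle\,dx=\lambda\int_\Omega \frac{U^2}{\sqrt{U^2+\varepsilon^2}}\,\varphi\,dx.
\]
Since $\varphi\ge 0$, the first integrand is nonnegative; discarding it immediately produces the inequality
\[
\int_\Omega \frac{U}{\sqrt{U^2+\varepsilon^2}}\,\langle\nabla U,\nabla\varphi\rangle\,dx\le \lambda\int_\Omega \frac{U^2}{\sqrt{U^2+\varepsilon^2}}\,\varphi\,dx.
\]

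Finally, I would send $\varepsilon\to 0^+$ by dominated convergence. Pointwise almost everywhere one has
\[
\frac{U}{\sqrt{U^2+\varepsilon^2}}\longrightarrow \mathrm{sgn}(U),\qquad \frac{U^2}{\sqrt{U^2+\varepsilon^2}}\longrightarrow |U|,
\]
with uniform bounds $|U|/\sqrt{U^2+\varepsilon^2}\le 1$ and $U^2/\sqrt{U^2+\varepsilon^2}\le |U|$; thus the integrands are dominated, respectively, by $|\nabla U|\,|\nabla\varphi|\in L^1(\Omega)$ and by $|U|\,\varphi\in L^1(\Omega)$. Recalling the classical identity $\nabla|U|=\mathrm{sgn}(U)\,\nabla U$ (with the convention $\nabla U=0$ a.e.\ on $\{U=0\}$), the limit yields exactly \eqref{equazione2}. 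The only mild subtlety is the correct use of the chain and product rules in $W^{1,2}_0(\Omega)$ and the verification that $\phi_\varepsilon$ is an admissible test function, but both points are standard and rest only on the compactness of $\mathrm{supp}\,\varphi$, with no use of any structural property of $\Omega$.
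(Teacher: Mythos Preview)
Your proof is correct and follows essentially the same approach as the paper: the paper uses the smooth convex approximation $f_\varepsilon(t)=\sqrt{t^2+\varepsilon^2}-\varepsilon$ of $|t|$ and tests with $\eta\,f_\varepsilon'(U)$, which is exactly your $\varphi\,F_\varepsilon(U)$ since $F_\varepsilon=f_\varepsilon'$. The discarded nonnegative term, the bound $U^2/\sqrt{U^2+\varepsilon^2}\le |U|$ (equivalently $t\,f_\varepsilon'(t)\le |t|$), and the dominated-convergence passage to the limit are identical in both arguments.
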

\begin{proof}
For every $\varepsilon>0$, we consider the $C^2$ convex function
\[
f_\varepsilon(t)=(\varepsilon^2+t^2)^\frac{1}{2}-\varepsilon,\qquad \mbox{ for } t\in\mathbb{R}.
\]
By the Chain Rule, we have $f_\varepsilon(U)\in W^{1,2}_0(\Omega)$ and $\nabla f_\varepsilon(U)=f'_\varepsilon(U)\,\nabla U$. We take a non-negative $\eta\in C^\infty_0(\Omega)$ and use the test function
\[
\varphi=\eta\,f_\varepsilon'(U),
\]
in \eqref{equazione1}. This gives
\[
\int_\Omega \langle \nabla f_\varepsilon\circ U,\nabla\eta\rangle\,dx+\int_\Omega f_\varepsilon''(U)\,|\nabla U|^2\,\eta\,dx= \lambda\,\int_\Omega U\,\eta\,f'_\varepsilon(U)\,dx.
\]
By using the convexity of $f_\varepsilon$ and the fact that
\[
t\,f_\varepsilon'(t)\le |t|,\qquad \mbox{ for every } t\in\mathbb{R},
\]
we obtain
\begin{equation}
\label{equazione3}
\int_\Omega \langle \nabla f_\varepsilon\circ U,\nabla\eta\rangle\,dx\le \lambda\,\int_\Omega |U|\,\eta\,dx.
\end{equation}
By using the Dominated Convergence Theorem and the form of $f_\varepsilon$, it is easily seen that
\[
\lim_{\varepsilon\to 0^+} \big\| f_\varepsilon\circ U-|U|\big\|_{W^{1,2}(\Omega)}=0.
\] 
Thus we can pass to the limit in \eqref{equazione3}. Finally, by a density argument, we can enlarge the class of test functions to $\eta\in W^{1,2}_0(\Omega)$, with $\eta\ge 0$, and conclude the proof.
\end{proof}
We recall that for a general open set $\Omega\subseteq\mathbb{R}^N$, we have defined for every $j\in\mathbb{N}\setminus\{0\}$
\begin{equation*}
	\lambda_j(\Omega):=\inf\left\{ \max_{\varphi\in F\cap \mathcal{S}_2(\Omega)} \int_\Omega|\nabla \varphi|^2\,dx\,:\, F \subseteq W^{1,2}_0(\Omega) \mbox{ subspace with } \dim F=j \right\},
\end{equation*}
where
\[
\mathcal{S}_2(\Omega)=\Big\{\varphi\in W^{1,2}_0(\Omega)\, :\, \|\varphi\|_{L^2(\Omega)}=1\Big\}.
\]
Thanks to its definition, it is easily seen that for every pair of open sets $\Omega',\Omega\subseteq\mathbb{R}^N$, we have
\begin{equation}
\label{monotoni}
\Omega'\subseteq\Omega\qquad \Longrightarrow \qquad \lambda_j(\Omega)\le \lambda_j(\Omega'),\ \mbox{ for every } j\in\mathbb{N}\setminus\{0\}.
\end{equation}
The next two results are apparently well-known, but we draw the reader's attention to the fact that we do not take any assumption on the open set. In particular, $\lambda_j$ is not necessarily an eigenvalue.
\begin{lemma}\label{lm:limlamk}
Let $\Omega\subseteq\mathbb{R}^N$ be an open set and let $\{\Omega_n\}_{n\in\mathbb{N}}$ be a sequence of open sets such that
\[
\Omega_n\subseteq \Omega_{n+1}\subseteq \Omega\qquad \mbox{ and }\qquad \bigcup_{n\in\mathbb{N}} \Omega_n=\Omega.
\]
Then we have
\[
\lim_{n\to\infty} \lambda_j(\Omega_n)=\lambda_j(\Omega),\qquad \mbox{ for every } j\in\mathbb{N}\setminus\{0\}.
\]
\end{lemma}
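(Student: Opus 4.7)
The plan is to prove the two inequalities separately. The easy direction is
\[
\lambda_j(\Omega)\le \lim_{n\to\infty} \lambda_j(\Omega_n):
\]
by the monotonicity property \eqref{monotoni} applied to $\Omega_n\subseteq\Omega$, we have $\lambda_j(\Omega)\le \lambda_j(\Omega_n)$, and the sequence $\{\lambda_j(\Omega_n)\}_n$ is non-increasing since $\Omega_n\subseteq \Omega_{n+1}$, so the limit exists in $[0,+\infty]$ and dominates $\lambda_j(\Omega)$.

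For the reverse inequality, we may assume $\lambda_j(\Omega)<+\infty$. Fix $\delta>0$. By the variational characterization, there exists a $j$-dimensional subspace $F\subseteq W^{1,2}_0(\Omega)$, with some basis $\{v_1,\dots,v_j\}$, such that
\[
\max_{u\in F\cap\mathcal{S}_2(\Omega)} \int_\Omega|\nabla u|^2\,dx<\lambda_j(\Omega)+\delta.
\]
Since $W^{1,2}_0(\Omega)$ is by definition the closure of $C^\infty_0(\Omega)$, for each $i=1,\dots,j$ we can choose $\varphi_i^{(\ell)}\in C^\infty_0(\Omega)$ with $\varphi_i^{(\ell)}\to v_i$ in $W^{1,2}(\Omega)$ as $\ell\to\infty$.

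The core of the argument is then the following finite-dimensional perturbation step: the max of the Rayleigh quotient over a $j$-dimensional subspace coincides with the largest root of the generalized eigenvalue problem $A\,a=\mu\,B\,a$, where the matrices $A_{ih}=\int_\Omega\langle\nabla v_i,\nabla v_h\rangle\,dx$ and $B_{ih}=\int_\Omega v_i\,v_h\,dx$ are respectively Gram matrices of the $v_i$'s in the Dirichlet and in the $L^2$ inner product. When we replace the $v_i$'s with the $\varphi_i^{(\ell)}$'s, both matrices depend continuously on the entries, and since $B$ is positive definite (as $F$ is genuinely $j$-dimensional), by continuity of eigenvalues of symmetric pencils, for $\ell$ sufficiently large the perturbed matrix $B^{(\ell)}$ is still positive definite (so the $\varphi_i^{(\ell)}$'s are linearly independent, hence span a $j$-dimensional subspace $F^{(\ell)}$) and
\[
\max_{u\in F^{(\ell)}\cap\mathcal{S}_2(\Omega)} \int_\Omega|\nabla u|^2\,dx<\lambda_j(\Omega)+2\delta.
\]

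To conclude, fix such a large $\ell$. Since each $\varphi_i^{(\ell)}$ has compact support contained in $\Omega=\bigcup_n\Omega_n$, by compactness of the supports and monotonicity of the $\Omega_n$'s there exists an index $n_\ell$ such that $\mathrm{supp}(\varphi_i^{(\ell)})\subseteq\Omega_n$ for all $i=1,\dots,j$ and all $n\ge n_\ell$. Therefore $F^{(\ell)}\subseteq W^{1,2}_0(\Omega_n)$ is an admissible competitor in the min-max defining $\lambda_j(\Omega_n)$, which gives
\[
\lambda_j(\Omega_n)\le \max_{u\in F^{(\ell)}\cap\mathcal{S}_2(\Omega_n)} \int_{\Omega_n}|\nabla u|^2\,dx<\lambda_j(\Omega)+2\delta,
\]
for every $n\ge n_\ell$. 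Letting $n\to\infty$ and then $\delta\to 0^+$ yields $\lim_{n\to\infty}\lambda_j(\Omega_n)\le\lambda_j(\Omega)$, which together with the first part concludes the proof. The only delicate point is the perturbation step, but since everything happens inside a finite-dimensional subspace, it reduces to an elementary continuity property of eigenvalues of symmetric matrices.
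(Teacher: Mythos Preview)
Your proof is correct and follows essentially the same approach as the paper: monotonicity for one inequality, then approximation of a near-optimal $j$-dimensional subspace by $C^\infty_0$ functions whose supports eventually fit inside $\Omega_n$. The only cosmetic difference is that you fix $\delta>0$ and a near-optimal $F$ from the start (and spell out the perturbation step via Gram matrices), whereas the paper takes an arbitrary $F$ and passes to the infimum at the end; both arguments are equivalent.
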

\begin{proof}
By \eqref{monotoni}, we already know that the limit of $\lambda_j(\Omega)$ exists and is such that
\[
\lim_{n\to\infty} \lambda_j(\Omega_n)\ge \lambda_j(\Omega).
\]
In order to prove the reverse inequality, we take $F\subseteq W^{1,2}_0(\Omega)$ a vector subspace with dimension $j$. By definition, this means that there exists $j$ linearly independent functions $u_1,\dots,u_j\in W^{1,2}_0(\Omega)$ such that
\[
F=\left\{\sum_{i=1}^j \alpha_i\,u_i\, :\, \alpha_1,\dots,\alpha_j\in\mathbb{R} \right\}.
\]
For every $u_i$ with $i=1,\dots,j$, there exists a sequence $\{u_i^m\}_{m\in\mathbb{N}}\subseteq C^\infty_0(\Omega)$ such that
\[
\lim_{m\to\infty} \|u_i^m-u_i\|_{W^{1,2}(\Omega)}=0,\qquad \mbox{ for every } i=1,\dots,j.
\]
We then define
\[
F_m=\left\{\sum_{i=1}^j \alpha_i\,u^m_i\, :\, \alpha_1,\dots,\alpha_j\in\mathbb{R} \right\},
\]
and observe that this is a $j-$dimensional vector subspace of $C^\infty_0(\Omega)$, for $m$ large enough. Since the sequence $\{\Omega_n\}_{n\in\mathbb{N}}$ is exhausting $\Omega$, we have that $F_m$ is actually a $j-$dimensional vector subspace of $C^\infty_0(\Omega_n)$, for $n$ large enough (depending on $m$). Thus we get
\[
\lim_{n\to\infty}\lambda_j(\Omega_n)\le \max_{u \in F_m\cap \mathcal{S}_2(\Omega)} \int_\Omega |\nabla u|^2\,dx.
\]
By using the construction of $F_m$, it is easily seen that
\[
\lim_{m\to\infty}\max_{u \in F_m\cap \mathcal{S}_2(\Omega)} \int_\Omega |\nabla u|^2\,dx=\max_{u \in F\cap \mathcal{S}_2(\Omega)} \int_\Omega |\nabla u|^2\,dx.
\]
Finally, by arbitrariness of $F$, the last two equations in display and the definition of $\lambda_j(\Omega)$ give the desired conclusion.
\end{proof}
\begin{lemma}
\label{lemma:crescono}
Let $\Omega\subseteq\mathbb{R}^N$ be an open set, then we have
\[
\lambda_j(\Omega)\le \lambda_{j+1}(\Omega),\qquad \mbox{ for every } j\in\mathbb{N}\setminus\{0\}.
\]
\end{lemma}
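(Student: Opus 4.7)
The plan is to exploit the min--max definition \eqref{lambdak} directly, using the elementary fact that restricting to a smaller subspace can only reduce the maximum of a positive functional on the corresponding sphere. No analytic input beyond this is required.

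More concretely, let $F \subseteq W^{1,2}_0(\Omega)$ be any subspace of dimension $j+1$. Since $j+1 > j \ge 1$, I can pick an arbitrary $j$-dimensional subspace $F' \subseteq F$ (for instance, by removing one element from a basis of $F$). Then clearly $F' \cap \mathcal{S}_2(\Omega) \subseteq F \cap \mathcal{S}_2(\Omega)$, so
\[
\max_{\varphi \in F' \cap \mathcal{S}_2(\Omega)} \int_\Omega |\nabla \varphi|^2\,dx \;\le\; \max_{\varphi \in F \cap \mathcal{S}_2(\Omega)} \int_\Omega |\nabla \varphi|^2\,dx.
\]
By the definition of $\lambda_j(\Omega)$ as an infimum over $j$-dimensional subspaces, the left-hand side is bounded below by $\lambda_j(\Omega)$, hence
\[
\lambda_j(\Omega) \;\le\; \max_{\varphi \in F \cap \mathcal{S}_2(\Omega)} \int_\Omega |\nabla \varphi|^2\,dx.
\]
Finally, taking the infimum over all $(j+1)$-dimensional subspaces $F \subseteq W^{1,2}_0(\Omega)$ on the right-hand side yields $\lambda_j(\Omega) \le \lambda_{j+1}(\Omega)$, which is the desired conclusion.

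There is essentially no obstacle: the only point that deserves a brief mention is that every finite-dimensional subspace $F \subseteq W^{1,2}_0(\Omega)$ of positive dimension meets $\mathcal{S}_2(\Omega)$, because any nonzero element of $W^{1,2}_0(\Omega)$ has strictly positive $L^2$ norm and can be normalized. This ensures that the maxima above are taken over nonempty (in fact compact) sets, so the argument is well posed.
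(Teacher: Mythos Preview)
Your proof is correct and follows essentially the same approach as the paper: both choose a $j$-dimensional subspace inside an arbitrary $(j+1)$-dimensional subspace $F$, bound $\lambda_j(\Omega)$ by the maximum over the smaller sphere, compare with the maximum over $F\cap\mathcal{S}_2(\Omega)$, and then take the infimum over $F$.
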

\begin{proof}
		Let $F\subseteq W^{1,2}_0(\Omega)$ be a $(j+1)-$dimensional vector subspace and let $\{u_1,\dots,u_j,u_{j+1}\}$ be a basis. We define $\widetilde{F}$ the $j-$dimensional subspace spanned by $\{u_1,\dots,u_j\}$. 
		Then, $\widetilde{F}\subseteq F$ and we have that
		\[
	\lambda_j(\Omega)\le		\max_{u\in \widetilde{F}\cap\mathcal{S}_2(\Omega)}\int_\Omega|\nabla u|^2\,dx\leq \max_{u\in F\cap\mathcal{S}_2(\Omega)}\int_\Omega|\nabla u|^2\,dx.
		\]
By taking the infimum with respect to $(j+1)-$dimensional subspaces $F$, we conclude.
\end{proof}

\section{The first eigenvalue}
\label{sec:3}

\subsection{Set-up}
\label{sec:3.1}
We will use the same notations of Assumptions \ref{ass:Omega}.
For any $r>0$, we set 
\[
Q_r^i:=\textbf{R}_i\big((-r,r)^N\big)+\mathbf{t}_i,\qquad i=1,\dots,k,
\]
and
\[
\Omega^i_r:=\Omega_0\cup (Q_r^i\cap \mathcal{C}_i),\qquad \mbox{ for } i=1,\dots,k.
\]
Accordingly, we also set
\begin{equation}
\label{Omegar}
\Omega_r=\bigcup_{i=1}^k\Omega_r^i.
\end{equation}
We then take $r_0=r_0(\Omega)>0$ to be large enough such that $\Omega\setminus\Omega_r$ is a disjoint union of $k$ cylindrical sets, for any $r\geq r_0$. More precisely, we have
\begin{equation}\label{eq:r_0}
	\Omega\setminus\Omega_r=\bigcup_{i=1}^k (\mathcal{C}_i\setminus Q_r^i),\qquad \mbox{ for }r\ge r_0.
\end{equation}
The existence of such $r_0$ is guaranteed by the construction of $\Omega$, see Assumptions \ref{ass:Omega}. 
\par
This crucial property entails that 
a Poincar\'e inequality holds for functions of $W^{1,2}_0(\Omega)$, even when restricted to $\mathcal{C}_i\setminus Q^i_r$. Namely, we have the following

\begin{lemma}[Poincar\'e inequality at infinity]\label{lemma:poinc}
Let $\Omega\subseteq\mathbb{R}^N$ be an open set satistying Assumptions \ref{ass:Omega}.
With the notations above, we have	
	\begin{equation*}
		\lambda_1(E_i)\,\int_{\mathcal{C}_i\setminus Q^i_r}|\varphi|^2\,dx\leq \int_{\mathcal{C}_i\setminus Q^i_r}|\nabla \varphi|^2\,dx,
	\end{equation*}
for every $i=1,\dots,k$, $r\geq r_0$ and every $\varphi\in W^{1,2}_0(\Omega)$.
\end{lemma}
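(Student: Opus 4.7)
The plan is to reduce the inequality, by means of the isometry $\mathbf{R}_i$ and translation $\mathbf{t}_i$, to the case of the standard cylinder $E_i\times(0,+\infty)$, and then to prove it by a slice-wise application of the Poincar\'e inequality on $E_i$ (for which the sharp constant is precisely $\lambda_1(E_i)$, by definition).

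First, I would set up local coordinates along the $i$-th tube. The map $x\mapsto \mathbf{R}_i^{-1}(x-\mathbf{t}_i)$ sends $\mathcal{C}_i$ onto $E_i\times(0,+\infty)$ and $Q_r^i$ onto the cube $(-r,r)^N$. By possibly enlarging $r_0$ once and for all, I may assume that $E_i\subseteq(-r,r)^{N-1}$ for every $r\geq r_0$ and every $i=1,\dots,k$ (this is fine because each $E_i$ is bounded). With this choice, the set $\mathcal{C}_i\setminus Q_r^i$ becomes, in local coordinates and up to a set of zero Lebesgue measure, the half-cylinder $E_i\times(r,+\infty)$.

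Next, I would exploit that $\varphi\in W^{1,2}_0(\Omega)$ is, by definition, the $W^{1,2}$-limit of a sequence $\{\varphi_n\}_{n\in\mathbb{N}}\subseteq C^\infty_0(\Omega)$. Thanks to \eqref{eq:r_0} (which is the whole point of the definition of $r_0$), for $r\geq r_0$ the portion of $\Omega$ that lies over $E_i\times(r,+\infty)$ (in the local coordinates) is exactly $E_i\times(r,+\infty)$ itself; in particular its lateral boundary $\partial E_i\times(r,+\infty)$ is contained in $\partial\Omega$ and so each $\varphi_n$ vanishes identically there. Hence, for almost every $t>r$, the slice $\varphi_n(\cdot,t)$ belongs to $C^\infty_0(E_i)$, and the definition of $\lambda_1(E_i)$ yields
\[
\lambda_1(E_i)\int_{E_i}|\varphi_n(x',t)|^2\,dx'\leq \int_{E_i}|\nabla_{x'}\varphi_n(x',t)|^2\,dx'\leq \int_{E_i}|\nabla\varphi_n(x',t)|^2\,dx'.
\]
Integrating in $t\in(r,+\infty)$ via Fubini's theorem gives the desired inequality for each $\varphi_n$ (in local coordinates), and passing to the limit $n\to\infty$ in $W^{1,2}$, together with the invariance of Dirichlet and $L^2$ integrals under isometries, yields the statement for $\varphi$.

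I do not expect any serious obstacle: the only delicate point is to check that the lateral part of $\partial(\mathcal{C}_i\setminus Q_r^i)$ is indeed contained in $\partial\Omega$ for $r\geq r_0$, which is ensured by condition (A3) and the definition of $r_0$ in \eqref{eq:r_0}, so that no test function $\varphi_n\in C^\infty_0(\Omega)$ can ``leak'' across it.
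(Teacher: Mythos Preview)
Your proposal is correct and follows essentially the same approach as the paper's proof: reduce to the straight half-cylinder via the isometry, apply the $(N-1)$-dimensional Poincar\'e inequality on each cross-section $E_i$ for functions in $C^\infty_0(\Omega)$, integrate in the axial variable, and conclude by density. The only difference is that you make explicit a couple of points (the inclusion $E_i\subseteq(-r,r)^{N-1}$ and the fact that the lateral boundary lies in $\partial\Omega$) that the paper leaves implicit in the definition of $r_0$.
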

\begin{proof}
We will use the notation $x=(x',x_N)\in\mathbb{R}^{N-1}\times \mathbb{R}$. By a simple change of variable, we have that, for any $\varphi\in C^\infty_0(\Omega)$ and any $i=1,\dots,k$, there holds
	\[
	\int_{\mathcal{C}_i\setminus Q_r^i}|\varphi|^2\,dx=\int_r^{+\infty}\int_{E_i}|\varphi(\textbf{R}_i(x',x_N)+\mathbf{t}_i)|^2\,dx'\,dx_N.
	\]
For every fixed $x_N$, we now use the $(N-1)-$dimensional Poincar\'e inequality for the function 
\[
x'\mapsto \varphi(\textbf{R}_i(x',x_N)+\mathbf{t}_i),
\] 
which is compactly supported in $E_i$. This yields 
\[
\int_{\mathcal{C}_i\setminus Q^i_r}|\varphi|^2\,dx\leq \frac{1}{\lambda_1(E_i)}\,\int_r^{+\infty}\int_{E_i}|\nabla'\varphi(\textbf{R}_i(x',x_N)+\mathbf{t}_i)|^2\,dx'\,dx_N,
\]
where we denoted by $\nabla'$ the gradient with respect to $x'$. By observing that 
\[
\int_r^{+\infty}\int_{E_i}|\nabla'\varphi(\textbf{R}_i(x',x_N)+\mathbf{t}_i)|^2\,dx'\,dx_N\le \int_{\mathcal{C}_{i}\setminus Q_r^i}|\nabla \varphi|^2\,dx,
\]
we now easily get the desired Poincar\'e inequality, for functions in $C^\infty_0(\Omega)$. Finally, by density of $C^\infty_0(\Omega)$ in $W^{1,2}_0(\Omega)$, we conclude the proof.
\end{proof}

\begin{definition}[Tubular cut-off functions]\label{def:cutoff}
	We denote by $\eta:\mathbb{R}\to [0,+\infty)$ a $C^\infty$ function such that 
	\[
	0\leq \eta\leq 1,\qquad |\eta'|\leq 2,\qquad \eta(t)=\begin{cases}
		0, &\text{for }t\leq 0, \\
		1, &\text{for }t\geq 1.
	\end{cases}
\]
For every $i=1,\dots,k$, we set 
\[
\eta_{E_i}(x',x_N)=\left\{\begin{array}{rl}
\eta(x_N),& \mbox{ if } x'\in \overline{E_i},\\
0,& \mbox{ otherwise},
\end{array}
\right.
\] 
and for a pair $0<r<R$, we also define
\[
\eta_{r,R}^{(i)}(x)=\eta_{E_i}\left(\frac{\mathbf{R}_i^{-1}(x-\mathbf{t}_i)-r\,\mathbf{e}_N}{R-r}\right).
\]
It is crucial to observe that, by construction, we have 
\[
\eta^{(i)}_{r,R}(x)\equiv 0,\ \mbox{ in } \mathcal{C}_i\cap Q_r^i,\qquad \eta^{(i)}_{r,R}(x)\equiv 1,\  \mbox{ in } \mathcal{C}_i\setminus Q^i_{R},\qquad \left|\nabla \eta^{(i)}_{r,R}(x)\right|\le \frac{2}{R-r},\ \mbox{ in } \mathcal{C}_i.
\]
Here $\mathbf{R}_i$ and $\mathbf{t}_i$ are still as in Assumptions \ref{ass:Omega}.
In particular, this acts as a ``cut-off at infinity'' along the cylindrical set $\mathcal{C}_i$.  In the particular case $R=r+1$, we will simply use the notation
\begin{equation*}
	\eta_r^{(i)}(x):=\eta_{E_i}\big(\mathbf{R}_i^{-1}(x-\mathbf{t}_i)-r\,\mathbf{e}_N\big).
\end{equation*}
\end{definition}

\subsection{Existence of a first eigenfunction}

\begin{theorem}
\label{teo:first_eigen}
Let $\Omega\subseteq\mathbb{R}^N$ be an open set satisfying Assumptions \ref{ass:Omega}. If 
	\begin{equation}
	\label{energybelow}
		\lambda_1(\Omega)<\mathcal{E}(\Omega):=\min\Big\{\lambda_1(E_1),\dots,\lambda_1(E_k)\Big\}.
	\end{equation}
then $\lambda_1(\Omega)$ is actually a minimum, i.e. there exists a first eigenfunction $u_1\in W^{1,2}_0(\Omega)$ and $u_1\geq 0$. Moreover, if $\Omega$ is connected, then every other first eigenfunction is proportional to $u_1$.
\end{theorem}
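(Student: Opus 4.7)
The plan is to construct $u_1$ as the weak $W^{1,2}$-limit of first eigenfunctions on the bounded exhaustion $\Omega_n:=\Omega_{r_n}$ from \eqref{Omegar}, with $r_n\to+\infty$ and $r_n\geq r_0$, exploiting the spectral gap \eqref{energybelow} to prevent mass from escaping along the tubes.

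First I would use the compactness of $W^{1,2}_0(\Omega_n)\hookrightarrow L^2(\Omega_n)$ to obtain non-negative first eigenfunctions $u_1^n\in \mathcal{S}_2(\Omega_n)$ with $-\Delta u_1^n = \lambda_1(\Omega_n)\, u_1^n$ and $\int|\nabla u_1^n|^2 =\lambda_1(\Omega_n)$. Extending by zero, Lemma \ref{lm:limlamk} gives $\lambda_1(\Omega_n)\to \lambda_1(\Omega)$, so $\{u_1^n\}$ is bounded in $W^{1,2}_0(\Omega)$; up to a subsequence, $u_1^n\rightharpoonup u_1\ge 0$ weakly in $W^{1,2}_0(\Omega)$ and strongly in $L^2_{\mathrm{loc}}(\Omega)$.

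The hard part, and the step that really uses \eqref{energybelow}, is a uniform-in-$n$ tightness estimate showing that mass does not escape along any tube. I would test the equation for $u_1^n$ against $(\eta_{r,R}^{(i)})^2\,u_1^n$ with $r_0\leq r<R$ and $r_n\geq R$, expand $|\nabla(\eta_{r,R}^{(i)}u_1^n)|^2$, and apply the Poincar\'e inequality of Lemma \ref{lemma:poinc} to $\eta_{r,R}^{(i)}u_1^n$ (supported in $\mathcal{C}_i\setminus Q_r^i$), obtaining
\[
\bigl(\lambda_1(E_i)-\lambda_1(\Omega_n)\bigr)\int_{\mathcal{C}_i\setminus Q_R^i}(u_1^n)^2\,dx \leq \int_{\mathcal{C}_i}(u_1^n)^2\,|\nabla \eta_{r,R}^{(i)}|^2\,dx \leq \frac{4}{(R-r)^2}.
\]
Since $\lambda_1(\Omega_n)\to\lambda_1(\Omega)<\lambda_1(E_i)$, the prefactor is bounded below by some $\delta>0$ uniformly in $n$. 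Fixing $r=r_0$ and summing over $i=1,\dots,k$ yields $\|u_1^n\|_{L^2(\Omega\setminus\Omega_R)}\to 0$ uniformly in $n$ as $R\to +\infty$.

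Tightness combined with strong $L^2_{\mathrm{loc}}$-convergence, by a Riesz--Fr\'echet--Kolmogorov type argument, upgrades to strong $L^2(\Omega)$-convergence, so $\|u_1\|_{L^2(\Omega)}=1$ and $u_1\in\mathcal{S}_2(\Omega)$. Lower semicontinuity of the Dirichlet integral together with $\lambda_1(\Omega_n)\to\lambda_1(\Omega)$ gives $\int|\nabla u_1|^2\le\lambda_1(\Omega)$, which by the variational definition of $\lambda_1(\Omega)$ must be an equality, so $u_1$ is a first eigenfunction with $u_1\ge 0$. For the uniqueness statement when $\Omega$ is connected, I would first use Lemma \ref{lemma:sottosoluzione} and the energy identity $\int|\nabla|v_1||^2=\int|\nabla v_1|^2=\lambda_1(\Omega)$ to reduce to non-negative eigenfunctions; from $-\Delta u_1=\lambda_1(\Omega)\,u_1\ge 0$ and the strong minimum principle applied on each connected component we get $u_1>0$ in $\Omega$; then the standard sliding argument applied to the non-negative eigenfunction $u_1-\alpha^\ast v_1$ with $\alpha^\ast=\inf_\Omega u_1/v_1$ forces $u_1-\alpha^\ast v_1\equiv 0$, i.e.\ proportionality.
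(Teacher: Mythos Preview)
Your proposal is correct and follows the same overall strategy as the paper: exhaust $\Omega$ by the bounded sets $\Omega_n$, take the first eigenfunctions $u_1^n$ as a minimizing sequence, and use the gap \eqref{energybelow} to obtain a uniform tightness estimate that upgrades weak to strong $L^2$ convergence via Riesz--Fr\'echet--Kolmogorov.

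The one genuine difference is in how you extract tightness. You use the clean identity $\int|\nabla(\eta u_1^n)|^2=\lambda_1(\Omega_n)\int\eta^2(u_1^n)^2+\int(u_1^n)^2|\nabla\eta|^2$ together with Lemma~\ref{lemma:poinc} applied to $\eta_{r,R}^{(i)}u_1^n$, which yields the polynomial bound $\|u_1^n\|_{L^2(\Omega\setminus\Omega_R)}^2\le C/(R-r_0)^2$; this is perfectly sufficient here and arguably more elegant. The paper instead applies Young's inequality with a carefully chosen parameter to obtain \eqref{scappotta}, and then iterates a hole-filling step to get the \emph{exponential} decay $\|u_1^n\|_{L^2(\Omega\setminus\Omega_R)}^2\le C\beta^{R}$ with $0<\beta<1$. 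The payoff of the paper's more elaborate route is that the same estimate is recycled verbatim in Theorem~\ref{teo:eigenproperties} to prove exponential decay of the eigenfunction itself; your simpler argument would have to be supplemented there. For the uniqueness clause the paper simply appeals to the classical fact, and your sketch via the strong minimum principle is the standard way to fill that in.
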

\begin{proof}
Let $n\geq r_0$. With the notation of \eqref{Omegar}, we consider $\Omega_n\subseteq\mathbb{R}^N$. Since the latter is an open {\it bounded} set, the embedding $W^{1,2}_0(\Omega_n)\hookrightarrow L^2(\Omega_n)$ is compact and the quantity 
	\begin{equation*}
		\lambda_1(\Omega_n)=\inf_{\varphi\in \mathcal{S}_2(\Omega_n)} \int_{\Omega_n} |\nabla \varphi|^2\,dx,
	\end{equation*}
admits a nonnegative minimizer $u_{1}^n\in \mathcal{S}_2(\Omega_n)$, as recalled in the Introduction. 
We also observe that 
\begin{equation}
\label{eq:first_eigen_1}
\lim_{n\to\infty}\lambda_1(\Omega_n)=\lambda_1(\Omega),
\end{equation}
thanks to Lemma \ref{lm:limlamk}. We will prove existence of a first eigenfunction for $\Omega$ by using the Direct Method in the Calculus of Variations: as a minimizing sequence, we will take $\{u_1^n\}_{n\in\mathbb{N}}$, with the previous notation. Indeed, in light of \eqref{eq:first_eigen_1}, this is a minimizing sequence.
\par
In what follows, each function $u_1^n$ is considered to be extended by $0$ outside $\Omega_n$. We first observe that this sequence is bounded in $W^{1,2}_0(\Omega)$.
Thus, there exists $u_1\in W^{1,2}_0(\Omega)$ such that $u_1^n$ converges weakly to $u_1$ in $W^{1,2}_0(\Omega)$, up to a subsequence. Observe that the limit function still belongs to $W^{1,2}_0(\Omega)$, because the latter is a weakly closed space. Moreover, by lower semicontiuity, we have
\[
\int_\Omega |\nabla u_1|^2\,dx\le \liminf_{n\to\infty} \int_\Omega |\nabla u_1^n|^2\,dx=\lambda_1(\Omega). 
\]
In order to conclude, it is sufficient to upgrade this convergence to the strong one in $L^2(\mathbb{R}^N)$, so to assure that $u_1\in\mathcal{S}_2(\Omega)$.
To this aim, we appeal to the classical Riesz-Fr\'echet-Kolmogorov Theorem. Since we work with a sequence of functions in $W^{1,2}_0(\Omega)$, which is bounded in the norm of $W^{1,2}(\mathbb{R}^N)$, 
we just need to verify that the sequence ``does not lose mass at infinity''. Namely, we need to prove that there exists $n_0\in\mathbb{N}$ such that, for every $\varepsilon>0$, there exists $R_\varepsilon>0$ such that
	\begin{equation}\label{eq:RFK}
		\int_{\mathbb{R}^N\setminus \Omega_{R_\varepsilon}}|u_1^n|^2\,dx<\varepsilon,\qquad\text{ for all }n\geq n_0.
	\end{equation}
By minimality, we have that	
	\begin{equation}\label{eq:first_eigen_2}
		\int_{\mathbb{R}^N} 
		\langle\nabla u_1^n,\nabla \varphi\rangle\,dx= \lambda_1(\Omega_n)\,\int_{\mathbb{R}^N}u_1^n\,\varphi\,dx,\qquad\mbox{ for every }\varphi\in W^{1,2}_0(\Omega_n).
	\end{equation}
	We now fix $i=1,\dots,k$ and test \eqref{eq:first_eigen_2} with $u_1^n\,(\eta_r^{(i)})^2\in W^{1,2}_0(\Omega_n)$, where $\eta^{(i)}_r$ is as\footnote{This is a feasible test function, since for every $\varphi\in W^{1,2}_0(\Omega_n)$ and $\eta\in C^1(\overline{\Omega_n})$, we have $\varphi\,\eta\in W^{1,2}_0(\Omega_n)$, as well.} in Definition \ref{def:cutoff} and $r\ge r_0$. This gives
	\begin{equation}\label{eq:first_eigen_3}
		\int_{\mathbb{R}^N}|\nabla u_1^n|^2\,\left|\eta_r^{(i)}\right|^2\,dx+2\,\int_{\mathbb{R}^N}u_1^n\,\eta^{(i)}_r\,\langle\nabla u_1^n,\nabla \eta_r^{(i)}\rangle\,dx=\lambda_1(\Omega_n)\,\int_{\mathbb{R}^N}|u_1^n|^2\,\left|\eta_r^{(i)}\right|^2\,dx.
	\end{equation}
	By Young's inequality and taking into account the properties of $\eta_r^{(i)}$, for $0<\delta_n<1$ we have that
	\[
	\begin{split}
		\int_{\mathbb{R}^N}|\nabla u_1^n|^2\,\left|\eta_r^{(i)}\right|^2\,dx&+2\,\int_{\mathbb{R}^N}u_1^n\,\eta^{(i)}_r\,\langle\nabla u_1^n,\nabla \eta_r^{(i)}\rangle\,dx\\
		&\geq \int_{\mathbb{R}^N}|\nabla u_1^n|^2\,\left|\eta_r^{(i)}\right|^2\,dx-\delta_n\,\int_{\mathbb{R}^N}|\nabla u_1^n|^2\,\left|\eta_r^{(i)}\right|^2\,dx-\frac{1}{\delta_n}\,\int_{\mathbb{R}^N} |u_1^n|^2\,|\nabla \eta_r^{(i)}|^2\,dx \\
		&\geq (1-\delta_n)\,\int_{\mathcal{C}_i\setminus \Omega^i_{r+1}}|\nabla u_1^n|^2\,dx-\frac{4}{\delta_n}\int_{\Omega^i_{r+1}\setminus\Omega^i_r} |u_1^n|^2\,dx.
	\end{split}
	\]
	By spending this information in \eqref{eq:first_eigen_3} and using Lemma \ref{lemma:poinc}, we obtain that
	\[
	\mathcal{E}(\Omega)\,(1-\delta_n)\,\int_{\mathcal{C}_i\setminus \Omega^i_{r+1}}|u_1^n|^2\,dx\le \frac{4}{\delta_n}\int_{\Omega^i_{r+1}\setminus\Omega^i_r} |u_1^n|^2\,dx+\lambda_1(\Omega_n)\,\int_{\mathcal{C}_i\setminus\Omega^i_r}|u_1^n|^2\,dx.
	\]
We can decompose	the last term as follows
\[
\lambda_1(\Omega_n)\,\int_{\mathcal{C}_i\setminus\Omega^i_r}|u_1^n|^2\,dx=\lambda_1(\Omega_n)\,\int_{\mathcal{C}_i\setminus\Omega^i_{r+1}}|u_1^n|^2\,dx+\lambda_1(\Omega_n)\,\int_{\Omega^i_{r+1}\setminus\Omega^i_r}|u_1^n|^2\,dx.
\]
This in turn gives
	\begin{equation}
	\label{scappotta}
		\Big((1-\delta_n)\,\mathcal{E}(\Omega)-\lambda_1(\Omega_n)\Big)\,\int_{\mathcal{C}_i\setminus \Omega^i_{r+1}} |u_1^n|^2\,dx\leq \left(\frac{4}{\delta_n}+\lambda_1(\Omega_n)\right)\,\int_{\Omega^i_{r+1}\setminus\Omega^i_r} |u_1^n|^2\,dx.
	\end{equation}
	In light  of \eqref{eq:first_eigen_1} and of the assumption $\lambda_1(\Omega)<\mathcal{E}(\Omega)$, there exists $n_0\ge r_0$ such that
\[
\lambda_1(\Omega_n)\le \frac{\mathcal{E}(\Omega)+\lambda_1(\Omega)}{2},\qquad \mbox{ for every } n\ge n_0.
\]
Thus, we also get
\[
\mathcal{E}(\Omega)-\lambda_1(\Omega_n)\ge \frac{\mathcal{E}(\Omega)-\lambda_1(\Omega)}{2}>0,\qquad \mbox{ for every } n\ge n_0
\]
This entails that for every $n\ge n_0$ the following choice is feasible
\[
\delta_n=\frac{\mathcal{E}(\Omega)-\lambda_1(\Omega_n)}{2\,\mathcal{E}(\Omega)}.
\]
From \eqref{scappotta} we then obtain
\begin{equation}
	\label{riscappotta}
	\frac{\mathcal{E}(\Omega)-\lambda_1(\Omega_n)}{2}\,\int_{\mathcal{C}_i\setminus \Omega^i_{r+1}} |u_1^n|^2\,dx\leq \left(\frac{8\,\mathcal{E}(\Omega)}{\mathcal{E}(\Omega)-\lambda_1(\Omega_n)}+\lambda_1(\Omega_n)\right)\,\int_{\Omega^i_{r+1}\setminus\Omega^i_r} |u_1^n|^2\,dx.
\end{equation}
for every $n\ge n_0$. Observe that for $n\ge n_0$ we also have
\[
\lambda_1(\Omega_n)\le \frac{\mathcal{E}(\Omega)+\lambda_1(\Omega)}{2}<\mathcal{E}(\Omega),
\] 
thus from \eqref{riscappotta} we can get 
\[
\int_{\mathcal{C}_i\setminus \Omega^i_{r+1}} |u_1^n|^2\,dx\leq C_\Omega\,\int_{\Omega^i_{r+1}\setminus\Omega^i_r} |u_1^n|^2\,dx,\qquad \mbox{ for } n\ge n_0 \mbox{ and } i=1,\dots,k,
\]
where $C_\Omega>0$ is the constant given by
\[
C_\Omega:=\frac{4}{\mathcal{E}(\Omega)-\lambda_1(\Omega)}\,\left(\frac{16\,\mathcal{E}(\Omega)}{\mathcal{E}(\Omega)-\lambda_1(\Omega)}+\mathcal{E}(\Omega)\right).
\]
We can sum over $i$ such an estimate, so to get 
\begin{equation}
\label{predecay}
\int_{\mathbb{R}^N\setminus \Omega_{r+1}} |u_1^n|^2\,dx=\sum_{i=1}^k\int_{\mathcal{C}_i\setminus \Omega^i_{r+1}} |u_1^n|^2\,dx\le C_\Omega\,\int_{\Omega_{r+1}\setminus\Omega_r} |u_1^n|^2\,dx,
\end{equation}
which holds for $r\ge r_0$ and $n\ge n_0$. It is not difficult to see that this estimate gives the desired uniform decay at infinity: indeed, if we set
\[
A_n(r)=\int_{\mathbb{R}^N\setminus \Omega_{r}} |u_1^n|^2\,dx,
\]
the previous estimate can be recast into
\[
A_n(r+1)\le C_\Omega\,(A_n(r)-A_n(r+1))\qquad \mbox{ that is }\qquad A_n(r+1)\le \frac{C_\Omega}{C_\Omega+1}\,A_n(r).
\]
In particular, by using that $r\mapsto A_n(r)$ is non-increasing, for every $R\ge r_0$ we can obtain\footnote{For every $t\in\mathbb{R}$, we denote its {\it integer part} by 
\[
\big\lfloor t\big\rfloor=\max\Big\{n\in\mathbb{Z}\, :\, t\ge n\Big\}.
\]
}
\[
A_n(R+1)\le A_n(r_0+\big\lfloor R-r_0\big\rfloor+1)\le \left(\frac{C_\Omega}{C_\Omega+1}\right)^{\big\lfloor R-r_0\big\rfloor+1}\,A_n(r_0)\le \left(\frac{C_\Omega}{C_\Omega+1}\right)^{R-r_0}\, A_n(r_0).
\]  
By using that
\[
A_n(r_0)\le \int_{\mathbb{R}^N} |u_1^n|^2\,dx=1,
\]
we finally get the desired decay property \eqref{eq:RFK}. Therefore, up to a subsequence, we have that $u_1^n$ strongly converges to $u_1$ in $L^2(\Omega)$, as $n$ goes to $\infty$. Moreover, $u_1\geq 0$ since $u_1^n\geq 0$ for all $n\in\mathbb{N}$. 
\par
The last part of the statement is a classical fact.
This completes the proof.
\end{proof}
\begin{remark}
If we admit the cylindrical sets $\mathcal{C}_i$ to have an overlapping with infinite volume, i.e. if condition (A3) is violated, the previous result may fail to be true. As a simple example, take $N=2$, $k=2$ and 
\[
\Omega_0=(0,2)\times(0,2),\qquad \mathcal{C}_1=(0,2)\times(0,+\infty),\qquad \mathcal{C}_2=(1,3)\times(0,+\infty).
\] 
In this case, we have 
\[
\Omega=\Omega_0\cup \mathcal{C}_1\cup \mathcal{C}_2=(0,3)\times(0,+\infty),
\]
and 
\[
\lambda_1(\Omega)=\frac{\pi^2}{9}<\min\Big\{\lambda_1(\mathcal{C}_1),\lambda_1(\mathcal{C}_2)\Big\}=\frac{\pi^2}{4}.
\]
However, it is well-know that $\lambda_1(\Omega)$ is not attained, in this case.
\end{remark}

\subsection{Examples}

\begin{example}[Massive core]
\label{exa:massive}
Let us suppose that $\Omega\subseteq\mathbb{R}^N$ satisfies Assumptions \ref{ass:Omega}, with its core $\Omega_0$ having the following property: 
\[
\mbox{ if }\mathcal{E}(\Omega)=\lambda_1(E_i), \mbox{ then }\quad \Omega_0\cap \mathcal{C}_i\not=\emptyset\quad \mbox{ and }\quad \lambda_1(\Omega_0)\le \lambda_1(\mathcal{C}_i).
\]
In particular, we have $\Omega_0\subseteq \Omega_0\cup \mathcal{C}_i\subseteq \Omega$ and $\Omega_0\cup \mathcal{C}_i$ is a connected open set such that 
\[
|(\Omega_0\cup \mathcal{C}_i)\setminus \Omega_0|>0.
\]
Moreover, $\Omega_0$ is bounded and thus $\lambda_1(\Omega_0)$ is actually an eigenvalue. These facts imply that 
\[
\lambda_1(\Omega_0\cup \mathcal{C}_i)<\lambda_1(\Omega_0).
\]
This in turn gives
\[
\lambda_1(\Omega)<\lambda_1(\Omega_0)\le \lambda_1(\mathcal{C}_i)=\mathcal{E}(\Omega).
\]
Thus, we can apply Theorem \ref{teo:first_eigen}.
\end{example}

\begin{example}[Infinite cross]
\label{exa:cross}
We take the open set 
\[
\Omega=\Big(\mathbb{R}\times (-1,1)\Big)\times \Big((-1,1)\times \mathbb{R}\Big).
\]
Observe that such a set satisfies Assumptions \ref{ass:Omega}: indeed, it can be also written as follows
\[
\Omega=Q\cup \bigcup_{i=1}^4 \mathcal{C}_i,
\]
where $Q$ is the open square with vertices $(0,\pm 2)$ and $(\pm 2,0)$ and 
\[
\mathcal{C}_1=(-1,+\infty)\times (-1,1),\qquad \mathcal{C}_1=(-\infty,1)\times (-1,1),
\]
\[
\mathcal{C}_3=(-1,1)\times (1,+\infty),\qquad\mathcal{C}_4=(-1,1)\times (-\infty,1).
\]
We observe that 
\[
\mathcal{E}(\Omega)=\lambda_1(\mathcal{C}_1)=\dots=\lambda_1(\mathcal{C}_4)=\frac{\pi^2}{4},
\]
and that (see \cite[Chapter 1]{Hen})
\[
\lambda_1(Q)=\frac{\pi^2}{(2\,\sqrt{2})^2}+\frac{\pi^2}{(2\,\sqrt{2})^2}=\mathcal{E}(\Omega).
\] 
Thus, this set is a particular case of Example \ref{exa:massive} and we can apply Theorem \ref{teo:first_eigen}. A study of the spectral properties of this set can be also found in \cite{ABGM, Na} and \cite{SRW}. 
\par
We point out that this set could also be realized simply as the union of the four cylindrical sets $\mathcal{C}_1,\dots,\mathcal{C}_4$, i.e. we could think that the core $\Omega_0=\emptyset$. This shows that Theorem \ref{teo:first_eigen} may cover cases where the core is empty and the ``compactness'' is created by some non-trivial intersections of the ``tubes''.
\begin{figure}
\includegraphics[scale=.2]{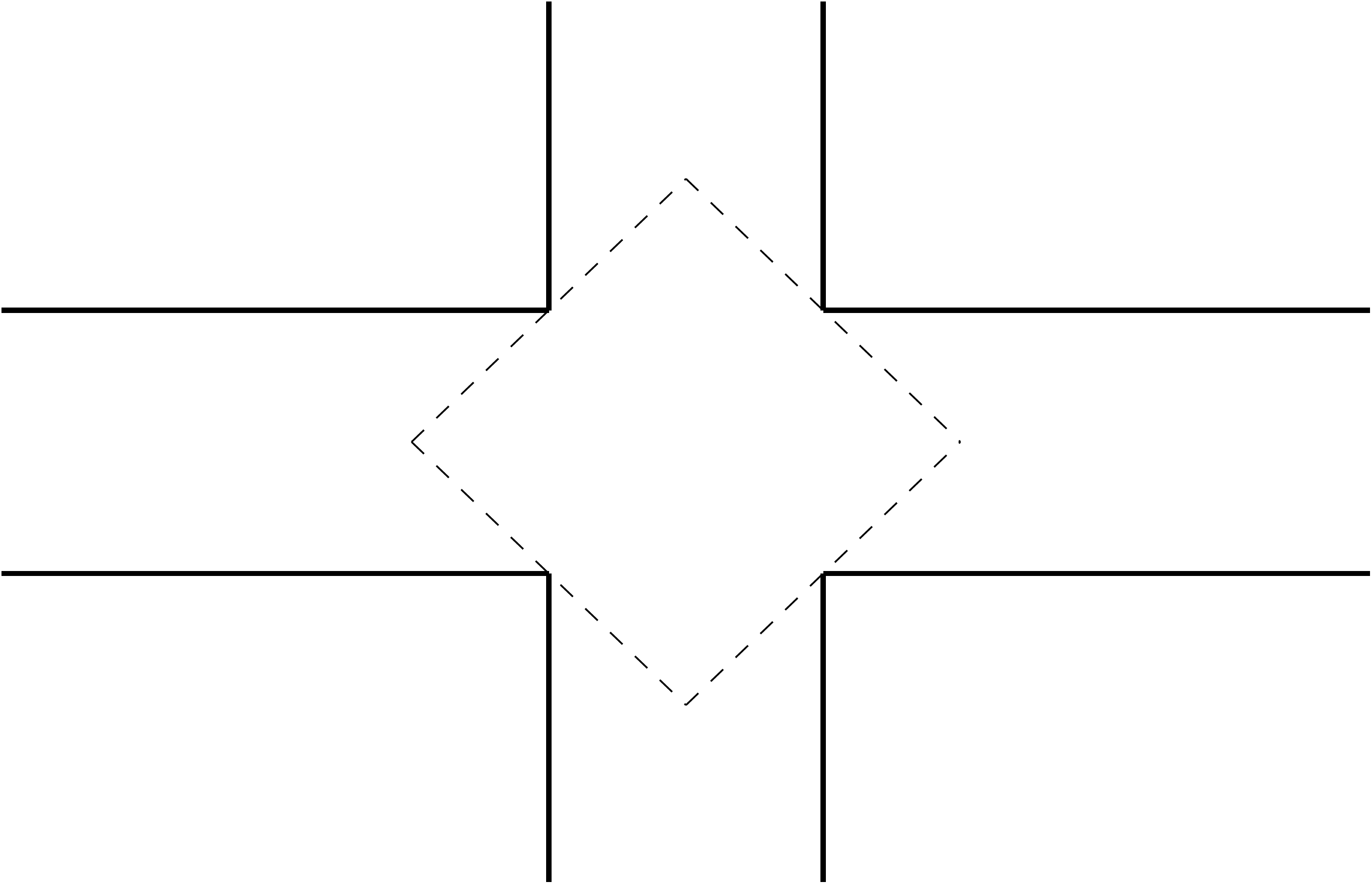}
\caption{The set of Example \ref{exa:cross}. In dashed line, the square $Q$ such that $\lambda_1(Q)\le \mathcal{E}(\Omega)$.}
\end{figure}
\end{example}

\begin{example}[The broken strip]
We fix $0<\vartheta<\pi/2$ and take the open set $\Omega$ defined as follows
\[
\Omega=\left\{(x_1,x_2)\in\mathbb{R}^2\,:\, \left|\frac{x_2}{\tan\vartheta}\right|-\frac{1}{\sin\vartheta}<x_1<\left|\frac{x_2}{\tan\vartheta}\right|\right\},
\]
see Figure \ref{fig:guida_angolare}. This set satisfies Assumptions \ref{ass:Omega}, by taking for example
\[
\Omega_0=T:=\Big\{(x_1,x_2)\in\Omega\,:\, x_1<0\Big\} \qquad (\mbox{\it isosceles triangle}),
\]
and the two half-strips
\[
\mathcal{C}_1=\Big\{(x_1,x_2)\in\Omega\, :\, x_1\,\cos\vartheta+x_2\,\sin\vartheta>0\Big\},
\]
\[
\mathcal{C}_2=\Big\{(x_1,x_2)\in\Omega\, :\, x_1\,\cos\vartheta-x_2\,\sin\vartheta>0\Big\}.
\]
\label{exa:strip}
\begin{figure}
\includegraphics[scale=.2]{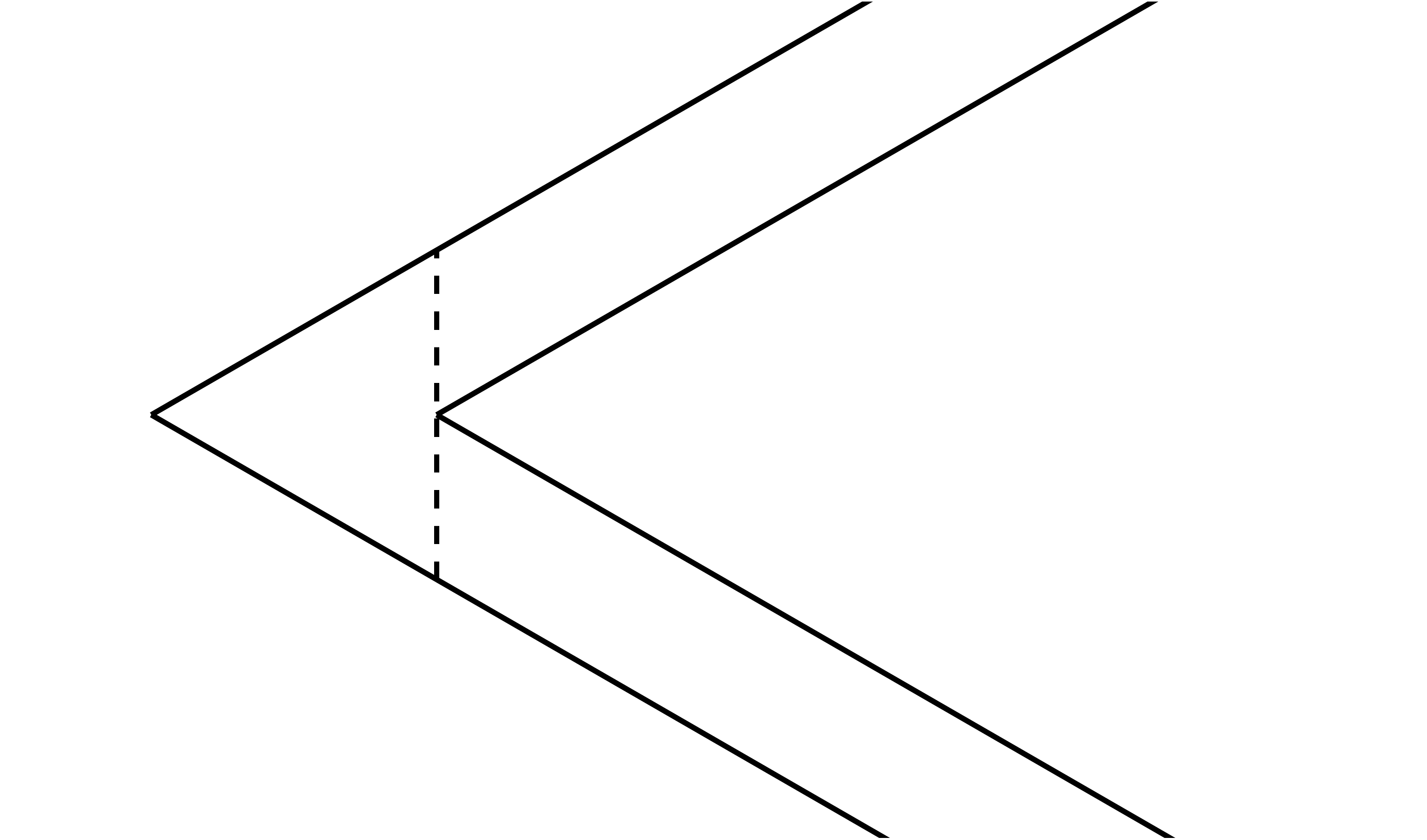}
\caption{The set of Example \ref{exa:strip}. In dashed line, the triangle $T$ such that $\lambda_1(T)\le \mathcal{E}(\Omega)$.}
\label{fig:guida_angolare}
\end{figure}
By \cite[Theorem 1.1]{Si}, we have the following P\'olya--type upper bound for triangles 
\[
\lambda_1(T)\le \left(\frac{\pi}{3}\right)^2\,\left(\frac{L}{A}\right)^2,
\]
where $L$ and $A$ are the perimeter and the area of $T$, respectively. It is easily seen that 
\[
L=\frac{2}{\sin\vartheta\,\cos\vartheta}+\frac{2}{\cos\vartheta}\qquad \mbox{ and }\qquad A=\frac{1}{\sin\vartheta\,\cos\vartheta},
\]
thus we have
\[
\lambda_1(T)\le \left(\frac{\pi}{3}\right)^2\,\left(2+2\,\sin\vartheta\right)^2.
\]
In particular, for every $0<\vartheta\le \pi/6$ we have 
\[
\lambda_1(T)\le \left(\frac{\pi}{3}\right)^2\,\left(2+2\,\sin\vartheta\right)^2\le \pi^2.
\]
This entails that for all these angles we have 
\[
\lambda_1(T)\le \pi^2=\lambda_1(\mathcal{C}_1)=\lambda_1(\mathcal{C}_2)=\mathcal{E}(\Omega).
\]
Thus, for every $0<\vartheta\le \pi/6$, also this set is a particular case of Example \ref{exa:massive}. Thus,
we can apply Theorem \ref{teo:first_eigen} and get existence of a first eigenfunction. We refer to \cite{ABGM} and \cite{DR} for further studies on this set.
\end{example}

\begin{example}[Hersch's pipe]
\label{exa:pipe}
We take the set $\mathbf{H}$ introduced in \eqref{pipe}.
We have already observed in the introduction that
$\mathbf{H}$ satisfies Assumptions \ref{ass:Omega}, by taking $N=2$, $k=2$ and
\[
\Omega_0=\Big\{(x,y)\in\mathbb{R}^2\,:\, x_1^2+x_2^2<1\Big\}\setminus\Big([0,1]\times \{0\}\Big) \qquad (\mbox{\it slit disk}),
\]
\[
\mathcal{C}_1=(0,+\infty)\times(0,1),\qquad \mathcal{C}_2=(0,+\infty)\times (-1,0).
\]
Moreover, we have already remarked that
\[
\lambda_1(\Omega_0)=\pi^2.
\]
By observing that $\mathcal{E}(\mathbf{H})=\lambda_1(\mathcal{C}_1)=\lambda_1(\mathcal{C}_2)=\pi^2$ and that $\Omega_0\cap \mathcal{C}_i\not=\emptyset$ for both $i=1,2$, we see that $\mathbf{H}$ is actually a particular case of Example \ref{exa:massive}. Thus, we can apply Theorem \ref{teo:first_eigen}, here as well.
\end{example}

\section{Higher eigenvalues}
\label{sec:4}

In the next result, we still denote by $\mathcal{E}(\Omega)$ the threshold energy defined in \eqref{energybelow} and we let $\lambda_j(\Omega)$ be as in \eqref{lambdak}.
\begin{theorem}
\label{teo:higher_eigen}
Let $\Omega\subseteq\mathbb{R}^N$ be an open set satistying Assumptions \ref{ass:Omega}. If $\ell\in\mathbb{N}\setminus\{0,1\}$ is such that
\[
	\lambda_\ell(\Omega)<\mathcal{E}(\Omega),
\]
then for every $j=1,\dots,\ell$ we have:
\begin{enumerate}
\item $\lambda_j(\Omega)$ is an eigenvalue, with associated eigefunction $u_j\in \mathcal{S}_2(\Omega)$. Moreover, $\{u_1,\dots,u_j\}$ can be chosen to form an orthonormal set in $L^2(\Omega)$ and we have
\[
\lambda_j(\Omega)=\min_{u\in \mathcal{S}_2(\Omega)}\left\{\int_\Omega |\nabla u|^2\,dx\,:\,\int_\Omega u\,u_m\,dx=0,\ \mbox{for }m=1,\dots,j-1\right\},
\]
with the minimum attained by $u_j$;
\vskip.2cm
\item if $\{\Omega_n\}_{n\in\mathbb{N}}$ is the exhausting sequence of sets defined in \eqref{Omegar}, for every $n\in\mathbb{N}$ there exists $\{u^n_1,\dots,u^n_j\}\subseteq \mathcal{S}_2(\Omega_n)$ eigenfunctions of $\Omega$ associated to $\lambda_1(\Omega_n),\dots,\lambda_j(\Omega_n)$ which forms an orthonormal set in $L^2(\Omega_n)$ and such that (up to a subsequence)
\[
\lim_{n\to\infty} \|u^n_m-u_m\|_{L^2(\Omega)}=0,\qquad \mbox{ for } m=1,\dots,j.
\]
\end{enumerate} 
\end{theorem}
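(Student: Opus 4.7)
The plan is to run the strategy from the proof of Theorem \ref{teo:first_eigen} \emph{in parallel} over the finitely many indices $j=1,\dots,\ell$. Combining Lemma \ref{lemma:crescono} with the hypothesis $\lambda_\ell(\Omega)<\mathcal{E}(\Omega)$, and invoking Lemma \ref{lm:limlamk}, I have $\lambda_j(\Omega)<\mathcal{E}(\Omega)$ and $\lambda_j(\Omega_n)\to\lambda_j(\Omega)$ for every $j=1,\dots,\ell$. Since each $\Omega_n$ is bounded, it admits an $L^2(\Omega_n)$-orthonormal basis of Dirichlet eigenfunctions $\{u^n_1,u^n_2,\dots\}\subseteq\mathcal{S}_2(\Omega_n)$, with $-\Delta u^n_j=\lambda_j(\Omega_n)\,u^n_j$ in $\Omega_n$; I extend each by zero to $\Omega$. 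The identities $\|u^n_j\|_{L^2}=1$ and $\|\nabla u^n_j\|_{L^2}^2=\lambda_j(\Omega_n)$ yield a bound in $W^{1,2}_0(\Omega)$, so by a diagonal extraction I obtain a subsequence (still indexed by $n$) and functions $u_1,\dots,u_\ell\in W^{1,2}_0(\Omega)$ with $u^n_j\rightharpoonup u_j$ weakly in $W^{1,2}_0(\Omega)$ for every $j$.

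The heart of the proof is simultaneous strong $L^2$-convergence via the same Caccioppoli-type argument used for Theorem \ref{teo:first_eigen}. For each fixed $j\le\ell$, testing the equation for $u^n_j$ with $u^n_j\,(\eta^{(i)}_r)^2$, applying Young's inequality, and invoking the Poincar\'e inequality at infinity (Lemma \ref{lemma:poinc}), together with $\lambda_j(\Omega_n)\to\lambda_j(\Omega)<\mathcal{E}(\Omega)$, yields the decay estimate
\[
\int_{\mathbb{R}^N\setminus\Omega_{r+1}}|u^n_j|^2\,dx\le C_{\Omega,j}\int_{\Omega_{r+1}\setminus\Omega_r}|u^n_j|^2\,dx,
\]
valid for every $r\ge r_0$ and every $n$ larger than some $n_j$. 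Iterating geometrically as in the proof of Theorem \ref{teo:first_eigen} produces uniform exponential decay of $r\mapsto\int_{\mathbb{R}^N\setminus\Omega_r}|u^n_j|^2\,dx$ in $n$. Since $j$ ranges over a \emph{finite} set, the thresholds $n_j$ and the constants $C_{\Omega,j}$ can be made uniform in $j$. The Riesz--Fr\'echet--Kolmogorov theorem then upgrades weak convergence to $u^n_j\to u_j$ strongly in $L^2(\Omega)$ for every $j=1,\dots,\ell$.

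Strong $L^2$-convergence lets me pass to the limit in $\int_\Omega u^n_i\,u^n_j\,dx=\delta_{ij}$, so $\{u_1,\dots,u_\ell\}$ is orthonormal in $L^2(\Omega)$, and in particular each $u_j\in\mathcal{S}_2(\Omega)$ is nontrivial. Testing the weak formulation against any $\varphi\in C^\infty_0(\Omega)$ (which lies in $W^{1,2}_0(\Omega_n)$ for $n$ large, since $\{\Omega_n\}$ exhausts $\Omega$) and passing to the limit shows that $-\Delta u_j=\lambda_j(\Omega)\,u_j$ in the weak sense. This establishes that $\lambda_j(\Omega)$ is an eigenvalue with eigenfunction $u_j$, and also proves assertion (2) of the statement.

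It remains to derive the constrained variational characterization in (1). Given any $u\in\mathcal{S}_2(\Omega)$ orthogonal to $u_1,\dots,u_{j-1}$, the subspace $F=\mathrm{span}\{u_1,\dots,u_{j-1},u\}$ is $j$-dimensional, and since the $u_m$'s are $L^2$-orthonormal eigenfunctions and $u\perp u_m$ in $L^2$, the quadratic form $v\mapsto\int_\Omega|\nabla v|^2\,dx$ diagonalizes on $F$ with entries $\lambda_1(\Omega),\dots,\lambda_{j-1}(\Omega)$ and $\int_\Omega|\nabla u|^2\,dx$. The definition \eqref{lambdak} thus gives $\lambda_j(\Omega)\le\max\{\lambda_{j-1}(\Omega),\int_\Omega|\nabla u|^2\,dx\}$, and a short induction on the multiplicity of $\lambda_j(\Omega)$ (stepping down to the largest index $k\le j-1$ with $\lambda_k(\Omega)<\lambda_j(\Omega)$, and using $L^2$-orthogonality of $u$ to $u_1,\dots,u_k$) upgrades this to $\lambda_j(\Omega)\le\int_\Omega|\nabla u|^2\,dx$, with equality attained at $u=u_j$ by the eigenvalue equation. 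The main obstacle is the simultaneous uniform Caccioppoli decay for all indices $j=1,\dots,\ell$, which is resolved precisely by the finiteness of $\ell$ together with the strict gap $\lambda_\ell(\Omega)<\mathcal{E}(\Omega)$.
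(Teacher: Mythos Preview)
Your proof is correct and follows essentially the same approach as the paper: approximate by orthonormal eigenfunctions of the bounded truncations $\Omega_n$, use the Caccioppoli-type decay estimate along the tubes together with Lemma~\ref{lemma:poinc} to obtain uniform $L^2$-tightness, upgrade to strong $L^2$-convergence via Riesz--Fr\'echet--Kolmogorov, and pass to the limit in the orthogonality relations and the weak equation. The only organizational difference is that the paper sets this up as a finite induction on $j$ (invoking the inductive hypothesis to prove the constrained variational characterization), whereas you treat all indices $j=1,\dots,\ell$ in parallel and prove the characterization by a direct step-down argument to the largest $k$ with $\lambda_k(\Omega)<\lambda_j(\Omega)$; both arrangements are equivalent here since $\ell$ is finite and the gap $\lambda_\ell(\Omega)<\mathcal{E}(\Omega)$ is uniform.
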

\begin{proof}
	We will prove the result by finite induction on $j\in \{1,\dots,\ell\}$. By observing that (see Lemma \ref{lemma:crescono})
	\[
	\lambda_1(\Omega)\le \lambda_\ell(\Omega)<\mathcal{E}(\Omega),
	\]
	we obtain that the case $j=1$ has been proved in Theorem \ref{teo:first_eigen}.
	\par 
	We now assume that the statement holds for every $j=1,\dots,\ell-1$. We need to show that this entails the validity of the case $j=\ell$, as well. In order to do this, we will adapt the same idea of Theorem \ref{teo:first_eigen}, by taking into account the additional difficulties connected with the orthogonality relations.
	\par	
The inductive assumptions implies that $\lambda_1(\Omega),\dots,\lambda_{\ell-1}(\Omega)$ are eigenvalues of $\Omega$, with an orthonormal set of associated eigenfunctions $\{u_1,\dots,u_{\ell-1}\}\subseteq \mathcal{S}_2(\Omega)$. Moreover, we know that these eigenfunctions can be approximated, strongly in $L^2(\Omega)$, by an orthonormal set $\{u_1^n,\dots,u^n_{\ell-1}\}\subseteq \mathcal{S}_2(\Omega_n)$ of eigenfunctions of $\Omega_n$. We then observe that (see \cite[Chapter VI]{CH})
\[
	\begin{split}
		\lambda_\ell(\Omega_n)&=\inf\left\{ \max_{u\in F\cap \mathcal{S}_2(\Omega_n)} \int_{\Omega_n}|\nabla u|^2\,dx\,:\, F \subseteq W^{1,2}_0(\Omega_n) \mbox{ subspace with} \dim F=\ell \right\}\\
		&=\inf_{u\in  \mathcal{S}_2(\Omega_n)}\left\{\int_{\Omega_n} |\nabla u|^2\,dx\,:\,\int_\Omega u\,u^n_j=0,\,\mbox{for }j=1,\dots,\ell-1\right\}.
	\end{split}
\]
We take $u_\ell^n$ to be a minimizer of the last problem, thus by construction $\{u_1^n,\dots,u^n_{\ell-1},u^n_\ell\}$ is an orthonormal set. Existence of $u_\ell^n$ is a plain consequence of the compactness of the embedding $W^{1,2}_0(\Omega_n)\hookrightarrow L^2(\Omega_n)$. By Lemma \ref{lm:limlamk}, we have that
\[
\lim_{n\to\infty} \int_{\Omega_n} |\nabla u_\ell^n|^2\,dx=\lim_{n\to\infty} \lambda_\ell(\Omega_n)=\lambda_\ell(\Omega),
\]
thus the sequence $\{u^n_\ell\}_{n\ge r_0}\subseteq\mathcal{S}_2(\Omega_n)$ is bounded in $W^{1,2}_0(\Omega)$. This implies that, up to a subsequence, it weakly converges in $W^{1,2}(\Omega)$ to a function $u_\ell\in W^{1,2}_0(\Omega)$. We claim that $u_\ell$ is an eigenfunction associated to $\lambda_\ell(\Omega)$, such that
\begin{equation}
	\label{clavinet3}		
			\lim_{n\to \infty}\|u_\ell-u^n_\ell\|_{L^2(\Omega)}=0,
		\end{equation}
		\begin{equation}
		\label{clavinet}
			\int_\Omega u_\ell\,u_j=0,\mbox{ for all }j=1,\dots,\ell-1\qquad \mbox{ and }\qquad \int_\Omega |u_\ell|^2\,dx=1,
\end{equation}	
and
\begin{equation}
\label{clavinet2}
\int_\Omega |\nabla u_\ell|^2\,dx=\min_{\varphi\in \mathcal{S}_2(\Omega)}\left\{\int_\Omega |\nabla \varphi|^2\,dx\,:\,\int_\Omega \varphi\,u_j=0,\,\mbox{for }m=1,\dots,\ell-1\right\},
\end{equation}	
These facts would be sufficient to conclude the proof.
\par
We start from \eqref{clavinet3}: this strong convergence can be inferred by repeating verbatim the compactness argument in the proof of Theorem \ref{teo:first_eigen}. We need to rely this time on the equation for $u^n_\ell$. 
Observe that we already know that the sequence weakly converges to $u_\ell$, thus once the strong compactness is obtained, the strong limit must be the same. Thus \eqref{clavinet3} is established. 
\par
This in particular implies that the second property in \eqref{clavinet} holds true. As for the orthogonality conditions in \eqref{clavinet}, for $j=1,\dots,\ell-1$ we have
\[
		\begin{split}
			\left|\int_\Omega u_\ell\,u_j\,dx\right|&=\lim_{n\to \infty}\left|\int_\Omega u_\ell^n\,u_j\,dx\right|
			=\lim_{n\to\infty}\left|\int_\Omega u_\ell^n\,(u_j-u_j^n)\,dx\right|\le\lim_{n\to\infty}\|u_j-u_j^n\|_{L^2(\Omega)}=0.
		\end{split}
		\]
		Observe that in the second equality we used that $u^n_\ell$ is orthogonal to $u_j^n$ (see above). 	
\par
In order to show that $u_\ell$ is an eigenfunction associated to $\lambda_\ell(\Omega)$, we first observe that $u_\ell$ is non-trivial, thanks to the normalization condition on the $L^2(\Omega)$ norm. It is then sufficient to pass to the limit in the equation for $u^n_\ell$. Indeed, $\{\Omega_n\}_{n\ge r_0}$ is an exhausting sequence for $\Omega$. Thus for every $\varphi\in C^\infty_0(\Omega)$ we have that this is compactly supported in $\Omega_n$, as well, for $n$ large enough (depending on $\varphi$). Hence, from the weak convergence of $\{u^n_\ell\}_{n\in\mathbb{N}}$ and Lemma \ref{lm:limlamk}, for every fixed $\varphi\in C^\infty_0(\Omega)$ we get
		\[
		0=\lim_{n\to\infty}\left(\int_{\Omega_n}\langle \nabla u^n_\ell,\,\nabla\varphi\rangle\,dx-\lambda_\ell(\Omega_n)\,\int_{\Omega_n}u^n_\ell\,\varphi\,dx\right)=\int_{\Omega}\langle \nabla u_\ell,\,\nabla\varphi\rangle\,dx-\lambda_\ell(\Omega)\,\int_{\Omega}u_\ell\,\varphi\,dx.
		\]
This is valid for every $\varphi\in C^\infty_0(\Omega)$, thus by density we get that $u_\ell$ is an eigenfunction, as claimed.
\par
We are only left with proving \eqref{clavinet2}. From \eqref{clavinet} and the fact that $u_\ell$ is an eigenfunction, we already know that 
\[
\lambda_\ell(\Omega)=\int_\Omega |\nabla u_\ell|^2\,dx\ge \inf_{\varphi\in \mathcal{S}_2(\Omega)}\left\{\int_\Omega |\nabla \varphi|^2\,dx\,:\,\int_\Omega \varphi\,u_j=0,\,\mbox{for }m=1,\dots,\ell-1\right\}.
\]
On the other hand, for every $u\in \mathcal{S}_2(\Omega)$ which is orthogonal to $u_1,\dots,u_{\ell-1}$ in $L^2(\Omega)$, we can consider the
$\ell-$dimensional vector subspace of $W^{1,2}_0(\Omega)$ generated by $\{u_1,\dots,u_{\ell-1},u\}$. Then by definition we have
\[
\begin{split}
\lambda_\ell(\Omega)\le \max_{a\in\mathbb{S}^{\ell-1}} \int_\Omega \left|\sum_{j=1}^{\ell-1} a_j\,\nabla u_j+a_\ell\,\nabla u\right|^2\,dx&=\max_{a\in\mathbb{S}^{\ell-1}} \left(\sum_{j=1}^{\ell-1} a_j^2\,\int_\Omega |\nabla u_j|^2\,dx+a_\ell^2\,\int_\Omega |\nabla u|^2\,dx\right)\\
&=\max_{a\in\mathbb{S}^{\ell-1}} \left(\sum_{j=1}^{\ell-1} a_j^2\,\lambda_j(\Omega)+a_\ell^2\,\int_\Omega |\nabla u|^2\,dx\right).
\end{split}
\]
Here we have used the orthogonality conditions and the fact that 
\[
\int_\Omega \langle \nabla u_j,\nabla u\rangle\,dx=\lambda_j(\Omega)\,\int_\Omega u_j\,u\,dx,\qquad \mbox{ for } j=1,\dots,\ell-1.
\]
We now use the inductive assumption to assure that for $j=1,\dots,\ell-1$
\[
\lambda_j(\Omega)=\inf_{\varphi\in \mathcal{S}_2(\Omega)}\left\{\int_\Omega |\nabla \varphi|^2\,dx\,:\,\int_\Omega \varphi\,u_m=0,\,\mbox{for }m=1,\dots,j-1\right\}\le \int_\Omega |\nabla u|^2\,dx.
\]
Thus, from the estimate above we get
\[
\lambda_\ell(\Omega)\le \max_{a\in\mathbb{S}^{\ell-1}} \left(\sum_{j=1}^{\ell-1} a_j^2\,\lambda_j(\Omega)+a_\ell^2\,\int_\Omega |\nabla u|^2\,dx\right)\le \int_\Omega |\nabla u|^2\,dx.
\]
By recalling that $u$ is an arbitrary trial functions for the minimization problem in the right-hand side of \eqref{clavinet2}, we get that
\[
\lambda_\ell(\Omega)\le \inf_{u\in \mathcal{S}_2(\Omega)}\left\{\int_\Omega |\nabla u|^2\,dx\,:\,\int_\Omega u\,u_j=0,\,\mbox{for }m=1,\dots,\ell-1\right\},
\]
as well. Thus \eqref{clavinet2} holds true: observe that we also obtained that the infimum in \eqref{clavinet2} is attained by $u_\ell$. 
\par
The proof is now over.
\end{proof}

\section{Some estimates for eigenfunctions}
\label{sec:5}

In this section, we prove that the eigenfunctions obtained in Sections \ref{sec:3} and \ref{sec:4} 
satisfy suitable decay estimates at infinity.

\begin{theorem}
\label{teo:eigenproperties}
Let $\Omega\subseteq\mathbb{R}^N$ be an open set satistying Assumptions \ref{ass:Omega}. We suppose that 
\[
\lambda_\ell(\Omega)<\mathcal{E}(\Omega),
\]
for some $\ell\in\mathbb{N}\setminus\{0\}$.
For every $j=1,\dots,\ell$, there exists $0<\beta_j<1$ and two constants $C_{1,j},C_{2,j}>0$ such that for every eigenfunction $u_j$ associated to $\lambda_j(\Omega)$ we have    
	\begin{equation}
	\label{decayL2}
		\|u_j\|_{L^2(\Omega \setminus \Omega_R)}\le C_{1,j}\,\|u_j\|_{L^2(\Omega)}\,\beta_j^{R},\qquad \mbox{ for every } R>0,
	\end{equation}
	and
	\begin{equation}
	\label{decayLinfty}
		|u_j(x)|\le C_{2,j}\,\|u_j\|_{L^2(\Omega)}\,\beta_j^{R},\qquad \mbox{ for every } R>0  \mbox{ and for a.\,e. } |x|>R.
	\end{equation}
Moreover, we also have
	\begin{equation}
	\label{stimalinfa}
		\|u_j\|_{L^\infty(\Omega)}\le C_3\,\Big(\lambda_j(\Omega)\Big)^\frac{N}{4}\,\|u_j\|_{L^2(\Omega)},
	\end{equation}
for a constant $C_3=C_3(N)>0$.	
\end{theorem}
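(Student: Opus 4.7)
My plan is to prove the three estimates in turn, recycling the Caccioppoli--type computation from the proof of Theorem \ref{teo:first_eigen} for \eqref{decayL2}, and then using classical elliptic regularity (Moser iteration) for the two $L^\infty$ bounds. First, I would notice that the estimate \eqref{decayL2} can actually be read off the proof of Theorem \ref{teo:first_eigen}: the compactness argument there essentially establishes a uniform tail estimate for the approximating eigenfunctions $u_j^n$, which passes to the limit $u_j$. More concretely, for any eigenfunction $u_j$ associated to $\lambda_j(\Omega)$, I would test the equation $-\Delta u_j = \lambda_j(\Omega)\,u_j$ with $u_j\,(\eta_r^{(i)})^2$, apply Young's inequality with a parameter $\delta$, invoke the Poincar\'e inequality at infinity of Lemma \ref{lemma:poinc} on $\mathcal{C}_i\setminus Q_r^i$, and sum over $i=1,\dots,k$. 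Choosing $\delta = (\mathcal{E}(\Omega)-\lambda_j(\Omega))/(2\,\mathcal{E}(\Omega))$, which is feasible by hypothesis, I obtain
\[
A_j(r+1) \le C_{\Omega,j}\,\bigl(A_j(r)-A_j(r+1)\bigr),\qquad A_j(r):=\int_{\Omega\setminus\Omega_r}|u_j|^2\,dx,
\]
for every $r\ge r_0$, with $C_{\Omega,j}>0$ depending only on $\Omega$ and on the gap $\mathcal{E}(\Omega)-\lambda_j(\Omega)$. This yields the geometric decay $A_j(r+1)\le \beta_j\,A_j(r)$ with $\beta_j:=C_{\Omega,j}/(C_{\Omega,j}+1)\in(0,1)$, which, iterated as at the end of the proof of Theorem \ref{teo:first_eigen}, gives \eqref{decayL2}.

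Second, for the global bound \eqref{stimalinfa}, I would run a standard Moser iteration. By Lemma \ref{lemma:sottosoluzione} it is enough to deal with the non-negative subsolution $|u_j|$. For $p\ge 1$, testing the inequality with $|u_j|^{2p-1}$ (justified by a truncation argument) gives
\[
\frac{2p-1}{p^2}\,\int_\Omega|\nabla |u_j|^p|^2\,dx\le \lambda_j(\Omega)\,\int_\Omega|u_j|^{2p}\,dx,
\]
and combining with the Sobolev embedding $W^{1,2}(\mathbb{R}^N)\hookrightarrow L^{2^*}(\mathbb{R}^N)$ (with $2^*=2N/(N-2)$ for $N\ge 3$, replaced by an arbitrary finite exponent when $N=2$) yields the recursion $\||u_j|\|_{L^{2p\gamma}}\le (C\,\lambda_j(\Omega)\,p)^{1/(2p)}\||u_j|\|_{L^{2p}}$ with $\gamma=N/(N-2)$. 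Iterating over $p=\gamma^n$ and using that $\sum_{n\ge 0}1/(2\gamma^n)=N/4$, the product of constants converges and one obtains the desired bound with exponent $N/4$.

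Third, the pointwise decay \eqref{decayLinfty} follows from combining \eqref{decayL2} with a local $L^\infty$--$L^2$ estimate. Indeed, for $|x|$ sufficiently large, say $|x|\ge R$ with $R$ bigger than some constant depending on $\Omega$, the point $x$ belongs to some tube $\mathcal{C}_i$ and there exists a fixed radius $\rho>0$ (independent of $x$) such that $B_\rho(x)\subseteq \mathcal{C}_i\setminus Q^i_{|x|-\rho}$. On this ball $u_j$ solves $-\Delta u_j=\lambda_j(\Omega)\,u_j$, and a localized Moser iteration (or, equivalently, De Giorgi--Nash--Moser) gives
\[
\|u_j\|_{L^\infty(B_{\rho/2}(x))}\le C(N,\lambda_j(\Omega))\,\|u_j\|_{L^2(B_\rho(x))}.
\]
Inserting \eqref{decayL2} on the right-hand side produces the exponential pointwise bound \eqref{decayLinfty}, after absorbing the shift $\rho$ into the constant $C_{2,j}$ and slightly enlarging $\beta_j$ if necessary.

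The two non-trivial points are the iteration argument producing the sharp power $N/4$ in \eqref{stimalinfa} (which requires tracking constants across infinitely many steps of Moser iteration) and, more subtly, making sure in the proof of \eqref{decayLinfty} that the localization fits entirely in one tube, so that the standard interior estimate applies up to the boundary in a uniform way; this is where Assumption (A3) on the bounded overlap of the $\mathcal{C}_i$ is really used, together with the choice of $r_0$ in \eqref{eq:r_0}.
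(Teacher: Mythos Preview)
Your arguments for \eqref{decayL2} and \eqref{stimalinfa} match the paper's proof essentially line by line: the paper also reproduces the Caccioppoli/Poincar\'e computation of Theorem \ref{teo:first_eigen} (applied to the subsolution $|u_j|$ via Lemma \ref{lemma:sottosoluzione}) and then runs a global Moser iteration; for $N=2$ the paper uses the Ladyzhenskaya inequality \eqref{eq:lade} rather than a vague ``arbitrary finite exponent'', which is what gives the correct scale-invariant power $N/4=1/2$.

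For \eqref{decayLinfty} there is a genuine gap in your argument as written, and the paper takes a different route. Your claim that ``there exists a fixed radius $\rho>0$ such that $B_\rho(x)\subseteq \mathcal{C}_i\setminus Q^i_{|x|-\rho}$'' is false: points $x$ arbitrarily close to the lateral boundary $\mathbf{R}_i(\partial E_i\times(0,+\infty))+\mathbf{t}_i$ admit no such ball, since the cross-section $E_i$ is bounded. Assumption (A3) only rules out overlaps between different tubes at infinity; it says nothing about the distance to the lateral boundary of a single tube, so it does not rescue this step. Consequently ``$u_j$ solves $-\Delta u_j=\lambda_j(\Omega)u_j$ on this ball'' is not available, and a pure interior estimate does not apply.

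The paper circumvents this by running the localized Moser iteration with the \emph{tubular} cut-offs $\eta^{(i)}_{r,R}$ of Definition \ref{def:cutoff}, which depend only on the axial coordinate along $\mathcal{C}_i$ and are therefore smooth on $\overline\Omega$: the test functions $|u_j|^\gamma(\eta^{(i)}_{r,R})^2$ belong to $W^{1,2}_0(\Omega)$ for any such cut-off, and the iteration yields directly $\|u_j\|_{L^\infty(\mathcal{C}_i\setminus Q^i_{R_0+2})}\le C\,\|u_j\|_{L^2(\Omega\setminus\Omega_{R_0+1})}$, after which \eqref{decayL2} concludes. Your approach can be repaired along similar lines: since $|u_j|\in W^{1,2}_0(\Omega)\cap L^\infty(\Omega)$ (the latter from \eqref{stimalinfa}), for any ball cut-off $\chi\in C^\infty_0(\mathbb{R}^N)$ the product $|u_j|^\gamma\chi^2$ still lies in $W^{1,2}_0(\Omega)$, so the Caccioppoli step of the Moser iteration is legitimate even when $B_\rho(x)$ meets $\partial\Omega$; the resulting local $L^\infty$--$L^2$ bound then reads $\|u_j\|_{L^\infty(B_{\rho/2}(x))}\le C\,\|u_j\|_{L^2(B_\rho(x)\cap\Omega)}$. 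This is the standard ``subsolution extended by zero'' trick, but it has to be stated explicitly---it is not the interior De Giorgi--Nash--Moser estimate you invoke.
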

\begin{proof}
We first observe that $|u_j|\in W^{1,2}_0(\Omega)$ is a non-negative function such that 
\begin{equation}
\label{eigen1}
\int_{\Omega} \langle \nabla |u_j|,\nabla \varphi\rangle\,dx\le \lambda\,\int_{\Omega} |u_j|\,\varphi\,dx,\qquad \mbox{ for every } \varphi\in W^{1,2}_0(\Omega) \ \mbox{ such that } \varphi\ge 0,
\end{equation}
thanks to Lemma \ref{lemma:sottosoluzione}.
We divide the proof in three parts, according to the property of $u_j$ we need to prove.
\vskip.2cm\noindent
\underline{\it Decay in $L^2$.} In order to prove \eqref{decayL2}, we first observe that $u_j$ satisfies the following estimate
	\[
		\Big((1-\delta)\,\mathcal{E}(\Omega)-\lambda_j(\Omega)\Big)\,\int_{\mathcal{C}_i\setminus \Omega^i_{r+1}} |u_j|^2\,dx\leq \left(\frac{4}{\delta}+\lambda_j(\Omega)\right)\,\int_{\Omega^i_{r+1}\setminus\Omega^i_r} |u_j|^2\,dx.
	\]
Indeed, it is sufficient to start from \eqref{eigen1} and reproduce verbatim the proof of \eqref{scappotta}, from the proof of Theorem \ref{teo:first_eigen}. We then choose 
\[
\delta=\frac{\mathcal{E}(\Omega)-\lambda_j(\Omega)}{2\,\mathcal{E}(\Omega)}.
\]
With simple manipulations, along the lines we used to get \eqref{predecay}, we now obtain
\[
\int_{\mathbb{R}^N\setminus \Omega_{r+1}} |u_j|^2\,dx=\sum_{i=1}^k\int_{\mathcal{C}_i\setminus \Omega^i_{r+1}} |u_j|^2\,dx\le C_{\Omega,j}\,\int_{\Omega_{r+1}\setminus\Omega_r} |u_j|^2\,dx.
\]
The constant $C_{\Omega,j}>0$ is given by 
\[
C_{\Omega,j}:=\frac{2}{\mathcal{E}(\Omega)-\lambda_j(\Omega)}\,\left(\frac{8\,\mathcal{E}(\Omega)}{\mathcal{E}(\Omega)-\lambda_j(\Omega)}+\mathcal{E}(\Omega)\right).
\]
With same argument as in the proof of Theorem \ref{teo:first_eigen}, the previous estimate permits to infer that
\[
\int_{\mathbb{R}^N\setminus \Omega_{R+1}} |u_j|^2\,dx\le \left(\frac{C_{\Omega,j}}{C_{\Omega,j}+1}\right)^{R-r_0}\,\int_\Omega |u_j|^2\,dx,\qquad \mbox{ for every } R\ge r_0.
\] 
By setting 
\[
\beta_j=\sqrt{\frac{C_{\Omega,j}}{C_{\Omega,j}+1}}\qquad \mbox{ and }\qquad C_{1,j}=\left(\frac{C_{\Omega,j}}{C_{\Omega,j}+1}\right)^{-\frac{r_0}{2}-1},
\]
we get \eqref{decayL2} for $R\ge r_0+1$. On the other hand, for $0<R<r_0+1$ it is sufficient to notice that
\[
\|u_j\|_{L^2(\Omega \setminus \Omega_R)}\le \|u_j\|_{L^2(\Omega)}\le \|u_j\|_{L^2(\Omega)}\,\left(\frac{1}{\beta_j^{r_0+1}}\right)\,\beta_j^R,
\]
with the same choice of $0<\beta_j<1$ as above.
\vskip.2cm\noindent
\underline{\it Global boundedness}. We now prove that $u_j\in L^\infty(\Omega)$. In what follows, for simplicity we will simply write $u$ in place of $|u_j|$.
For every $\gamma\ge 1$ and $M>0$, we insert in \eqref{eigen1} the test function
\[
\varphi=u_M^\gamma:=\left(\min\{u,M\}\right)^\gamma.
\]
Thanks to the Chain Rule in $W^{1,2}_0(\Omega)$, this is a feasible test function. With standard computations, we obtain
\begin{equation}
\label{cub}
\frac{4\,\gamma}{(\gamma+1)^2}\,\int_\Omega \left|\nabla u_M^\frac{\gamma+1}{2}\right|^2\,dx=\lambda_j(\Omega)\,\int_\Omega u\,u_M^\gamma.
\end{equation}
In order to estimate from below the left-hand side, we will use the following two functional inequalities: for $N\ge 3$, the {\it Sobolev inequality} (see \cite{TaG})
\begin{equation*}
T_N\,\left(\int_{\mathbb{R}^N} |\varphi|^{2^*}\,dx\right)^\frac{2}{2^*}\le \int_{\mathbb{R}^N} |\nabla \varphi|^2,\qquad \mbox{ for } T_N=N\,(N-2)\,\pi\,\left(\frac{\Gamma(N/2)}{\Gamma(N)}\right)^\frac{2}{N},
\end{equation*}
while for $N=2$ we will use the {\it Ladyzhenskaya inequality} (see \cite[equation (1.11)]{Le})
\begin{equation}\label{eq:lade}
\pi\,\left(\int_{\mathbb{R}^2} |\varphi|^4\,dx\right)\le \left(\int_{\mathbb{R}^2} |\nabla \varphi|^2\,dx\right)\,\left(\int_{\mathbb{R}^2} |\varphi|^2\,dx\right),
\end{equation}
both holding for every $\varphi\in W^{1,2}(\mathbb{R}^N)$. By sticking for the moment to the case $N\ge 3$, we then obtain
\[
\frac{4\,\gamma}{(\gamma+1)^2}\,T_N\,\left(\int_\Omega \left|u_M^\frac{\gamma+1}{2}\right|^{2^*}\,dx\right)^\frac{2}{2^*}\le \lambda_j(\Omega)\,\int_\Omega u\,u_M^\gamma.
\]
We now introduce the parameter
\[
\vartheta=\frac{\gamma+1}{2}\ge 1,
\] 
and rewrite the previous estimate as
\[
\begin{split}
\left(\int_\Omega u_M^{2^*\vartheta}\,dx\right)^\frac{2}{2^*}&\le\frac{\vartheta^2}{2\,\vartheta-1}\,\frac{\lambda_j(\Omega)}{T_N}\,\int_\Omega u\,u_M^{2\,\vartheta-1}\le \vartheta\,\frac{\lambda_j(\Omega)}{T_N}\,\int_\Omega u\,u_M^{2\,\vartheta-1}.
\end{split}
\]
We then define the recursive sequence of exponents
\begin{equation}
\label{sequenza}
\vartheta_0=1,\qquad \vartheta_{i+1}=\frac{2^*}{2}\,\vartheta_i=\left(\frac{N}{N-2}\right)^{i+1}.
\end{equation}
Moreover, with simple manipulations we also get
\begin{equation}
\label{schema}
\left(\int_\Omega u_M^{2\vartheta_{i+1}}\,dx\right)^\frac{1}{2\vartheta_{i+1}}\le \vartheta_i^\frac{1}{2\vartheta_i}\,\left(\frac{\lambda_j(\Omega)}{T_N}\right)^\frac{1}{2\vartheta_i}\,\left(\int_\Omega u\,u_M^{2\,\vartheta_i-1}\right)^\frac{1}{2\vartheta_i}.
\end{equation}
We claim at first that \eqref{schema} shows that $u\in L^{2\vartheta_i}(\Omega)$, for every $i\in\mathbb{N}$. We prove this fact by induction: for $i=0$, we have $2\vartheta_0=2$ and $u\in L^2(\Omega)$ holds true by assumption. Let us now assume that $u\in L^{2\vartheta_i}(\Omega)$: this assumption, \eqref{schema} and $u_M\le u$ imply that 
\[
\left(\int_\Omega u_M^{2\vartheta_{i+1}}\,dx\right)^\frac{1}{2\vartheta_{i+1}}\le \vartheta_i^\frac{1}{2\vartheta_i}\,\left(\frac{\lambda_j(\Omega)}{T_N}\right)^\frac{1}{2\vartheta_i}\,\left(\int_\Omega u^{2\,\vartheta_i}\right)^\frac{1}{2\vartheta_i}.
\]
The right-hand side is finite and independent of $M>0$: by taking the limit as $M$ goes to $\infty$ and using Fatou's Lemma, we then obtain
\begin{equation}
\label{schema2}
\left(\int_\Omega u^{2\vartheta_{i+1}}\,dx\right)^\frac{1}{2\vartheta_{i+1}}\le \vartheta_i^\frac{1}{2\vartheta_i}\,\left(\frac{\lambda_j(\Omega)}{T_N}\right)^\frac{1}{2\vartheta_i}\,\left(\int_\Omega u^{2\,\vartheta_i}\right)^\frac{1}{2\vartheta_i}.
\end{equation}
Thus $u\in L^{2\vartheta_{i+1}}(\Omega)$, as well.
\par
By observing that $\{\vartheta_i\}_{i\in\mathbb{N}}$ is a increasingly diverging sequence, we then obtain in particular that $u\in L^\gamma(\Omega)$, for every $2\le \gamma<+\infty$. In order to obtain the boundedness of $u$, it is now sufficient to use recursively the scheme of reverse H\"older inequalities \eqref{schema2}: after $n$ iterations, we get
\begin{equation}
\label{schema3}
\left(\int_\Omega u^{2\vartheta_n}\,dx\right)^\frac{1}{2\vartheta_n}\le \left(\prod_{i=0}^{n-1} \vartheta_i^\frac{1}{2\vartheta_i}\right)\,\left(\frac{\lambda_j(\Omega)}{T_N}\right)^{\sum\limits_{i=0}^{n-1}\frac{1}{2\vartheta_i}}\,\left(\int_\Omega u^{2}\,dx\right)^\frac{1}{2}.
\end{equation}
By observing that 
\begin{equation}
\label{limiteexponents}
\lim_{n\to\infty}\sum\limits_{i=0}^{n-1}\frac{1}{2\vartheta_i}=\frac{1}{2}\,\sum_{i=0}^\infty \left(\frac{N-2}{N}\right)^i=\frac{N}{4},
\end{equation}
and that
\[
\lim_{n\to\infty} \left(\prod_{i=0}^{n-1} \vartheta_i^\frac{1}{2\vartheta_i}\right)=:C_N<+\infty,
\]
and passing to the limit as $n\to\infty$ in \eqref{schema3}, we finally get
\[
\|u\|_{L^\infty(\Omega)}\le C_N\,\left(\frac{\lambda_j(\Omega)}{T_N}\right)^\frac{N}{4}\,\|u\|_{L^2(\Omega)}.
\]
This gives the boundedness of $u$, in the case $N\ge 3$.
\par
We briefly describe how to adapt the proof to the case $N=2$: we go back to \eqref{cub} and multiply both sides by the $L^2$ norm of $u_M^{(\gamma+1)/2}$. This gives
\[
\frac{4\,\gamma}{(\gamma+1)^2}\,\left(\int_\Omega \left|\nabla u_M^\frac{\gamma+1}{2}\right|^2\,dx\right)\,\left(\int_\Omega u_M^{\gamma+1}\,dx\right)=\lambda_j(\Omega)\,\left(\int_\Omega u\,u_M^\gamma\,dx\right)\,\left(\int_\Omega u_M^{\gamma+1}\,dx\right).
\]
We use the Ladyzhenskaya inequality on the left-hand side and use again the parameter $\vartheta=(\gamma+1)/2$. We now get
\[
\begin{split}
\int_\Omega u_M^{4\vartheta}\,dx\le \vartheta\,\frac{\lambda_j(\Omega)}{\pi}\,\left(\int_\Omega u\,u_M^{2\vartheta-1}\,dx\right)\,\left(\int_\Omega u_M^{2\vartheta}\,dx\right).
\end{split}
\]
This time we define the recursive sequence of exponents
\[
\vartheta_0=1,\qquad \vartheta_{i+1}=2\,\vartheta_i=2^{i+1},
\]
thus, in place of \eqref{schema}, we now get
\[
\left(\int_\Omega u_M^{2\vartheta_{i+1}}\,dx\right)^\frac{1}{2\vartheta_{i+1}}\le \vartheta_i^\frac{1}{4\vartheta_i}\,\left(\frac{\lambda_j(\Omega)}{\pi}\right)^\frac{1}{4\vartheta_i}\,\left(\int_\Omega u\,u_M^{2\,\vartheta_i-1}\right)^\frac{1}{4\vartheta_i}\,\left(\int_\Omega u_M^{2\vartheta_i}\,dx\right)^\frac{1}{4\vartheta_i}.
\]
We can now proceed as above, to prove at first that $u\in L^\gamma(\Omega)$ for every $2\le \gamma<+\infty$ and then obtain the estimate
\[
\|u\|_{L^\infty(\Omega)}\le C\,\sqrt{\frac{\lambda_j(\Omega)}{\pi}}\,\|u\|_{L^2(\Omega)},
\]
where $C>0$ is a universal constant.
\vskip.2cm\noindent
\underline{\it Decay in $L^\infty$}. In order to upgrade the decay estimate to the $L^\infty$ norm, we will use again a suitable Moser's iteration, this time ``localized at infinity''. We keep on using the notation $u$ in place of $|u_j|$ and we recall that $r_0$ is as in \eqref{eq:r_0}. 
We fix $i=1,\dots,k$ and take $r_0+1\le r<R$, then in the equation \eqref{eigen1} we use the test function
\[
\varphi=u^\gamma\,\left(\eta^{(i)}_{r,R}\right)^2,
\]
where $\eta^{(i)}_{r,R}$ is the ``cut-off at infinity'' defined in Definition \ref{def:cutoff} and $\gamma\ge 1$. Observe that this is a feasible test function, thanks to the fact that $u\in L^\infty(\Omega)$, from the previous step of the proof. Thus, in particular $u^\gamma\in W^{1,2}_0(\Omega)$. From the equation, we get
\begin{equation}
\label{prospero}
\begin{split}
\frac{4\,\gamma}{(\gamma+1)^2}\,\int_{\mathbb{R}^N} \left|\nabla u^\frac{\gamma+1}{2}\right|^2\,\left(\eta^{(i)}_{r,R}\right)^2\,dx&+2\,\int_{\mathbb{R}^N} \left\langle \nabla u,\nabla \eta^{(i)}_{r,R}\right\rangle\,u^\gamma\,\eta^{(i)}_{r,R}\,dx\\
&\le \lambda_j(\Omega)\,\int_{\mathbb{R}^N} u^{\gamma+1}\,\left(\eta^{(i)}_{r,R}\right)^2\,dx.
\end{split}
\end{equation}
By using the Cauchy-Schwarz and Young inequalities, we have for $\delta>0$
\[
\begin{split}
2\,\int_{\mathbb{R}^N} \left\langle \nabla u,\nabla \eta^{(i)}_{r,R}\right\rangle\,u^\gamma\,\eta^{(i)}_{r,R}\,dx&\ge -\delta\, \int_{\mathbb{R}^N} |\nabla u|^2\,u^{\gamma-1}\,\left(\eta^{(i)}_{r,R}\right)^2\,dx-\frac{1}{\delta}\,\int_{\mathbb{R}^N} \left|\nabla \eta^{(i)}_{r,R}\right|^2\,u^{\gamma+1}\,dx\\
&=-\delta\,\frac{4}{(\gamma+1)^2}\,\int_{\mathbb{R}^N}\left|\nabla u^\frac{\gamma+1}{2}\right|^2\,\left(\eta^{(i)}_{r,R}\right)^2\,dx\\
&-\frac{1}{\delta}\,\int_{\mathbb{R}^N} \left|\nabla \eta^{(i)}_{r,R}\right|^2\,u^{\gamma+1}\,dx.
\end{split}
\]
We choose $\delta=\gamma/2$ and insert this estimate in \eqref{prospero}, this gives\footnote{Observe that both integrals on the right-hand side are finite for every $\gamma\ge 1$, thanks to the fact that $u\in L^2(\Omega)\cap L^\infty(\Omega)$.}
\[
\begin{split}
\frac{2\,\gamma}{(\gamma+1)^2}\,\int_{\mathbb{R}^N} \left|\nabla u^\frac{\gamma+1}{2}\right|^2\,\left(\eta^{(i)}_{r,R}\right)^2\,dx&\le \frac{2}{\gamma}\,\int_{\mathbb{R}^N} \left|\nabla \eta^{(i)}_{r,R}\right|^2\,u^{\gamma+1}\,dx\\
&+ \lambda_j(\Omega)\,\int_{\mathbb{R}^N} u^{\gamma+1}\,\left(\eta^{(i)}_{r,R}\right)^2\,dx.
\end{split}
\]
We sum on both sides the quantity
\[
\frac{2\,\gamma}{(\gamma+1)^2}\,\int_{\mathbb{R}^N} \left|\nabla \eta^{(i)}_{r,R}\right|^2\,u^{\gamma+1}\,dx,
\]
and use that 
\[
\int_{\mathbb{R}^N} \left|\nabla u^\frac{\gamma+1}{2}\right|^2\,\left(\eta^{(i)}_{r,R}\right)^2\,dx+\int_{\mathbb{R}^N} \left|\nabla \eta^{(i)}_{r,R}\right|^2\,u^{\gamma+1}\,dx\ge \frac{1}{2}\,\int_{\mathbb{R}^N} \left|\nabla \left(u^\frac{\gamma+1}{2}\,\eta^{(i)}_{r,R}\right)\right|^2\,dx.
\]
We thus obtain
\[
\begin{split}
\frac{\gamma}{(\gamma+1)^2}\,\,\int_{\mathbb{R}^N} \left|\nabla \left(u^\frac{\gamma+1}{2}\,\eta^{(i)}_{r,R}\right)\right|^2\,dx&\le \left[\frac{2}{\gamma}+\frac{2\,\gamma}{(\gamma+1)^2}\,\right]\,\int_{\mathbb{R}^N} \left|\nabla \eta^{(i)}_{r,R}\right|^2\,u^{\gamma+1}\,dx\\
&+ \lambda_j(\Omega)\,\int_{\mathbb{R}^N} u^{\gamma+1}\,\left(\eta^{(i)}_{r,R}\right)^2\,dx. 
\end{split}
\]
We now polish a little bit this estimate. We muplitly both sides by $(\gamma+1)^2/\gamma$ and use that
\[
\frac{(\gamma+1)^2}{\gamma^2}\le 4\qquad \mbox{ and }\qquad \frac{(\gamma+1)^2}{\gamma}\le 2\,(\gamma+1),\qquad \mbox{ for } \gamma\ge 1,
\]
so to get
\[
\begin{split}
\int_{\mathbb{R}^N} \left|\nabla \left(u^\frac{\gamma+1}{2}\,\eta^{(i)}_{r,R}\right)\right|^2\,dx&\le 10\,\int_{\mathbb{R}^N} \left|\nabla \eta^{(i)}_{r,R}\right|^2\,u^{\gamma+1}\,dx+2\,(\gamma+1)\, \lambda_j(\Omega)\,\int_{\mathbb{R}^N} u^{\gamma+1}\,\left(\eta^{(i)}_{r,R}\right)^2\,dx. 
\end{split}
\]
Finally, from this we can obtain
\begin{equation}
\label{sporca}
\begin{split}
\int_{\mathbb{R}^N} \left|\nabla \left(u^\frac{\gamma+1}{2}\,\eta^{(i)}_{r,R}\right)\right|^2\,dx&\le 2\,(\gamma+1)\,\Big(3+\lambda_j(\Omega)\Big)\,\int_{\mathbb{R}^N} \left[\left|\nabla \eta^{(i)}_{r,R}\right|^2+\left(\eta^{(i)}_{r,R}\right)^2\right]\,u^{\gamma+1}\,dx.
\end{split}
\end{equation}
We restrict for simplicity to the case $N\ge 3$. By using Sobolev's inequality in the left-hand side of \eqref{sporca}, we obtain
\begin{equation}
\label{sporca2}
\begin{split}
\left(\int_{\mathbb{R}^N}\left(u^\frac{\gamma+1}{2}\,\eta^{(i)}_{r,R}\right)^{2^*}\,dx\right)^\frac{2}{2^*}&\le \frac{2}{T_N}\,(\gamma+1)\,\Big(3+\lambda_j(\Omega)\Big)\,\int_{\mathbb{R}^N} \left[\left|\nabla \eta^{(i)}_{r,R}\right|^2+\left(\eta^{(i)}_{r,R}\right)^2\right]\,u^{\gamma+1}\,dx.
\end{split}
\end{equation}
We now use the properties of the cut-off functions, encoded by Definition \ref{def:cutoff}. From \eqref{sporca2}, we get for $i=1,\dots,k$ and $r_0\le r<R$
\begin{equation}
\label{sporca3}
\begin{split}
\left(\int_{\mathcal{C}_i\setminus Q_R^i}\left(u^\frac{\gamma+1}{2}\right)^{2^*}\,dx\right)^\frac{2}{2^*}&\le \frac{2}{T_N}\,(\gamma+1)\,\Big(3+\lambda_j(\Omega)\Big)\,\left[\frac{4}{(R-r)^2}+1\right]\int_{\mathcal{C}_i\setminus Q_r^i} u^{\gamma+1}\,dx.
\end{split}
\end{equation}
As in the previous step, we introduce the parameter $\vartheta=(\gamma+1)/2$ and rewrite \eqref{sporca3} as follows
\[
\begin{split}
\left(\int_{\mathcal{C}_i\setminus Q_R^i}u^{2^*\vartheta}\,dx\right)^\frac{1}{2^*\vartheta}&\le \left(\frac{4}{T_N}\,\Big(3+\lambda_j(\Omega)\Big)\right)^\frac{1}{2\vartheta}\vartheta^\frac{1}{2\vartheta}\,\left[\frac{4}{(R-r)^2}+1\right]^\frac{1}{2\vartheta}\,\left(\int_{\mathcal{C}_i\setminus Q_r^i} u^{2\vartheta}\,dx\right)^\frac{1}{2\vartheta}.
\end{split}
\]
We take $R_0\ge r_0$ and use the previous estimate with the choices
\[
r=r_j:=(R_0+2)-\frac{1}{2^j},\qquad R=r_{j+1},\qquad \mbox{ for } j\in\mathbb{N},
\]
and $\vartheta=\vartheta_j$, where the latter is again the sequence defined in \eqref{sequenza}. We thus get
\[
\begin{split}
\left(\int_{\mathcal{C}_i\setminus Q_{r_{j+1}}^i}u^{2 \vartheta_{j+1}}\,dx\right)^\frac{1}{2\vartheta_{j+1}}&\le \left(\frac{4}{T_N}\,\Big(3+\lambda_j(\Omega)\Big)\right)^\frac{1}{2\vartheta_j}\vartheta_j^\frac{1}{2\vartheta_j}\,\left(16\cdot 4^j+1\right)^\frac{1}{2\vartheta_j}\\
&\times\,\left(\int_{\mathcal{C}_i\setminus Q_{r_j}^i} u^{2\vartheta_j}\,dx\right)^\frac{1}{2\vartheta_j}.
\end{split}
\]
We iterate this estimate, starting from $j=0$: after $n$ steps, we get
\begin{equation}
\label{ready??}
\begin{split}
\left(\int_{\mathcal{C}_i\setminus Q_{r_n}^i}u^{2 \vartheta_{n}}\,dx\right)^\frac{1}{2\vartheta_{n}}&\le \left(\frac{4}{T_N}\,\Big(3+\lambda_j(\Omega)\Big)\right)^{\frac{1}{2}\sum\limits_{j=0}^{n-1}\frac{1}{\vartheta_j}}\,\prod_{j=0}^{n-1}\left\{\frac{1}{\vartheta_j}\,(16\cdot 4^j+1)\right\}^\frac{1}{2\vartheta_j}\\
&\times\,\left(\int_{\mathcal{C}_i\setminus Q_{R_0+1}^i} u^{2}\,dx\right)^\frac{1}{2}.
\end{split}
\end{equation}
We now pass to the limit as $n$ goes to $\infty$ in \eqref{ready??}. By recalling \eqref{limiteexponents} and observing that
\[
\lim_{n\to\infty}\prod_{j=0}^{n-1}\left\{\frac{1}{\vartheta_j}\,(16\cdot 4^j+1)\right\}^\frac{1}{2\vartheta_j}=:C_N<+\infty,\qquad \lim_{n\to\infty} r_n=R_0+2,
\]
so to get
\[
\begin{split}
\|u\|_{L^\infty(\mathcal{C}_i\setminus Q_{R_0+2}^i)}&\le C\,\left(\frac{3+\lambda_j(\Omega)}{T_N}\right)^\frac{N}{4}\,\left(\int_{\mathcal{C}_i\setminus Q_{R_0+1}^i} u^2\,dx\right)^\frac{1}{2}\\
&\le C\,\left(\frac{3+\lambda_j(\Omega)}{T_N}\right)^\frac{N}{4}\,\left(\int_{\Omega\setminus \Omega_{R_0+1}} u^2\,dx\right)^\frac{1}{2}.
\end{split}
\]
Since this holds for every $i=1,\dots,k$, we finally get for every $R_0\ge r_0$
\[
\|u\|_{L^\infty(\Omega\setminus\Omega_{R_0+2})}\le C\,\left(\frac{3+\lambda_j(\Omega)}{T_N}\right)^\frac{N}{4}\,\left(\int_{\Omega\setminus \Omega_{R_0+1}} u^2\,dx\right)^\frac{1}{2}.
\]
This is now enough to conclude the proof for $N\geq 3$. For $N=2$ we may conclude, analogously to the previous step, by using Ladyzhenskaya's inequality \eqref{eq:lade} in place of the Sobolev inequality.
\end{proof}
\begin{remark}[Quality of the constants]\label{rmk:constants}
By inspecting the proof of the previous result, we easily see that both the base $\beta_j$ and the constant $C_{1,j}$ depend on $\lambda_j(\Omega)$ and on the crucial gap
\[
\mathcal{E}(\Omega)-\lambda_j(\Omega)>0.
\]
In particular, we have
\[
\beta_j\nearrow 1 \quad \mbox{ and }\quad C_{1,j}\nearrow 1,\qquad \mbox{ as } \mathcal{E}(\Omega)-\lambda_j(\Omega)\searrow 0.
\]
The estimate \eqref{stimalinfa} is classical, here the constant $C_3$ only depends on the dimension $N$, through the sharp constant in the Sobolev inequality. Finally, the constant $C_3$ depends on $N$, the first eigenvalue $\lambda_1(\Omega)$ and the constant $C_1$.
\end{remark}

\section{Defect of compactness along the tubes}
\label{sec:6}

In this section, we analyze what happens for energy levels above the threshold $\mathcal{E}(\Omega)$. We will see that compactness is completely lost, in a suitable sense. We will need the definition of {\it constrained Palais-Smale sequence}, given in Definition \ref{def:PS}.
Then the main result of this section is the following
\begin{proposition}
\label{prop:PS}
Let $\Omega\subseteq\mathbb{R}^N$ be an open set satisfying Assumptions \ref{ass:Omega}. Then every $\lambda$ such that
\[
\lambda\ge \mathcal{E}(\Omega)=\min\Big\{\lambda_1(E_1),\dots,\lambda_1(E_k)\Big\},
\] 
admits a constrained Palais-Smale sequence which is weakly converging to $0$.
\end{proposition}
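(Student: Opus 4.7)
The strategy is to construct the Palais--Smale sequence by hand inside a single tube, mimicking the separation of variables argument. Choose $i\in\{1,\dots,k\}$ with $\lambda_1(E_i)=\mathcal{E}(\Omega)$ and fix a first Dirichlet eigenfunction $\psi_i\in W^{1,2}_0(E_i)$ with $\|\psi_i\|_{L^2(E_i)}=1$. Since $\lambda\ge \lambda_1(E_i)$, the number $\mu:=\lambda-\lambda_1(E_i)\ge 0$ is well-defined and the profile
\[
U(x',x_N):=\psi_i(x')\,\cos\!\big(\sqrt{\mu}\,x_N\big)
\]
solves $-\Delta U=\lambda\,U$ in the full straight cylinder $E_i\times\mathbb{R}$. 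I use the cosine (rather than sine) so that $U$ does not vanish identically in the degenerate case $\mu=0$.

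\textbf{Cut-off, tubular localization, normalization.} I take sequences $R_n\to+\infty$ with $R_n>\varrho_0$ and $L_n\to+\infty$, and pick $\eta_n\in C^\infty_c((R_n,R_n+L_n))$ with $0\le\eta_n\le 1$, $\eta_n\equiv 1$ on $[R_n+1,R_n+L_n-1]$ and $|\eta_n'|+|\eta_n''|\le C$ uniformly in $n$. Working in the tube coordinates induced by $\mathbf{R}_i$ and $\mathbf{t}_i$, I set
\[
\widetilde\varphi_n(x',x_N):=\eta_n(x_N)\,U(x',x_N),
\]
extended by $0$ outside $E_i\times[R_n,R_n+L_n]$. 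Assumption (A3) together with $R_n>\varrho_0$ guarantees that the support lies in $\mathcal{C}_i\setminus\bigcup_{j\ne i}\mathcal{C}_j$, so $\widetilde\varphi_n\in W^{1,2}_0(\Omega)$. Define $\varphi_n:=\widetilde\varphi_n/\|\widetilde\varphi_n\|_{L^2(\Omega)}$, which yields condition (1) of Definition \ref{def:PS}.

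\textbf{Verification of the PS conditions.} Using $-\Delta_{x'}\psi_i=\lambda_1(E_i)\psi_i$ and $\partial_{x_Nx_N}^2\cos(\sqrt{\mu}\,x_N)=-\mu\cos(\sqrt{\mu}\,x_N)$, the bulk terms cancel and a direct computation gives
\[
-\Delta\widetilde\varphi_n-\lambda\,\widetilde\varphi_n=-\psi_i(x')\,\Big(\eta_n''(x_N)\cos(\sqrt{\mu}\,x_N)-2\sqrt{\mu}\,\eta_n'(x_N)\sin(\sqrt{\mu}\,x_N)\Big),
\]
which is supported only in the two end strips of $\eta_n$ of length $1$. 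Hence its $L^2(\Omega)$ norm is bounded by an absolute constant (using $\|\psi_i\|_{L^2(E_i)}=1$ and the uniform bounds on $\eta_n',\eta_n''$), while $\eta_n\equiv 1$ on $[R_n+1,R_n+L_n-1]$ yields $\|\widetilde\varphi_n\|_{L^2(\Omega)}^2\ge c_\mu(L_n-2)$ for some $c_\mu>0$. After normalization, $\|-\Delta\varphi_n-\lambda\,\varphi_n\|_{L^2(\Omega)}=O(L_n^{-1/2})\to 0$; since $L^2(\Omega)\hookrightarrow W^{-1,2}(\Omega)$ continuously, condition (3) follows. For (2), integration by parts gives
\[
\int_\Omega|\nabla\varphi_n|^2\,dx=\lambda\,\|\varphi_n\|_{L^2}^2+\int_\Omega \varphi_n\,\big(-\Delta\varphi_n-\lambda\,\varphi_n\big)\,dx=\lambda+O(L_n^{-1/2})\longrightarrow\lambda.
\]

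\textbf{Weak convergence to $0$ and main obstacle.} Since $\mathrm{supp}(\varphi_n)\subseteq \mathcal{C}_i\setminus B_{R_n}(0)$ and $R_n\to\infty$, for every $\phi\in C^\infty_c(\Omega)$ the supports are eventually disjoint, so $\int_\Omega\varphi_n\,\phi\,dx=0$ for $n$ large. Combined with $\|\varphi_n\|_{L^2(\Omega)}=1$ and density of $C^\infty_c(\Omega)$ in $L^2(\Omega)$, this yields $\varphi_n\rightharpoonup 0$ in $L^2(\Omega)$. The only delicate point is the bookkeeping of the cut-off: one has to ensure that $\|\widetilde\varphi_n\|_{L^2}^2$ grows linearly in $L_n$ while the commutator error stays $O(1)$, and to treat the degenerate case $\mu=0$, handled by the choice of $\cos$ in place of $\sin$. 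These $\varphi_n$ are in fact variational avatars of Weyl singular sequences travelling to infinity along the tube $\mathcal{C}_i$.
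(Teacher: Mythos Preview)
Your proof is correct and follows the same overall strategy as the paper: build a separated-variables ansatz $\psi_i(x')\,w(x_N)$ inside a single tube, normalize, and show the three Palais--Smale conditions together with weak convergence to zero. The differences are purely technical. The paper takes $w=\sin\big(\sqrt{\lambda_n-\lambda_1(E_1)}\,(x_N-r_0)\big)$ on an interval $(r_0,r_0+R_n)$ of carefully chosen length, with a shifted level $\lambda_n=\lambda+1/n$ so that $\lambda_n>\lambda_1(E_1)$ even when $\lambda=\mathcal{E}(\Omega)$; no smooth cut-off is used, the function is an exact eigenfunction on the box, and the PS error arises solely from the two boundary traces via the Divergence Theorem. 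You instead fix the true level $\lambda$, take $w=\cos(\sqrt{\mu}\,x_N)$ (which handles $\mu=0$ gracefully), localize with a smooth cut-off $\eta_n$, and obtain the PS error as a commutator supported on the two transition strips; estimating it in $L^2$ and using $L^2\hookrightarrow W^{-1,2}$ is clean. Your choice to also send the left endpoint $R_n\to\infty$ makes the weak convergence to $0$ completely transparent (supports eventually disjoint from any compact set), whereas in the paper the supports stay anchored at $r_0$ and weak convergence comes from the decaying normalization factor $\sqrt{2/R_n}$. Both constructions are standard realizations of singular Weyl sequences; yours is arguably the more textbook route.
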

\begin{proof}
We give an explicit construction of such a sequence. Without loss of generality, we can assume that 
\[
\mathcal{E}(\Omega)=\lambda_1(E_1).
\]
Let us indicate by $\psi_1\in W^{1,2}_0(E_1)$ a first positive eigenfunction of $E_1$. For simplicity, we take it normalized, i.e. with unit $L^2$ norm. Up to a rigid movement, we can suppose that 
\[
E_1\subseteq \mathbb{R}^{N-1}\times\{0\}\qquad \mbox{ and }\qquad \mathcal{C}_1=E_1\times(0,+\infty).
\]
By using the notation $x=(x',x_N)\in\mathbb{R}^{N-1}\times\mathbb{R}$, for $n\in\mathbb{N}\setminus\{0\}$ we set $\lambda_n=\lambda+1/n$ and we take the function
\[
\varphi_n=\psi_1(x')\,\zeta_n(x_N),
\]
where 
\[
\zeta_n(t)=\left\{\begin{array}{rl}
\sin\Big(\sqrt{\lambda_n-\lambda_1(E_1)}\,(t-r_0)\Big),& \mbox{ if } r_0<t<r_{0}+R_n:=r_0+\dfrac{2\,n\,\pi}{\sqrt{\lambda_n-\lambda_1(E_1)}},\\
&\\
0,& \mbox{ otherwise},
\end{array}
\right.
\]
and $r_0=r_0(\Omega)$ is the same as in Subsection \ref{sec:3.1}.
\begin{figure}
\includegraphics[scale=.25]{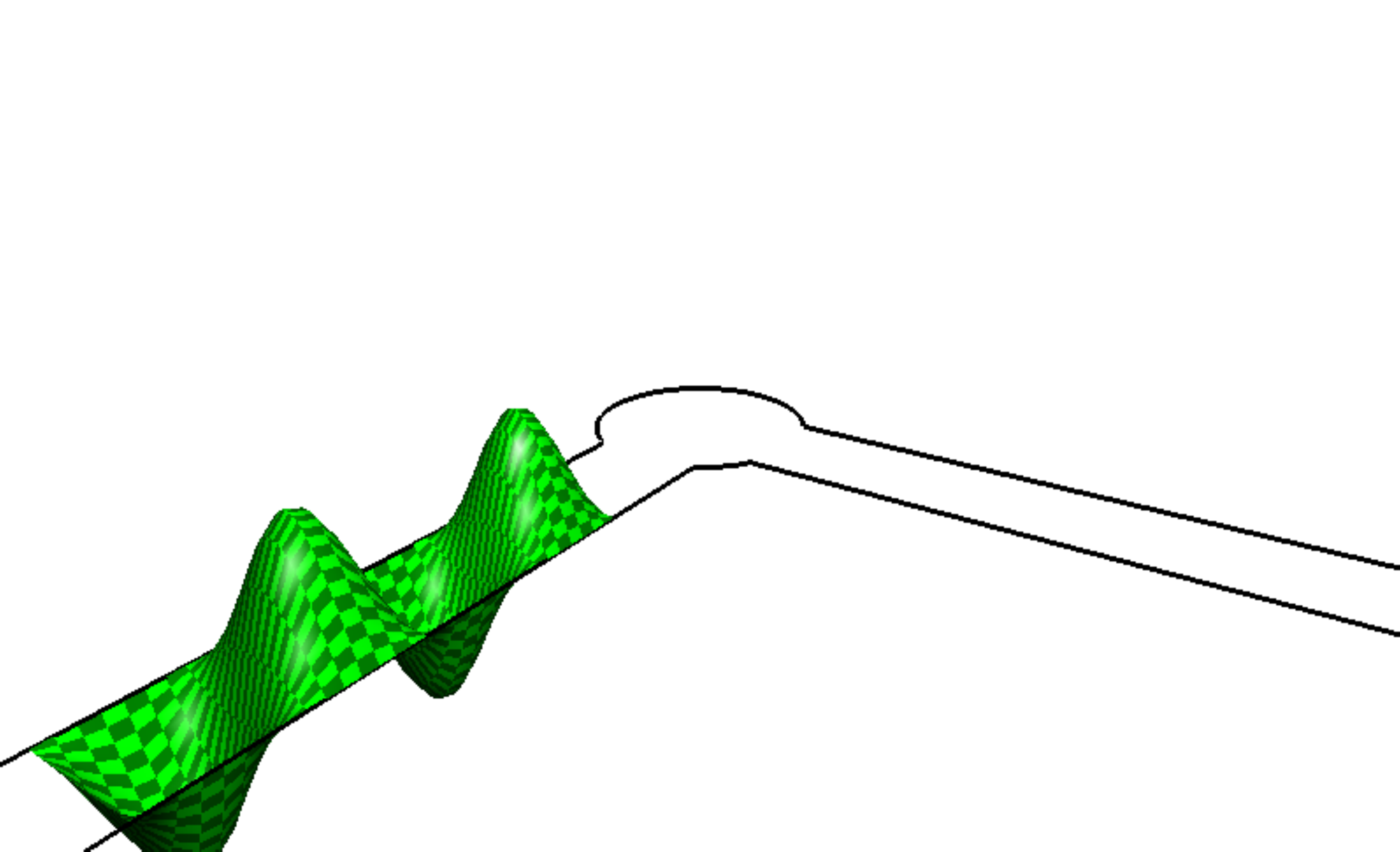}
\caption{The construction of the sequence $\varphi_n$ in the proof of Proposition \ref{prop:PS}.}
\end{figure}
The function $\varphi_n$ is then extended by $0$ to the remainder of $\Omega$, i.e. it is supported in the cylindrical set $\mathcal{C}_1$.
By construction, we have that this weakly solves 
\[
-\Delta \varphi_n=\lambda_n\,\varphi_n,\qquad \mbox{ in } E_1\times \left(r_0,r_0+R_n\right).
\]
Its $L^2$ norm is given by 
\[
\begin{split}
\int_\Omega |\varphi_n|^2\,dx&=\left(\int_{E_1} |\psi_1|^2\,dx'\right)\left(\int_{r_0}^{r_0+R_n} |\zeta_n|^2\,dx_N\right)=\left(\int_0^{R_n} \left(\sin\Big(\sqrt{\lambda_n-\lambda_1(E_1)}\,\tau\Big)\right)^2d\tau\right)=\frac{R_n}{2}.
\end{split}
\]
We then set 
\[
U_n(x)=\frac{\varphi_n(x)}{\|\varphi_n\|_{L^2(\Omega)}}=\sqrt{\frac{2}{R_n}}\,\psi_1(x')\,\zeta_n(x_N).
\]
By construction, it is easily seen that it converges to zero weakly in $L^2(\Omega)$, as well as in $W^{1,2}_0(\Omega)$.
\par
We need to verify that this is a Palais-Smale sequence at the level $\lambda$.
Its Dirichlet integral is given by
\[
\begin{split}
\int_\Omega |\nabla U_n|^2\,dx&=\frac{2}{R_n}\,\left(\int_{E_1} |\nabla' \psi_1|^2\,dx'\right)\,\left(\int_{r_0}^{r_0+R_n} |\zeta_n|^2\,dx_N\right)\\
&+\frac{2}{R_n}\,\left(\int_{E_1} |\psi_1|^2\,dx'\right)\,\left(\int_{r_0}^{r_0+R_n} |\zeta'_n|^2\,dx_N\right)\\
&=\lambda_1(E_1)+\frac{2}{R_n}\,(\lambda_n-\lambda_1(E_1))\int_0^{R_n} \left(\cos\Big(\sqrt{\lambda_n-\lambda_1(E_1)}\,\tau\Big)\right)^2\,d\tau=\lambda_n=\lambda+\frac{1}{n},
\end{split}
\]
where we used that 
\[
\begin{split}
\int_0^{R_n} &\left(\cos\Big(\sqrt{\lambda_n-\lambda_1(E_1)}\,\tau\Big)\right)^2\,d\tau\\
&=\int_0^{R_n} \left(\sin\Big(\sqrt{\lambda_n-\lambda_1(E_1)}\,\tau\Big)\right)^2\,d\tau=\frac{R_n}{2}.
\end{split}
\]
Thus, we obtained that $\{U_n\}_{n\in\mathbb{N}\setminus\{0\}}$ satisfies properties (1) and (2) in Definition \ref{def:PS}. In order to verify point (3), we take $\varphi\in C^\infty_0(\Omega)$. We first observe that by standard Elliptic regularity, we have $\psi_1\in C^\infty(E_1)$ (see for example \cite[Corollary 8.11]{GT}). Thus, in particular, we get that
\[
U_n\in C^2\Big(\overline{\mathcal{O}}\times\left[r_0,r_0+R_n\right]\Big),
\]
for every $\mathcal{O}$ compactly contained in $E_1$. We have enough regularity to justify the following identities: by using the equation satisfied by $U_n$ and the Divergence Theorem, we get
\[
\begin{split}
\int_\Omega \langle \nabla U_n,\nabla \varphi\rangle\,dx&=\int_{E_1}\int_{r_0}^{r_0+R_n} \langle \nabla U_n,\nabla \varphi\rangle\,dx'\,dx_N\\
&=\int_{E_1}\int_{r_0}^{r_0+R_n} \mathrm{div} (\nabla U_n\, \varphi)\,dx'\,dx_N
-\int_{E_1}\int_{r_0}^{r_0+R_n} \Delta U_n\, \varphi\,dx'\,dx_N\\
&=\int_{E_1} \frac{\partial U_n}{\partial x_N}\left(x',r_0+R_n\right)\,\varphi\left(x',r_0+R_n\right)\,dx'\\
&-\int_{E_1} \frac{\partial U_n}{\partial x_N}(x',r_0)\,\varphi(x',r_0)\,dx'+\lambda_n\,\int_\Omega U_n\,\varphi\,dx.
\end{split}
\]
With simple manipulations and by recalling that $\lambda_n=\lambda+1/n$, we get
\[
\begin{split}
\left|\int_\Omega \langle \nabla U_n,\nabla \varphi\rangle\,dx-\lambda\,\int_\Omega U_n\,\varphi\,dx\right|&\le \frac{1}{n}\,\|U_n\|_{L^2(\Omega)}\,\|\varphi\|_{L^2(\Omega)}\\
&+ \left\|\frac{\partial U_n}{\partial x_N}\right\|_{L^2(E_1\times\{r_0\})}\,\|\varphi\|_{L^2(E_1\times \{r_0\})}\\
&+\left\|\frac{\partial U_n}{\partial x_N}\right\|_{L^2\left(E_1\times\{r_0+R_n\}\right)}\,\|\varphi\|_{L^2\left(E_1\times \{r_0+R_n\}\right)}.
\end{split}
\]
By using the properties\footnote{Observe that by construction we have
\[
\frac{\partial U_n}{\partial x_N}(x',r_0)=\sqrt{\frac{2}{R_n}}\,\psi_1(x')\,\zeta'_n(r_0)=\sqrt{\lambda_n-\lambda_1(E_1)}\,\sqrt{\frac{2}{R_n}}\,\psi_1(x'),
\]
and
\[
\frac{\partial U_n}{\partial x_N}(x',r_0+R_n)=\sqrt{\frac{2}{R_n}}\,\psi_1(x')\,\zeta'_n(r_0+R_n)=\sqrt{\lambda_n-\lambda_1(E_1)}\,\sqrt{\frac{2}{R_n}}\,\psi_1(x').
\]} of $U_n$ and the trace inequality for the trial function $\varphi$, we then obtain
\[
\left|\int_\Omega \langle \nabla U_n,\nabla \varphi\rangle\,dx-\lambda\,\int_\Omega U_n\,\varphi\,dx\right|\le \frac{C}{\sqrt{n}}\,\|\varphi\|_{W^{1,2}(\Omega)},
\]
for some $C$ independent of both $n$ and $\varphi$. This finally shows that $\{U_n\}_{n\in\mathbb{N}\setminus\{0\}}$ is a P.-S. sequence at the level $\lambda$. 
\end{proof}

\begin{remark}
\label{remark:essential}
As recalled in the Introduction, a constrained Palais-Smale sequence weakly converging to $0$ can be seen as a variational reformulation of the concept of {\it singular Weyl sequence}, appearing in the Spectral Theory of self-adjoint operators, see for example \cite[Chapter 9, Section 2]{BS} and \cite[Chapter 6, Section 4]{Te}. With this in mind, according to \cite[Theorem 9.2.2]{BS}, the previous result shows that every $\lambda\ge \mathcal{E}(\Omega)$ belongs to the {\it essential spectrum} of the Dirichlet-Laplacian on $\Omega$.
\end{remark}

\begin{remark}
Under the assumptions of the previous result, in general it is not true that {\it every} constrained Palais-Smale sequence at a level $\lambda\ge \mathcal{E}(\Omega)$ weakly converges to $0$. For example, by taking
\[
\Omega=\Big((-1,1)\times(-1,1)\Big) \cup  \Big((2,+\infty)\times (-1,1)\Big),
\]
we see that this set admits an eigenfunction $u\in W^{1,2}_0(\Omega)$, given by the first eigenfunction of $(-1,1)\times(-1,1)$, i.e. associated to the eigenvalue
\[
\lambda=\lambda_1((-1,1)\times(-1,1))=\frac{\pi^2}{2}>\frac{\pi^2}{4}=\lambda_1((2,+\infty)\times (-1,1))=\mathcal{E}(\Omega).
\]
Thus the constant sequence $u_1^n=u$ is a Palais-Smale sequence at a level $\lambda$ larger than $\mathcal{E}(\Omega)$, but of course it is not weakly converging to $0$.
\end{remark}

\section{Singular perturbation of Hersch's pipe}
\label{sec:7}

We indicate by $\mathbf{H}$ the set of Example \ref{exa:pipe}.
We define $\mathrm{I}:=(-1,1)$ and, for $n\in\mathbb{N}\setminus\{0\}$, $0<\varepsilon<1/(2\,n)$ and $i=0,\dots, n$, we denote
\begin{equation*}
	\Sigma^\varepsilon_i:=2+\frac{i}{n}+\varepsilon\, \mathrm{I}=\left(2+\frac{i}{n}-\varepsilon,2+\frac{i}{n}+\varepsilon\right)\subseteq\mathbb{R}\qquad \text{and} \qquad T^\varepsilon_i:=\Sigma_i^\varepsilon\times[1,2).
\end{equation*}
We also denote
\begin{equation*}
	\Gamma^\varepsilon_n:=\bigcup_{i=1}^n(\Sigma^\varepsilon_i\times\{1\})\qquad\text{ and }\qquad	\mathbf{H}^\varepsilon_n:=\mathbf{H}\cup \bigcup_{i=1}^nT^\varepsilon_i.
\end{equation*}
We can observe that the pair $(\mathbf{H},\Gamma^\varepsilon_n)$ satisfies Assumptions \ref{ass:domain} in Appendix \ref{sec:A} with $\Omega=\mathbf{H}$ and $\Sigma=\Gamma^\varepsilon_n$. Accordingly, with the notations of Appendix \ref{sec:A} we have $\Omega_\Sigma=\mathbf{H}^\varepsilon_n$.
\begin{figure}
\includegraphics[scale=.2]{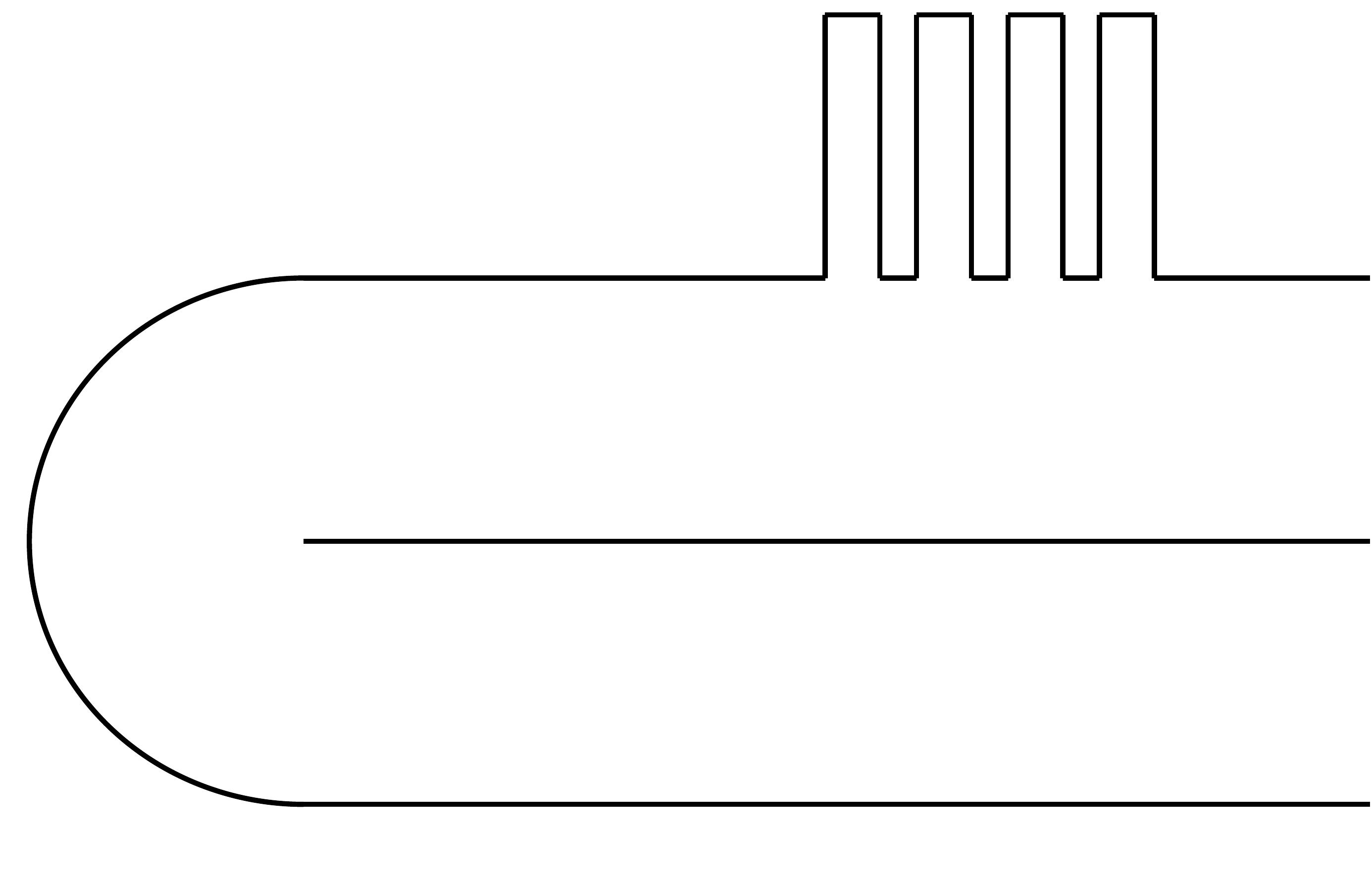}
\caption{The construction of $\mathbf{H}_n^\varepsilon$, with $n=3$: we add to $\mathbf{H}$ an array of $n+1=4$ equally spaced vertical tubes. Each tube has height $1$ and section $2\,\varepsilon$, with $\varepsilon\ll 1$.}
\end{figure}
\vskip.2cm\noindent
The following simple regularity result is not optimal, but it will be largely sufficient for our needs. It asserts that the first eigenfunctions of $\mathbf{H}$ is $C^2$ up to the boundary, in the flat upper part. 
\begin{lemma}
\label{lm:regolaritaminchiona}
Let $u_1\in \mathcal{S}_2(\mathbf{H})$ be the first positive eigenfunction of $\mathbf{H}$. Then for every $0<\tau<L$ we have 
\[
u_1\in C^2\Big(\left[\tau,L\right]\times[\tau,1]\Big).
\]
In particular, the function $\partial u_1/\partial x_2$
has a $C^1$ trace on $(0,+\infty)\times \{1\}$. Moreover, for every $U\in W^{1,2}_0(\mathbf{H}_n^\varepsilon)$ the following identity holds
\[
\int_{\mathbf{H}} \langle \nabla u_1,\nabla U\rangle\,dx=\lambda_1(\mathbf{H})\,\int_{\mathbf{H}} u_1\,U\,dx+\int_{\Gamma^\varepsilon_n} \frac{\partial u_1}{\partial x_2}\,U\,d\mathcal{H}^1.
\]
\end{lemma}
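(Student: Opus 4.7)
The plan is to separate the statement into its three assertions: (i) $C^2$ regularity of $u_1$ on rectangles $[\tau,L]\times[\tau,1]$; (ii) the existence and $C^1$ regularity of the trace of $\partial u_1/\partial x_2$ on $(0,+\infty)\times\{1\}$; (iii) the Green-type identity. The first two are purely local; the third is obtained by splitting $U$ with a cutoff so that one piece sees only the weak eigenvalue equation and the other sees only a classical divergence theorem on a smooth sub-domain.

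For (i), I would observe that the open half-strip $S:=(0,+\infty)\times(0,1)$ is contained in $\mathbf{H}$, that its flat upper boundary $(0,+\infty)\times\{1\}$ is a $C^\infty$ portion of $\partial\mathbf{H}$ on which $u_1$ has zero trace, and that $u_1$ satisfies $-\Delta u_1=\lambda_1(\mathbf{H})u_1$ weakly in $\mathbf{H}$ with $u_1\in L^\infty(\mathbf{H})$ by Theorem \ref{teo:eigenproperties}. Combining interior Schauder estimates with boundary Schauder estimates at the smooth flat piece (e.g.\ \cite[Ch.\ 6]{GT}) yields $u_1\in C^{2,\alpha}_{\mathrm{loc}}(\overline{S}\setminus\{x_1=0\})$. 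For fixed $0<\tau<L$ (with $\tau<1$), the compact rectangle $[\tau,L]\times[\tau,1]$ is contained in this region, giving $u_1\in C^2([\tau,L]\times[\tau,1])$. Assertion (ii) is an immediate consequence, applied with $\tau$ arbitrarily small and $L$ arbitrarily large.

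For (iii), choose a cutoff $\chi\in C^\infty_c(\mathbb{R}^2)$ equal to $1$ on a neighborhood of $\overline{\Gamma^\varepsilon_n}\subseteq[2,3]\times\{1\}$ and supported in a slab $S_0:=(2-\delta,3+\delta)\times(1/2,2)$, where $\delta>0$ is small enough that $\overline{S_0}\cap\overline{\mathbf{H}}$ lies in the $C^2$-region from step (i). Split $U=(1-\chi)U+\chi U$ for $U\in W^{1,2}_0(\mathbf{H}^\varepsilon_n)$. The piece $(1-\chi)U$, restricted to $\mathbf{H}$, has trace zero on the whole of $\partial\mathbf{H}$: it vanishes on $\Gamma^\varepsilon_n$ by construction of $\chi$, while on the rest of $\partial\mathbf{H}$ it inherits the Dirichlet condition of $U\in W^{1,2}_0(\mathbf{H}^\varepsilon_n)$. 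Hence $(1-\chi)U\in W^{1,2}_0(\mathbf{H})$ and the weak formulation of the eigenvalue problem gives
\[
\int_{\mathbf{H}}\langle\nabla u_1,\nabla[(1-\chi)U]\rangle\,dx=\lambda_1(\mathbf{H})\int_{\mathbf{H}} u_1\,(1-\chi)U\,dx.
\]
For the piece $\chi U$, which is supported in $S_0\cap\mathbf{H}^\varepsilon_n$, I would approximate it by functions $\varphi_m\in C^\infty_c(S_0\cap\mathbf{H}^\varepsilon_n)$ in the $W^{1,2}$ norm and apply the classical Green identity on the smooth bounded domain $S_0\cap\mathbf{H}$, where $u_1$ is $C^2$: this produces $\int(-\Delta u_1)\varphi_m\,dx+\int_{\partial(S_0\cap\mathbf{H})}(\partial u_1/\partial\nu)\varphi_m\,d\mathcal{H}^1$. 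The pointwise equation $-\Delta u_1=\lambda_1(\mathbf{H})u_1$ takes care of the bulk term; as for the boundary, $\varphi_m$ vanishes on the internal sides of $S_0$ (they lie in the open set $\mathbf{H}^\varepsilon_n$) and on the pieces of $(0,+\infty)\times\{1\}$ outside $\Gamma^\varepsilon_n$ (which belong to $\partial\mathbf{H}^\varepsilon_n$), so only $\Gamma^\varepsilon_n$ contributes, with outer normal $(0,1)$ to $\mathbf{H}$. Continuity of the trace $W^{1,2}(S_0\cap\mathbf{H})\to L^2(\Gamma^\varepsilon_n)$ lets me pass $m\to\infty$ and replace $\varphi_m$ by $\chi U$. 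Summing the two contributions yields the stated identity.

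The main obstacle I expect is the careful bookkeeping in step (iii): one must check that $(1-\chi)U\big|_{\mathbf{H}}$ really lies in $W^{1,2}_0(\mathbf{H})$ in spite of the irregular boundary of $\mathbf{H}$ (the slit $[0,1]\times\{0\}$ and the circular arc), and simultaneously that $\chi U$ approximated by $\varphi_m\in C^\infty_c(\mathbf{H}^\varepsilon_n)$ only produces boundary contributions on $\Gamma^\varepsilon_n$. Both points hinge on keeping the support of $\chi$ well inside the smooth flat region identified in step (i), so that the pathological pieces of $\partial\mathbf{H}$ and $\partial\mathbf{H}^\varepsilon_n$ play no role.
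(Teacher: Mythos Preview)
Your argument is correct, but it diverges from the paper's in two places, and in both the paper takes a shorter path.

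For the regularity step (i), the paper does not invoke boundary Schauder theory at all. Instead it performs the odd reflection of $u_1$ across the flat piece $\{x_2=1\}$: extending $u_1$ from $(\tau/2,+\infty)\times(0,1)$ to $(\tau/2,+\infty)\times(0,2)$ by $u_1(x_1,2-x_2):=-u_1(x_1,x_2)$ produces a weak solution of $-\Delta u=\lambda_1(\mathbf H)u$ on the larger open set, so a single appeal to \emph{interior} elliptic regularity (\cite[Corollary~8.11]{GT}) gives $u_1\in C^\infty$ on any compact subset, in particular on $[\tau,L]\times[\tau,1]$. This bypasses the bootstrap through $L^p$ and H\"older classes and the need to quote boundary estimates.

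For the identity (iii), the paper avoids the cutoff decomposition entirely. It first takes $U\in C^\infty_0(\mathbf H^\varepsilon_n)$; compact support in $\mathbf H^\varepsilon_n$ means that the restriction of the support to $\overline{\mathbf H}$ lies in a compact set $\mathbf H^\varepsilon_n(\delta)\cap\{x_2\le 1\}$ which stays at positive distance from $\partial\mathbf H\setminus\Gamma^\varepsilon_n$ and touches $\partial\mathbf H$ only on $\overline{\Gamma^\varepsilon_n}$, where step~(i) supplies $C^2$ regularity. A single application of the Divergence Theorem on this set gives the formula for smooth $U$, and density passes it to $W^{1,2}_0(\mathbf H^\varepsilon_n)$. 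Your splitting $U=(1-\chi)U+\chi U$ achieves the same end, but forces you to verify that $(1-\chi)U|_{\mathbf H}\in W^{1,2}_0(\mathbf H)$ despite the slit and the possible support of $U$ inside the tubes---exactly the bookkeeping you flag as the main obstacle. The paper's route simply sidesteps it.
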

\begin{proof}
For $\tau>0$, we set $A_\tau=(\tau/2,+\infty)\times(0,1)$ and extend $u_1$ to $A'_\tau=(\tau/2,+\infty)\times(0,2)$ by odd reflection with respect to the second variable. By construction, we have that this is a weak solution of 
\[
-\Delta u=\lambda_1(\mathbf{H})\,u,\qquad \mbox{ in } A'_\tau.
\]
Let us still denote it by $u_1$, for simplicity. 
By standard Elliptic Regularity (see \cite[Corollary 8.11]{GT}), we have that $u_1$ is actually $C^\infty(E)$ for every open set $E$ compactly contained in $A'_\tau$. This shows in particular that $u_1$ is $C^2$ on $[\tau,L]\times[\tau,1]$.
\vskip.2cm\noindent
Let us now take $U\in C^\infty_0(\mathbf{H}^\varepsilon_n)$, we observe that this is not a feasible test function for the equation of $u_1$. Since $U$ is compactly supported, there exists $0<\delta\ll 1$ such that its support is contained in
\[
\mathbf{H}_n^\varepsilon(\delta):=\Big\{x\in\mathbf{H}\, :\, \mathrm{dist}(x,\partial\mathbf{H}^\varepsilon_n)\ge \delta,\, x_1\le \frac{1}{\delta}\Big\}.
\]
By the previous point and the fact that $u_1\in C^\infty(\mathbf{H})$, we know that 
\[
u_1\in C^2\Big(\mathbf{H}_n^\varepsilon(\delta)\cap\{(x_1,x_2)\, :\, x_2\le 1\}\Big).
\]
In particular $u_1$ solves in classical sense the equation, on this set. By the Divergence Theorem, we thus have
\[
\begin{split}
\lambda_1(\mathbf{H})\,\int_{\mathbf{H}} u_1\,U\,dx=\lambda_1(\mathbf{H})\,\int_{\mathbf{H}_n^\varepsilon(\delta)\cap\{ x_2\le 1\}} u_1\,U\,dx&=-\int_{\mathbf{H}_n^\varepsilon(\delta)\cap\{ x_2\le 1\}} \Delta u_1\,U\,dx\\
&=-\int_{\mathbf{H}_n^\varepsilon(\delta)\cap\{ x_2\le 1\}} \mathrm{div}(\nabla u_1\,U)\,dx\\
&+\int_{\mathbf{H}_n^\varepsilon(\delta)\cap\{ x_2\le 1\}} \langle \nabla u_1,\nabla U\rangle\,dx\\
&=-\int_{\Gamma^\varepsilon_n} \frac{\partial u}{\partial x_2}\,U\,d\mathcal{H}^1+\int_{\mathbf{H}} \langle \nabla u_1,\nabla U\rangle\,dx.
\end{split}
\] 
This is the claimed identity, for a test function $U\in C^\infty_0(\mathbf{H}^\varepsilon_n)$. By a density argument (and using the continuity of the trace operator), we can then conclude that the identity holds for test functions in $W^{1,2}_0(\mathbf{H}^\varepsilon_n)$, as well.
\end{proof}
We now present some results, aimed at giving an asymptotic estimate for $\lambda_1(\mathbf{H}_n^\varepsilon)$, as $\varepsilon$ goes to $0$. We will crucially exploit the concept of {\it thin torsional rigidity}, see Appendix \ref{sec:A} below. 
In particular, we will need the following quantity 
\[
\mathbf{T}_n^\varepsilon:=\sup_{\varphi\in C^\infty_0(\mathbf{H}_n^\varepsilon)}\left\{2\,\int_{\Gamma_n^\varepsilon} \frac{\partial u_1}{\partial x_2}\, \varphi\,d\mathcal{H}^{1}-\int_{\mathbf{H}_n^\varepsilon}|\nabla \varphi|^2\,dx\right\},
\]
which is well-defined, in light of the construction and Lemma \ref{lm:regolaritaminchiona}. We then observe that, with the notation of Definition \ref{def:sottiletta}, we have
	\begin{equation}
	\label{cosi}
	\mathbf{T}_n^\varepsilon=\mathcal{T}_{\Omega_\Sigma}(\Sigma;f),\qquad \mbox{ with }\Omega=\mathbf{H},\ \Sigma=\Gamma_n^\varepsilon\ \mbox{ and }\ f=\frac{\partial u_1}{\partial x_2}.
	\end{equation}
The following two results will permit to quantify the decay rate to $0$ of this quantity, as $\varepsilon$ goes to $0$.
\begin{lemma}[Upper bound]
\label{lemma:big_O}
With the notation above, we have
	\begin{equation*}
	\mathbf{T}_n^\varepsilon\leq 2\, (n+1)\, \varepsilon^2\, \left\|\frac{\partial u_1}{\partial x_2}\right\|^2_{L^\infty([1,4]\times\{1\})},\quad \mbox{ for every } n\in\mathbb{N}\setminus\{0\} \mbox{ and } 0<\varepsilon\le \frac{1}{4\,n}.
	\end{equation*}
\end{lemma}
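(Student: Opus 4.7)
The plan is, given any test function $\varphi\in C^\infty_0(\mathbf{H}_n^\varepsilon)$, to estimate the integrand in the supremum defining $\mathbf{T}_n^\varepsilon$ uniformly in $\varphi$, by reducing to a tube-by-tube analysis. Set $M:=\|\partial u_1/\partial x_2\|_{L^\infty([1,4]\times\{1\})}$. Because $\int_{\mathbf{H}}|\nabla\varphi|^2\,dx\geq 0$ and the tubes $T_i^\varepsilon$ are pairwise disjoint (by $\varepsilon\leq 1/(4n)$), I first drop the contribution on $\mathbf{H}$ to get
\[
2\int_{\Gamma_n^\varepsilon}\frac{\partial u_1}{\partial x_2}\,\varphi\,d\mathcal{H}^1-\int_{\mathbf{H}_n^\varepsilon}|\nabla\varphi|^2\,dx\leq \sum_{i}\left[2\int_{\Sigma_i^\varepsilon\times\{1\}}\frac{\partial u_1}{\partial x_2}\,\varphi\,d\mathcal{H}^1-\int_{T_i^\varepsilon}|\nabla\varphi|^2\,dx\right],
\]
where the sum runs over the $n+1$ tubes. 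It will then suffice to bound each summand by $2\varepsilon^2 M^2$.

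The core of the argument is the following trace-Steklov inequality on each tube: since $\varphi$ vanishes on the top face $\Sigma_i^\varepsilon\times\{2\}$ and on the two lateral sides of $T_i^\varepsilon$, I claim that
\[
\int_{\Sigma_i^\varepsilon\times\{1\}}\varphi^2\,d\mathcal{H}^1\leq \frac{2\varepsilon}{\pi}\int_{T_i^\varepsilon}|\nabla\varphi|^2\,dx.
\]
To prove it, set $I(t):=\int_{\Sigma_i^\varepsilon}\varphi(s,t)^2\,ds$; top-vanishing gives $I(2)=0$, so
\[
I(1)=-\int_1^2 I'(t)\,dt=-2\int_{T_i^\varepsilon}\varphi\,\partial_t\varphi\,dx\leq 2\,\|\varphi\|_{L^2(T_i^\varepsilon)}\,\|\partial_t\varphi\|_{L^2(T_i^\varepsilon)}
\]
by Cauchy-Schwarz. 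The slicewise 1D Dirichlet Poincar\'e inequality on the interval $\Sigma_i^\varepsilon$ of length $2\varepsilon$ (which applies because of the lateral vanishing) yields $\|\varphi\|_{L^2(T_i^\varepsilon)}\leq(2\varepsilon/\pi)\|\partial_s\varphi\|_{L^2(T_i^\varepsilon)}$; substituting and using $2ab\leq a^2+b^2$ gives the claim.

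With this trace inequality in hand, Cauchy-Schwarz provides
\[
\left|\int_{\Sigma_i^\varepsilon\times\{1\}}\frac{\partial u_1}{\partial x_2}\,\varphi\,d\mathcal{H}^1\right|\leq M\sqrt{2\varepsilon}\,\|\varphi(\cdot,1)\|_{L^2(\Sigma_i^\varepsilon)}\leq \frac{2\varepsilon M}{\sqrt\pi}\,\|\nabla\varphi\|_{L^2(T_i^\varepsilon)},
\]
and Young's inequality $2ab-b^2\leq a^2$ with $a=2\varepsilon M/\sqrt\pi$, $b=\|\nabla\varphi\|_{L^2(T_i^\varepsilon)}$ gives a per-tube bound of $4\varepsilon^2 M^2/\pi\leq 2\varepsilon^2 M^2$, since $4/\pi<2$. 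Summing over the $n+1$ tubes and taking the supremum over $\varphi$ concludes.

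The main subtle point is the trace inequality: the naive bound $\int_\Gamma\varphi^2\leq\int_T|\partial_t\varphi|^2$, which uses only top-vanishing, carries no $\varepsilon$-factor and would lead to the suboptimal estimate $\mathbf{T}_n^\varepsilon=O(\varepsilon)$. It is essential to combine the top-vanishing with the lateral Dirichlet condition through the 1D Poincar\'e in the $s$-direction in order to gain the $\varepsilon$-factor in the trace constant, which ultimately produces the correct $\varepsilon^2$ scaling.
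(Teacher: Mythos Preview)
Your argument is correct. The tube-by-tube reduction, the trace inequality
\[
\int_{\Sigma_i^\varepsilon\times\{1\}}\varphi^2\,d\mathcal{H}^1\le \frac{2\varepsilon}{\pi}\int_{T_i^\varepsilon}|\nabla\varphi|^2\,dx,
\]
and the subsequent Cauchy--Schwarz/Young steps all check out, and the final constant $4/\pi<2$ indeed yields the stated bound.

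Your route is close in spirit to the paper's but organized differently. The paper first invokes the abstract estimate $\mathcal{T}_{\Omega_\Sigma}(\Sigma;f)\le \gamma(\Omega_\Sigma)\,\|f\|_{L^\infty}^2\,\mathcal{H}^{N-1}(\Sigma)$ (Lemma~\ref{lemma:upper_bound_torsion}) and then argues that the trace constant $\gamma(\mathbf{H}_n^\varepsilon)\le\varepsilon$, via a reduction to a single tube followed by a scaling step together with the elementary one-dimensional inequality $\int_{\mathrm I}|\varphi(\cdot,1)|^2\le\int_{\mathrm I\times[1,2)}|\partial_{x_2}\varphi|^2$. You bypass the abstract lemma entirely and prove the $O(\varepsilon)$ trace bound on each tube directly, making explicit that the crucial $\varepsilon$-factor comes from combining the vertical fundamental-theorem-of-calculus estimate with the transverse Poincar\'e inequality on the width-$2\varepsilon$ cross-section. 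This is arguably more transparent: as you note, the top-vanishing trace inequality alone carries no $\varepsilon$, so the lateral Dirichlet condition is doing real work. What the paper's packaging buys is a clean separation between the general mechanism ($\mathcal{T}\le\gamma\,\|f\|_\infty^2\,|\Sigma|$) and the geometry-specific input ($\gamma\le\varepsilon$); your approach trades that modularity for a shorter, self-contained computation.
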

\begin{proof}
	First of all, by construction we have $\Gamma_n^\varepsilon\subseteq [1,4]\times\{1\}$. Thus we get
	\[
		\left\|\frac{\partial u_1}{\partial x_2}\right\|_{L^\infty(\Gamma_n^\varepsilon)}\leq  \left\|\frac{\partial u_1}{\partial x_2}\right\|_{L^\infty([1,4]\times\{1\})}.
	\]
	The last term is finite thanks to Lemma \ref{lm:regolaritaminchiona}.
By \eqref{cosi}, we can thus apply Lemma \ref{lemma:upper_bound_torsion}. 
This entails that 
	\[
	\begin{split}
	\mathbf{T}_n^\varepsilon&\leq \gamma(\mathbf{H}_n^\varepsilon)\,\left\|\frac{\partial u_1}{\partial x_2}\right\|^2_{L^\infty([1,4]\times\{1\})}\, \mathcal{H}^{1}(\Gamma_n^\varepsilon)=2\,\varepsilon\,(n+1)\,\gamma(\mathbf{H}_n^\varepsilon)\,\left\|\frac{\partial u_1}{\partial x_2}\right\|^2_{L^\infty([1,4]\times\{1\})},
	\end{split}
	\]
	where $\gamma(\mathbf{H}_n^\varepsilon)$ is the sharp trace--type constant defined in Lemma \ref{lemma:upper_bound_torsion}. We will show that $\gamma(\mathbf{H}_n^\varepsilon)\le \varepsilon$. By definition, we have
	\[
	\begin{split}
	\gamma(\mathbf{H}_n^\varepsilon)=\sup_{\varphi\in W^{1,2}_0(\mathbf{H}_n^\varepsilon)\setminus\{0\}}\frac{\displaystyle \int_{\Gamma_n^\varepsilon} |\varphi|^2\,d\mathcal{H}^{1}}{\displaystyle \int_{\mathbf{H}_n^\varepsilon} |\nabla \varphi|^2\,dx }&=\sup_{\varphi\in C^\infty_0(\mathbf{H}_n^\varepsilon)\setminus\{0\}}\frac{\displaystyle \int_{\Gamma_n^\varepsilon} |\varphi|^2\,d\mathcal{H}^{1}}{\displaystyle \int_{\mathbf{H}_n^\varepsilon} |\nabla \varphi|^2\,dx }\\
	&=\sup_{\varphi\in C^\infty_0(\mathbf{H}_n^\varepsilon)\setminus\{0\}}\frac{\displaystyle \sum_{i=0}^n\int_{\Sigma_i^\varepsilon\times\{1\}} |\varphi|^2\,d\mathcal{H}^{1}}{\displaystyle \int_{\mathbf{H}_n^\varepsilon} |\nabla \varphi|^2\,dx }\\
	\end{split}
	\]
We take $\varphi\in C^\infty_0(\mathbf{H}_n^\varepsilon)\setminus\{0\}$ and observe that 
\begin{equation}
\label{paura}
\mbox{ if } \int_{T_i^\varepsilon} |\nabla \varphi|^2\,dx=0, \quad \mbox{ then } \quad \int_{\Sigma_i^\varepsilon\times\{1\}} |\varphi|^2\,d\mathcal{H}^1=0.
\end{equation}
Indeed, the first condition implies that $\varphi$ is constant in the tube $T_i^\varepsilon$. Since $\varphi$ is compactly supported in $\mathbf{H}_n^\varepsilon$ and $\partial T_i^\varepsilon\cap \partial \mathbf{H}_n^\varepsilon\not=\emptyset$, this shows that $\varphi$ must identically vanish on the tube $\overline{T_i^\varepsilon}$. Finally, since $\Sigma_i^\varepsilon\times\{1\}\subseteq \partial T_i^\varepsilon$, we get that $\varphi$ identically vanishes on $\Sigma_i^\varepsilon\times\{1\}$, as well.  
\par
Thanks to \eqref{paura}, we can write
\[
\gamma(\mathbf{H}_n^\varepsilon)=\sup_{\varphi\in C^\infty_0(\mathbf{H}_n^\varepsilon)\setminus\{0\}}\left\{\frac{\displaystyle \sum_{i=0}^n\int_{\Sigma_i^\varepsilon\times\{1\}} |\varphi|^2\,d\mathcal{H}^{1}}{\displaystyle \int_{\mathbf{H}_n^\varepsilon} |\nabla \varphi|^2\,dx }\, :\, \exists i_0\in \{0,\dots,n\} \mbox{ such that } \int_{T^\varepsilon_{i_0}} |\nabla \varphi|^2\,dx\not=0\right\}.
\]
Let us take $\varphi\in C^\infty_0(\mathbf{H}_n^\varepsilon)\setminus\{0\}$ which is admissible for the maximization problem on the right-hand side. Let $J_\varphi\subseteq \{0,\dots,n\}$ be the set of indices such that
\[
\int_{T_i^\varepsilon} |\nabla \varphi|^2\,dx\not=0,\qquad \mbox{ for } i\in J_\varphi.
\]
In light of \eqref{paura} and the definition of $J_\varphi$ , we then have\footnote{We use the elementary inequality
	\[
		\frac{\displaystyle\sum_{i=1}^n a_i}{\displaystyle\sum_{i=1}^n b_i}\leq \max\left\{\frac{a_i}{b_i}\, :\,i=1,\dots,n\right\},\qquad \mbox{ for all }a_i\ge 0,b_i> 0.
		\]} 
\[
\begin{split}
\frac{\displaystyle \sum_{i=0}^n\int_{\Sigma_i^\varepsilon\times\{1\}} |\varphi|^2\,d\mathcal{H}^{1}}{\displaystyle \int_{\mathbf{H}_n^\varepsilon} |\nabla \varphi|^2\,dx }\le \frac{\displaystyle \sum_{i\in J_\varphi}\int_{\Sigma_i^\varepsilon\times\{1\}} |\varphi|^2\,d\mathcal{H}^{1}}{\displaystyle \sum_{i\in J_\varphi}\int_{T_i^\varepsilon} |\nabla \varphi|^2\,dx }&\le \max_{i\in J_\varphi} \frac{\displaystyle \int_{\Sigma_i^\varepsilon\times\{1\}} |\varphi|^2\,d\mathcal{H}^{1}}{\displaystyle \int_{T_i^\varepsilon} |\nabla \varphi|^2\,dx }\\
&\le \max_{i=1,\dots,n}\sup_{\varphi\in C^\infty_0(T_i^\varepsilon)\setminus\{0\}}\frac{\displaystyle \int_{\Sigma_i^\varepsilon\times\{1\}} |\varphi|^2\,d\mathcal{H}^{1}}{\displaystyle \int_{T_i^\varepsilon} |\nabla \varphi|^2\,dx }.
\end{split}
\]
Finally, by scaling and translating, we see that 
\[
\sup_{\varphi\in C^\infty_0(T_i^\varepsilon)\setminus\{0\}}\frac{\displaystyle \int_{\Sigma_i^\varepsilon} |\varphi|^2\,d\mathcal{H}^{1}}{\displaystyle \int_{T_i^\varepsilon} |\nabla \varphi|^2\,dx }=\varepsilon \sup_{\varphi\in C^\infty_0(\mathrm{I}\times[1,2))\setminus\{0\}}\frac{\displaystyle \int_{(-1,1)\times\{1\}} |\varphi|^2\,d\mathcal{H}^{1}}{\displaystyle \int_{(-1,1)\times[1,2)} |\nabla \varphi|^2\,dx}\le \varepsilon.
\]
In the last estimate we used the following trace inequality
\[
\begin{split}
\int_{\mathrm{I}\times[1,2)} |\nabla \varphi|^2\,dx\ge \int_\mathrm{I}\left(\int_{1}^2 \left|\frac{\partial \varphi}{\partial x_2}\right|^2\,dx_2\right)\,dx_1\ge \int_\mathrm{I} |\varphi(x_1,1)|^2\,dx_1=\int_{\mathrm{I}\times\{1\}} |\varphi|^2\,d\mathcal{H}^{1},
\end{split}
\]
which holds for every $\varphi\in C^\infty_0(\mathrm{I}\times [1,2))$. This discussion proves that $\gamma(\mathbf{H}_n^\varepsilon)\le \varepsilon$, as claimed.
\end{proof}

\begin{lemma}[Asymptotical lower bound]
With the notation above, there exists a universal constant $\alpha>0$ such that for every $n\in\mathbb{N}\setminus\{0\}$ we have
\label{lm:blow_up}
\[
\mathbf{T}_n^\varepsilon\ge \alpha\,\varepsilon^2\,\sum_{i=0}^n\left(\frac{\partial u_1}{\partial x_2}\left(2+\frac{i}{n},1\right)\right)^2+o(\varepsilon^2),\qquad \mbox{ as }\varepsilon\searrow 0.
\]
\end{lemma}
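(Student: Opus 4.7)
The plan is to build an explicit competitor in the variational definition of $\mathbf{T}_n^\varepsilon$, localized near each junction point $c_i := (2+i/n,1)$ and tuned to match the coefficient $f_i := \frac{\partial u_1}{\partial x_2}(c_i)$. Fix once and for all a reference profile $\Psi \in C^\infty_c((-1,1)^2)$ with $\Psi(\cdot,0)\not\equiv 0$ (for concreteness $\Psi(y_1,y_2) = \eta(y_1)\eta(y_2)$ with $\eta \in C^\infty_c((-1,1))$, $\eta(0)=1$), and set
\[
A := \int_{-1}^1 \Psi(y_1,0)\, dy_1 > 0, \qquad D := \int_{(-1,1)^2} |\nabla \Psi|^2\, dy > 0.
\]
For $t_0,\dots,t_n\in\mathbb{R}$ to be chosen, define
\[
\varphi_i(x) := t_i\,\varepsilon\, \Psi\!\left(\frac{x-c_i}{\varepsilon}\right), \qquad \varphi := \sum_{i=0}^n \varphi_i.
\]
The factor $\varepsilon$ in front is critical: it balances the two competing terms in $\mathbf{T}_n^\varepsilon$ at the common order $\varepsilon^2$.

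Second, I would verify that $\varphi \in C^\infty_c(\mathbf{H}_n^\varepsilon)$. The support of $\varphi_i$ is the square $c_i + \varepsilon\,[-1,1]^2$ straddling $\Sigma_i^\varepsilon\times\{1\}$; for $\varepsilon\leq 1/(4n)$ these squares are pairwise disjoint and each is contained in $\mathbf{H}^\varepsilon_n$ (its lower half lies inside the horizontal strip $(0,+\infty)\times(0,1)\subseteq \mathbf{H}$, its upper half inside the tube $T_i^\varepsilon$). Then, by the change of variables $y=(x-c_i)/\varepsilon$ and disjointness of supports,
\[
\int_{\mathbf{H}_n^\varepsilon}|\nabla \varphi|^2\,dx = \varepsilon^2 D\sum_{i=0}^n t_i^2,
\]
\[
\int_{\Gamma_n^\varepsilon} \frac{\partial u_1}{\partial x_2}\,\varphi\,d\mathcal{H}^1 = \varepsilon^2\sum_{i=0}^n t_i \int_{-1}^1 \frac{\partial u_1}{\partial x_2}(2+i/n+\varepsilon y_1,1)\,\Psi(y_1,0)\,dy_1.
\]
Invoking Lemma \ref{lm:regolaritaminchiona}, which guarantees continuity of $\partial u_1/\partial x_2$ at each $c_i$, the inner integral equals $A f_i + \mathrm{o}(1)$ as $\varepsilon\to 0^+$, uniformly in $i\in\{0,\dots,n\}$ (a finite set for fixed $n$).

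Plugging these into the definition of $\mathbf{T}_n^\varepsilon$ yields
\[
\mathbf{T}_n^\varepsilon \;\geq\; \varepsilon^2 \sum_{i=0}^n \bigl(2A t_i f_i - D t_i^2\bigr) + \mathrm{o}(\varepsilon^2),
\]
and choosing $t_i = A f_i / D$ (which is bounded independently of $\varepsilon$) maximizes each summand, producing
\[
\mathbf{T}_n^\varepsilon \;\geq\; \frac{A^2}{D}\,\varepsilon^2 \sum_{i=0}^n f_i^2 + \mathrm{o}(\varepsilon^2).
\]
This gives the claim with $\alpha := A^2/D$, a universal constant depending only on the reference profile $\Psi$. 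The only real subtlety — and the point I would take care to justify — is admissibility of the test function across the gluing interface $\Sigma_i^\varepsilon\times\{1\}$: this is what forces $\Psi$ to be smooth on both sides of $\{y_2=0\}$, ruling out the naive choice of a profile supported only inside the tube (which would produce a jump, hence no $H^1$ membership). Everything else is a routine scaling computation.
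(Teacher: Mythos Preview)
Your argument is correct. The admissibility check for $\varphi$ works as you describe: since $\mathrm{supp}(\Psi)\Subset(-1,1)^2$, each $\varphi_i$ is compactly supported inside $c_i+\varepsilon(-1,1)^2$, and this open square lies in $\mathbf{H}_n^\varepsilon$ because the interface $\Sigma_i^\varepsilon\times\{1\}$ is interior to $\mathbf{H}_n^\varepsilon$. The scaling and the use of Lemma~\ref{lm:regolaritaminchiona} for the continuity of $\partial u_1/\partial x_2$ on the closed segment $[1,4]\times\{1\}$ are also sound. One small point worth stating explicitly: to ensure $A>0$ you should take $\eta\ge 0$, otherwise $\int \eta$ need not be positive.

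Your route, however, is genuinely different from the paper's. The paper first invokes the super-additivity of the thin torsional rigidity (Lemma~\ref{lemma:super}) to reduce $\mathbf{T}_n^\varepsilon$ to a sum of single-tube quantities $\mathcal{T}_{\mathbf{H}\cup T_i^\varepsilon}(\Sigma_i^\varepsilon\times\{1\};\partial u_1/\partial x_2)$, and then applies the blow-up result Theorem~\ref{thm:blow_up_AO} from \cite{AO2023}, which identifies the sharp leading coefficient as $\alpha=\mathfrak{T}_{\Pi_{\mathrm{I}\times\{0\}}}(\mathrm{I}\times\{0\})$, the thin torsional rigidity of a half-plane with an attached semi-infinite strip. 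Your direct competitor argument bypasses both ingredients and is entirely self-contained; the price is a weaker constant $\alpha=A^2/D$ depending on the arbitrary profile $\Psi$. For the application in this section (proving $\rho(\mathbf{H}_{n_0}^\varepsilon)<\rho(\mathbf{H})$ for $\varepsilon$ small) only positivity and universality of $\alpha$ matter, so your simpler argument suffices; the paper's approach would be preferable only if one cared about the sharp value of the leading coefficient.
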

\begin{proof}
By using \eqref{cosi} and the super-additivity of the thin torsional rigidity (see Lemma \ref{lemma:super}), we get
\[
\mathbf{T}_n^\varepsilon\ge \sum_{i=0}^n\mathcal{T}_{\mathbf{H}\cup T_i^\varepsilon}\left(\Sigma_i^\varepsilon\times\{1\};\frac{\partial u_1}{\partial x_2}\right).
\]
Observe that we used that $\partial u_1/\partial x_2$ has constant sign on $\Gamma^\varepsilon_n$.
On the other hand, as consequence of Theorem \ref{thm:blow_up_AO} applied with
\[
\mathbf{p}_i=\left(2+\frac{i}{n},1\right),\qquad \Sigma_i=(\mathrm{I}\times\{1\})+\mathbf{p}_i,\qquad f_i=\frac{\partial u_1}{\partial x_2},
\]
we know that for every $i=0,\dots,n$ we have
	\[
			\mathcal{T}_{\mathbf{H}\cup T_i^\varepsilon}\left(\Sigma_i^\varepsilon\times\{1\};\frac{\partial u_1}{\partial x_2}\right)=\alpha_i\,\left(\frac{\partial u_1}{\partial x_2}\left(2+\frac{i}{n},1\right)\right)^2\,\varepsilon^2+o(\varepsilon^2),\qquad\mbox{ as }\varepsilon\searrow 0,
	\]
	for a universal constant $\alpha_i>0$.  More precisely, with the notations of Definition \ref{def:torsion_blow_up}, the latter is given by
\[
\alpha_i=\mathfrak{T}_{\Pi_{\Sigma_i\times\{1\}}}(\Sigma_i\times\{1\})=\mathfrak{T}_{\Pi_{\mathrm{I}\times\{0\}}}(\mathrm{I}\times\{0\})>0.
\]
Observe in particular that $\alpha_i$ does not depend $i$, thanks to the fact that the quantity $\mathfrak{T}$ is invariant by rigid movements.	
This is enough to conclude.
	\end{proof}
The relevance of $\mathbf{T}_n^\varepsilon$ in our problem is encoded by the following estimate.	
\begin{proposition}[Eigenvalue estimate]\label{prop:eigen_estim}
With the notation above, for every $n\in\mathbb{N}\setminus\{0\}$ we have that
	\begin{equation}\label{eq:eigen_estim_th1}
\lambda_1(\mathbf{H}_n^\varepsilon)\leq \lambda_1(\mathbf{H})	-\mathbf{T}_n^\varepsilon+o(\mathbf{T}_n^\varepsilon),\qquad \mbox{ as }\varepsilon\searrow 0,
	\end{equation}
\end{proposition}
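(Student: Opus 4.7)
The plan is to bound $\lambda_1(\mathbf{H}_n^\varepsilon)$ from above by inserting the trial function $u := u_1 + \varphi$ into the Rayleigh quotient. Here $u_1$ is the positive, $L^2$-normalized first eigenfunction of $\mathbf{H}$ (provided by Theorem~\ref{teo:first_eigen}), extended by zero on $\mathbf{H}_n^\varepsilon \setminus \mathbf{H}$; this extension belongs to $W^{1,2}_0(\mathbf{H}_n^\varepsilon)$ since $u_1 = 0$ on $\Gamma_n^\varepsilon \subseteq \partial\mathbf{H}$. Writing $f := \partial u_1/\partial x_2$ and using Lemma~\ref{lm:regolaritaminchiona} to evaluate the cross-term $\int_{\mathbf{H}_n^\varepsilon}\langle \nabla u_1,\nabla\varphi\rangle\,dx = \lambda_1(\mathbf{H})\int u_1\,\varphi\,dx + \int_{\Gamma_n^\varepsilon} f\,\varphi\,d\mathcal{H}^1$, a direct expansion of the Rayleigh quotient yields, for every $\varphi \in W^{1,2}_0(\mathbf{H}_n^\varepsilon)$,
\[
\lambda_1(\mathbf{H}_n^\varepsilon) - \lambda_1(\mathbf{H}) \;\leq\; \frac{\displaystyle 2\int_{\Gamma_n^\varepsilon} f\,\varphi\,d\mathcal{H}^1 + \int_{\mathbf{H}_n^\varepsilon}|\nabla\varphi|^2\,dx - \lambda_1(\mathbf{H})\int_{\mathbf{H}_n^\varepsilon}\varphi^2\,dx}{\displaystyle 1 + 2\int_{\mathbf{H}} u_1\,\varphi\,dx + \int_{\mathbf{H}_n^\varepsilon}\varphi^2\,dx}.
\]

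I then choose $\varphi = -\psi_\varepsilon$, where $\psi_\varepsilon \in C^\infty_0(\mathbf{H}_n^\varepsilon)$ is a near-maximizer of the sup defining $\mathbf{T}_n^\varepsilon$, in the sense that
\[
2\int_{\Gamma_n^\varepsilon} f\,\psi_\varepsilon\,d\mathcal{H}^1 - \int_{\mathbf{H}_n^\varepsilon}|\nabla\psi_\varepsilon|^2\,dx \;\geq\; \mathbf{T}_n^\varepsilon - o(\mathbf{T}_n^\varepsilon),\qquad\varepsilon\searrow 0.
\]
The numerator in the previous display becomes $-\mathbf{T}_n^\varepsilon + o(\mathbf{T}_n^\varepsilon) - \lambda_1(\mathbf{H})\,\|\psi_\varepsilon\|_{L^2}^2$, which is bounded above by $-\mathbf{T}_n^\varepsilon + o(\mathbf{T}_n^\varepsilon)$; by Cauchy--Schwarz the denominator equals $1 + O(\|\psi_\varepsilon\|_{L^2})$. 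Therefore the whole argument closes provided one can ensure $\|\psi_\varepsilon\|_{L^2}^2 = o(\mathbf{T}_n^\varepsilon)$: under this condition the denominator is $1+o(1)$, and dividing yields exactly \eqref{eq:eigen_estim_th1}, since $\mathbf{T}_n^\varepsilon\cdot o(1) = o(\mathbf{T}_n^\varepsilon)$.

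The crucial technical ingredient is thus the construction of such a $\psi_\varepsilon$: a near-maximizer of $\mathbf{T}_n^\varepsilon$ which is moreover a factor of $\varepsilon^2$ smaller in $L^2$ than in $H^1$ semi-norm. Inspired by the blow-up analysis of Lemma~\ref{lm:blow_up} and Theorem~\ref{thm:blow_up_AO}, I would take $\psi_\varepsilon = \sum_{i=0}^n \psi_\varepsilon^{(i)}$, a sum of pieces with (essentially) disjoint supports, one per tube, of the form
\[
\psi_\varepsilon^{(i)}(x) := \varepsilon\,\tilde\psi_i\!\left(\frac{x-\mathbf{p}_i}{\varepsilon}\right),\qquad \mathbf{p}_i = \left(2+\tfrac{i}{n},\,1\right),
\]
where $\tilde\psi_i$ is a (truncated, compactly supported) near-optimizer of the limit thin-torsion problem on the blow-up domain $\Pi_i$ with constant datum $f(\mathbf{p}_i)$. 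A direct change of variables then gives $\|\nabla\psi_\varepsilon^{(i)}\|_{L^2}^2 = O(\varepsilon^2)$ and, thanks to the extra factor of $\varepsilon$ in the scaling, $\|\psi_\varepsilon^{(i)}\|_{L^2}^2 = O(\varepsilon^4)$. Summing over $i$ and recalling that $\mathbf{T}_n^\varepsilon \sim c\,\varepsilon^2$ by Lemmas~\ref{lemma:big_O} and~\ref{lm:blow_up}, one obtains $\|\psi_\varepsilon\|_{L^2}^2 = O(\varepsilon^4) = o(\mathbf{T}_n^\varepsilon)$, as needed.

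The step I expect to require the most care is precisely the construction of the $\tilde\psi_i$'s. In dimension two the limit domain $\Pi_i$ is unbounded, and an exact maximizer of the thin-torsion problem may exhibit logarithmic growth at infinity, so it need not lie in $W^{1,2}(\Pi_i)$. A careful truncation is therefore necessary, calibrated so that the resulting loss in the sup stays $o(\varepsilon^2)$ while the $L^2$-bound $\|\psi_\varepsilon^{(i)}\|_{L^2}^2 = O(\varepsilon^4)$ is preserved. I would carry out this step by invoking the blow-up machinery from \cite{AO2023} recalled in the appendix, which is precisely designed to handle this type of compactly supported approximate optimizer.
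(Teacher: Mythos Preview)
Your overall strategy---insert $u_1-\psi_\varepsilon$ into the Rayleigh quotient and expand using Lemma~\ref{lm:regolaritaminchiona}---is exactly the paper's. The difference, and the source of a genuine gap in your argument, lies in the choice of $\psi_\varepsilon$.

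The paper simply takes $\psi_\varepsilon=U_{\Gamma_n^\varepsilon}$, the \emph{exact} maximizer of $\mathbf{T}_n^\varepsilon$, whose existence is granted by Proposition~\ref{prop:esisteU}. Two clean identities then do all the work: testing the equation of $U_{\Gamma_n^\varepsilon}$ with $u_1$ (whose trace on $\Gamma_n^\varepsilon$ vanishes) gives $\int_{\mathbf{H}_n^\varepsilon}\langle\nabla u_1,\nabla U_{\Gamma_n^\varepsilon}\rangle\,dx=0$, while Lemma~\ref{lm:regolaritaminchiona} combined with \eqref{torsion_equiv} yields $\int_{\mathbf{H}} u_1\,U_{\Gamma_n^\varepsilon}\,dx=-\mathbf{T}_n^\varepsilon/\lambda_1(\mathbf{H})$ \emph{exactly}. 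These turn the Rayleigh quotient into $(\lambda_1(\mathbf{H})+\mathbf{T}_n^\varepsilon)\big(1+\tfrac{2}{\lambda_1(\mathbf{H})}\mathbf{T}_n^\varepsilon\big)^{-1}$ after dropping a nonnegative term, and a first-order expansion finishes. No $L^2$ estimate on the corrector, no blow-up, no approximation.

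Your route instead manufactures $\psi_\varepsilon$ from rescaled blow-up profiles and needs it to be a near-maximizer of $\mathbf{T}_n^\varepsilon$. But the sum of one-tube profiles only achieves $\alpha\,\varepsilon^2\sum_i (\partial_{x_2}u_1(\mathbf{p}_i))^2+o(\varepsilon^2)$, which is merely the \emph{lower} bound for $\mathbf{T}_n^\varepsilon$ from Lemma~\ref{lm:blow_up}; the paper never establishes that this is the exact leading order of $\mathbf{T}_n^\varepsilon$ (the upper bound in Lemma~\ref{lemma:big_O} has a different constant). Hence your $\psi_\varepsilon$ need not satisfy $2\int f\psi_\varepsilon-\int|\nabla\psi_\varepsilon|^2\ge \mathbf{T}_n^\varepsilon-o(\mathbf{T}_n^\varepsilon)$, and your argument as written proves only Corollary~\ref{cor:eigen_H_eps}, not Proposition~\ref{prop:eigen_estim}. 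The fix is immediate: take $\psi_\varepsilon=U_{\Gamma_n^\varepsilon}$; then your requirement $\|\psi_\varepsilon\|_{L^2}^2=o(\mathbf{T}_n^\varepsilon)$ is in fact not needed---the Poincar\'e bound $\|U_{\Gamma_n^\varepsilon}\|_{L^2}^2\le C\,\mathbf{T}_n^\varepsilon$ already makes the denominator $1+O(\sqrt{\mathbf{T}_n^\varepsilon})$, and the resulting error is $O((\mathbf{T}_n^\varepsilon)^{3/2})=o(\mathbf{T}_n^\varepsilon)$.
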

\begin{proof}
We first observe that $\lambda_1(\mathbf{H}_n^\varepsilon)>0$, since $\mathbf{H}_n^\varepsilon$ is bounded in the $x_2$ direction. By appealing again to \eqref{cosi}, we can apply Proposition \ref{prop:esisteU} and infer existence of $U_{\Gamma_n^\varepsilon}\in W^{1,2}_0(\mathbf{H}_n^\varepsilon)$ which attains the maximum in the definition of $\mathbf{T}_n^\varepsilon$.
\par
We use the trial function $u_1-U_{\Gamma_n^\varepsilon}$ in the variational problem which defines $\lambda_1(\mathbf{H}_n^\varepsilon)$. This gives
	\begin{equation}
	\label{eq:eigen_estim_1}
	\begin{split}
		\lambda_1(\mathbf{H}_n^\varepsilon)\leq \frac{\displaystyle \int_{\mathbf{H}_n^\varepsilon}|\nabla (u_1-U_{\Gamma_n^\varepsilon})|^2\,dx}{\displaystyle \int_{\mathbf{H}_n^\varepsilon}(u_1-U_{\Gamma_n^\varepsilon})^2\,dx}&=\frac{\displaystyle \lambda_1(\mathbf{H})+\int_{\mathbf{H}_n^\varepsilon}|\nabla U_{\Gamma_n^\varepsilon}|^2\,dx-2\,\int_{\mathbf{H}_n^\varepsilon}\langle\nabla u_1,\nabla U_{\Gamma_n^\varepsilon}\rangle\,dx }{\displaystyle 1+\int_{\mathbf{H}_n^\varepsilon}U_{\Gamma_n^\varepsilon}^2\,dx-2\int_{\mathbf{H}_n^\varepsilon}u_1 U_{\Gamma_n^\varepsilon}\,dx}\\
		&\le \frac{\displaystyle \lambda_1(\mathbf{H})+\int_{\mathbf{H}_n^\varepsilon}|\nabla U_{\Gamma_n^\varepsilon}|^2\,dx-2\,\int_{\mathbf{H}_n^\varepsilon}\langle\nabla u_1,\nabla U_{\Gamma_n^\varepsilon}\rangle\,dx }{\displaystyle 1-2\int_{\mathbf{H}_n^\varepsilon}u_1\, U_{\Gamma_n^\varepsilon}\,dx}.
	\end{split}
	\end{equation}
By using $u_1$ as a test function for the equation of $U_{\Gamma_n^\varepsilon}$	and observing that $u_1$ has a null trace on $\Gamma_n^\varepsilon$, we get
\begin{equation}
	\label{eq:eigen_estim_2}
0=\int_{\mathbf{H}^\varepsilon_n} \langle\nabla u_1,\nabla U_{\Gamma_n^\varepsilon}\rangle\,dx=\int_{\mathbf{H}} \langle\nabla u_1,\nabla U_{\Gamma_n^\varepsilon}\rangle\,dx.
\end{equation}
On the the other hand, by using $U_{\Gamma^\varepsilon_n}$ as a test function for the equation of $u_1$, one obtains from Lemma \ref{lm:regolaritaminchiona} that
	\begin{equation}
	\label{eq:eigen_estim_3}
	\begin{split}
		\int_{\mathbf{H}_n^\varepsilon}u_1\, U_{\Gamma_n^\varepsilon}\,dx&=\int_{\mathbf{H}} u_1\, U_{\Gamma_n^\varepsilon}\,dx \\
		&=\frac{1}{\lambda_1(\mathbf{H})}\,\int_{\mathbf{H}} \langle\nabla u_1,\nabla U_{\Gamma_n^\varepsilon}\rangle\,dx-\frac{1}{\lambda_1(\mathbf{H})}\int_{\Gamma_n^\varepsilon}U_{\Gamma_n^\varepsilon}\frac{\partial u_1}{\partial x_2} d\mathcal{H}^{1}=-\frac{1}{\lambda_1(\mathbf{H})}\,\mathbf{T}_n^\varepsilon.
	\end{split}
	\end{equation}
	In the last identity, we used \eqref{torsion_equiv}.
	Hence,  from \eqref{eq:eigen_estim_1}, \eqref{eq:eigen_estim_1} and \eqref{eq:eigen_estim_2} we get that
	\begin{equation*}
		\lambda_1(\mathbf{H}_n^\varepsilon)\leq \left(\lambda_1(\mathbf{H})+\mathbf{T}_n^\varepsilon\right)\,\left(1+\frac{2}{\lambda_1(\mathbf{H})}\,\mathbf{T}_n^\varepsilon\right)^{-1}.
	\end{equation*}
	For every fixed $n\in\mathbb{N}\setminus\{0\}$, the quanity $\mathbf{T}_n^\varepsilon$ is infinitesimal, as $\varepsilon$ goes to $0$ (thanks to Lemma \ref{lemma:big_O}). Thus we obtain that
	\[
	\begin{split}
		\lambda_1(\mathbf{H}_n^\varepsilon)&\leq (\lambda_1(\mathbf{H})+\mathbf{T}_n^\varepsilon)\,\left(1-\frac{2}{\lambda_1(\mathbf{H})}\,\mathbf{T}_n^\varepsilon+o(\mathbf{T}_n^\varepsilon)\right),\qquad \mbox{ as } \varepsilon\searrow 0.
	\end{split}
	\]
This proves the claimed asymptotical estimate \eqref{eq:eigen_estim_th1}.
\end{proof}

The main outcome of the previous discussion is contained in the following result. This simply follows by combining Proposition \ref{prop:eigen_estim}, Lemma \ref{lemma:big_O} and Lemma \ref{lm:blow_up}.

\begin{corollary}\label{cor:eigen_H_eps}
	For any fixed $n\in\mathbb{N}$, there holds
	\begin{equation*}
		\lambda_1(\mathbf{H}_n^\varepsilon)\leq \lambda_1(\mathbf{H})-\varepsilon^2\,\alpha\,\sum_{i=0}^n\left(\frac{\partial u_1}{\partial x_2}\left(2+\frac{i}{n},1\right)\right)^2+o(\varepsilon^2),\qquad\text{as }\varepsilon\searrow 0.
	\end{equation*}
\end{corollary}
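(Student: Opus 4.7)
The proof will be a short chaining of the three preceding results that the authors have already assembled. The strategy is: start from the quantitative upper bound on $\lambda_1(\mathbf{H}_n^\varepsilon)$ in terms of the thin torsional rigidity $\mathbf{T}_n^\varepsilon$, then sandwich $\mathbf{T}_n^\varepsilon$ between its $O(\varepsilon^2)$ upper bound and its explicit $\varepsilon^2$-order asymptotic lower bound.

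First, I invoke Proposition \ref{prop:eigen_estim} to write
\[
\lambda_1(\mathbf{H}_n^\varepsilon)\le \lambda_1(\mathbf{H})-\mathbf{T}_n^\varepsilon+o(\mathbf{T}_n^\varepsilon),\qquad \text{as }\varepsilon\searrow 0.
\]
Next, I use Lemma \ref{lemma:big_O}, which gives
\[
\mathbf{T}_n^\varepsilon\le 2\,(n+1)\,\varepsilon^2\,\left\|\frac{\partial u_1}{\partial x_2}\right\|^2_{L^\infty([1,4]\times\{1\})}=O(\varepsilon^2),
\]
for fixed $n$ and $\varepsilon\to 0^+$. In particular, the remainder $o(\mathbf{T}_n^\varepsilon)$ is absorbed into $o(\varepsilon^2)$.

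Finally, I plug in Lemma \ref{lm:blow_up}, which provides the asymptotic lower bound
\[
\mathbf{T}_n^\varepsilon\ge \alpha\,\varepsilon^2\,\sum_{i=0}^n\left(\frac{\partial u_1}{\partial x_2}\Big(2+\tfrac{i}{n},1\Big)\right)^2+o(\varepsilon^2),
\]
so that
\[
-\mathbf{T}_n^\varepsilon\le -\alpha\,\varepsilon^2\,\sum_{i=0}^n\left(\frac{\partial u_1}{\partial x_2}\Big(2+\tfrac{i}{n},1\Big)\right)^2+o(\varepsilon^2).
\]
Substituting these two pieces of information into the first display yields the claimed inequality.

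There is essentially no obstacle: all three ingredients have been established in the preceding pages. The only thing to check carefully is that the error term $o(\mathbf{T}_n^\varepsilon)$ coming from Proposition \ref{prop:eigen_estim} is truly $o(\varepsilon^2)$, which is immediate from Lemma \ref{lemma:big_O} since $\mathbf{T}_n^\varepsilon=O(\varepsilon^2)$. Note also that the constant $\alpha$ here is the universal constant produced by Lemma \ref{lm:blow_up} (coming from the blow-up quantity $\mathfrak{T}_{\Pi_{\mathrm{I}\times\{0\}}}(\mathrm{I}\times\{0\})$), which indeed does not depend on $i$ by rigid invariance; this is why a single $\alpha$ can be factored outside the sum in the final statement.
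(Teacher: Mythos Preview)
Your proof is correct and follows exactly the approach indicated in the paper, which states that the corollary ``simply follows by combining Proposition \ref{prop:eigen_estim}, Lemma \ref{lemma:big_O} and Lemma \ref{lm:blow_up}.'' You have merely spelled out this combination carefully, including the observation that $o(\mathbf{T}_n^\varepsilon)=o(\varepsilon^2)$ thanks to the upper bound.
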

Finally, by recalling the notation \eqref{rhoberto}, we can prove the following
\begin{theorem}
With the notation above,	there exist $n_0\in\mathbb{N}\setminus\{0\}$ such that
	\begin{equation*}
		\rho(\mathbf{H}^\varepsilon_{n_0})\le \left(1-\varepsilon^2\right)\,\rho(\mathbf{H})+o(\varepsilon^2),\qquad\text{ as }\varepsilon \searrow 0.
	\end{equation*} 
	In particular, $\mathbf{H}$ does not provide the sharp Makai-Hayman constant. 
\end{theorem}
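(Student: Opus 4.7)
The strategy is to combine the eigenvalue upper bound of Corollary~\ref{cor:eigen_H_eps} with a matching asymptotic for the inradius $r_{\mathbf{H}_n^\varepsilon}$, and then to pick $n_0$ large enough so that the loss in $\lambda_1$ strictly overcomes the gain in $r^2$. With the shorthand
\[
A_n := \alpha \sum_{i=0}^n \left(\frac{\partial u_1}{\partial x_2}\left(2 + \frac{i}{n}, 1\right)\right)^{\!2},
\]
Corollary~\ref{cor:eigen_H_eps} reads $\lambda_1(\mathbf{H}_n^\varepsilon) \le \lambda_1(\mathbf{H}) - A_n\,\varepsilon^2 + o(\varepsilon^2)$. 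If we succeed in proving
\[
r_{\mathbf{H}_n^\varepsilon}^2 = \frac{1}{4} + \frac{\varepsilon^2}{2} + o(\varepsilon^2),
\]
then multiplying these two asymptotics and using $\rho(\mathbf{H}) = \lambda_1(\mathbf{H})/4$ yields
\[
\rho(\mathbf{H}_n^\varepsilon) \le \rho(\mathbf{H}) + \varepsilon^2 \left(\frac{\lambda_1(\mathbf{H})}{2} - \frac{A_n}{4}\right) + o(\varepsilon^2),
\]
so that the theorem reduces to producing an $n_0$ with $A_{n_0} \ge 3\,\lambda_1(\mathbf{H})$.

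The inradius asymptotic is purely geometric. For the lower bound I would exhibit the ball of radius $r = 1/2 + \varepsilon^2/2$ centered at $(2 + i_0/n,\, 1/2 + \varepsilon^2/2)$ for any fixed $i_0 \in \{0,\dots,n\}$: it is tangent to $\{x_2 = 0\}$ at its south pole, lies inside the strip $(0,+\infty)\times(0,1)$ for $x_2 \le 1$, and its cap above $\{x_2 = 1\}$ has horizontal half-width $\sqrt{r^2 - (1-r)^2} = \sqrt{2r-1} = \varepsilon$ at $x_2 = 1$ and strictly smaller above, hence fits in the tube $T_{i_0}^\varepsilon$. For the matching upper bound, I would argue by cases on an arbitrary $B_r(p) \subseteq \mathbf{H}_n^\varepsilon$: (a) if $B_r(p) \cap \{x_2 > 1\} = \emptyset$ then $B_r(p) \subseteq \mathbf{H}$, and $r \le r_\mathbf{H} = 1/2$; (b) if $p_2 > 1$, then by the convexity of the ball and the disjointness of the tubes $T_i^\varepsilon$, the ball is trapped inside a single tube of width $2\varepsilon$ and $r \le \varepsilon$; (c) if $p_2 \le 1$ and the ball extends into a tube $T_i^\varepsilon$, then the two constraints $r \le p_2$ (ball stays above $\{x_2 = 0\}$) and $r^2 - (1-p_2)^2 \le \varepsilon^2$ (cap at $x_2 = 1$ fits in the tube) jointly yield $r^2 \le 1/4 + \varepsilon^2/2 + O(\varepsilon^4)$ after optimizing over $p_2$, with equality asymptotically at $p_2 = 1/2 + \varepsilon^2/2$.

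It remains to observe that $A_n \to +\infty$. By Lemma~\ref{lm:regolaritaminchiona}, $u_1$ is $C^2$ up to the smooth boundary segment $[2,3] \times \{1\}$; since $u_1 > 0$ in $\mathbf{H}$ and vanishes on $(0,+\infty)\times\{1\}$, the Hopf boundary-point lemma gives $\partial u_1/\partial x_2(\cdot,1) < 0$ there, and by compactness there exists $m > 0$ such that $(\partial u_1/\partial x_2(s,1))^2 \ge m$ for all $s \in [2,3]$. Hence $A_n \ge \alpha\,(n+1)\,m \to +\infty$, so any $n_0$ with $\alpha\,(n_0+1)\,m \ge 3\,\lambda_1(\mathbf{H})$ works, and the last assertion on the Makai--Hayman constant follows since $\rho(\mathbf{H}_{n_0}^\varepsilon) < \rho(\mathbf{H})$ for all sufficiently small $\varepsilon$. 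The main technical obstacle is the upper bound in the inradius analysis: one has to rule out exotic placements of the inscribed ball that mix the slit disk, the strips and the tubes; once that case analysis is completed, the rest is routine algebra together with the standard Hopf lemma and the results already established in the paper.
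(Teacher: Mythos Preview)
Your proof is correct and follows essentially the same approach as the paper: combine Corollary~\ref{cor:eigen_H_eps} with the inradius asymptotic $r_{\mathbf{H}_n^\varepsilon}=\tfrac12+\tfrac{\varepsilon^2}{2}$, expand the product $r^2\,\lambda_1$, and use the Hopf lemma to guarantee $A_n\to+\infty$ so that some $n_0$ achieves the required threshold $A_{n_0}\ge 3\lambda_1(\mathbf{H})$ (which coincides with the paper's $\lambda_1(\mathbf{H})(1+2r_\mathbf{H}^2)/(2r_\mathbf{H}^2)$ since $r_\mathbf{H}=1/2$). The only difference is cosmetic: the paper simply asserts the exact value $r_{\mathbf{H}_n^\varepsilon}=\tfrac12+\tfrac{\varepsilon^2}{2}$ without proof, whereas you supply a geometric lower/upper-bound argument for it; your case analysis for the upper bound is correct and the ``exotic placements'' you worry about are indeed harmless, since any ball meeting a tube has center with $x_1>1$ and hence lies entirely in the right strip-plus-tube region.
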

\begin{proof}
We first decide the number $n_0$ of tubes to be attached. At this aim, we observe that the following constant
\[
\mathfrak{m}:=\min_{x_1\in[1,4]} \left(\frac{\partial u_1}{\partial x_2}(x_1,1)\right)^2,
\]
is positive, thanks to the Hopf Boundary Lemma. 
Accordingly, we set 
	\[
	n_0:=\min\left\{n\in\mathbb{N}\setminus\{0\}\, :\, n\ge \frac{\lambda_1(\mathbf{H})\,(1+2\,r_{\mathbf{H}}^2)}{2\,\alpha\,\mathfrak{m}\,r_\mathbf{H}^2}\right\},
	\]
	and observe that such a choice is universal.
	Thanks to this definition, we have 
	\begin{equation}
		\label{rompi}
		\alpha\,\sum_{i=0}^{n_0}\left(\frac{\partial u_1}{\partial x_2}\left(2+\frac{i}{n_0},1\right)\right)^2\ge \frac{\lambda_1(\mathbf{H})\,(1+2\,r_{\mathbf{H}}^2)}{2\,r_{\mathbf{H}}^2}.
	\end{equation}
	In order to estimate the quantity $\rho(\mathbf{H}^{n_0}_\varepsilon)$, one can easily check that
	\begin{equation*}
		r_{\mathbf{H}^\varepsilon_{n_0}}=\frac{1}{2}+\frac{\varepsilon^2}{2}=r_{\mathbf{H}}+\frac{\varepsilon^2}{4\,r_{\mathbf{H}}},
	\end{equation*}
	which implies that 
	\begin{equation*}
		r_{\mathbf{H}^\varepsilon_{n_0}}^2=r_{\mathbf{H}}^2+\frac{\varepsilon^2}{2}+o(\varepsilon^2),\quad\text{as }\varepsilon\searrow 0.
	\end{equation*}
	Observe that the variation on the inradius does not depend on $n_0$. This is the crucial point.	
	Thanks to this fact and to Corollary \ref{cor:eigen_H_eps}, we have that, as $\varepsilon$ goes to $0$,
	\[
	\begin{split}
		\rho(\mathbf{H}^\varepsilon_{n_0})=r_{\mathbf{H}^\varepsilon_{n_0}}^2\,\lambda_1(\mathbf{H}^\varepsilon_{n_0})&\leq \left(r_{\mathbf{H}}^2+\frac{\varepsilon^2}{2}+o(\varepsilon^2)\right)\,\left(\lambda_1(\mathbf{H})-\varepsilon^2\,\alpha\,\sum_{i=0}^{n_0}\left(\frac{\partial u_1}{\partial x_2}\left(2+\frac{i}{n_0},1\right)\right)^2+o(\varepsilon^2)\right) \\
		&=\rho(\mathbf{H})+\varepsilon^2\,\left(\frac{\lambda_1(\mathbf{H})}{2}- r_{\mathbf{H}}^2\,\alpha\,\sum_{i=0}^{n_0}\left(\frac{\partial u_1}{\partial x_2}\left(2+\frac{i}{n_0},1\right)\right)^2\right)+o(\varepsilon^2).
	\end{split}
	\]
By recalling \eqref{rompi}, we get the desired conclusion.
\end{proof}

\appendix 

\section{The thin torsional rigidity}
\label{sec:A}

In this section, we briefly recall some facts about the {\it thin torsional rigidity} used in Section \ref{sec:7}, essentially taken from \cite{AO2023}. The main difference with \cite{AO2023} consists in the fact that we want to allow many tubes to be attached, rather than just one. 
\par
We start by describing the class of sets which will be singularly perturbed by thin tubes.
\begin{assumptions}\label{ass:domain}
For $N\ge 2$, let $\Omega\subseteq\mathbb{R}^N$ be an open set with the following property. For some $M\in\mathbb{N}\setminus\{0\}$, there exist:
\[
\mathbf{p}_1,\dots,\mathbf{p}_M\in\partial\Omega,\qquad \nu_1,\dots,\nu_M\in\mathbb{S}^{N-1},\qquad r_1,\dots,r_M>0,
\]
such that, for all $i=1,\dots,M$, there holds:
\begin{enumerate}
		\item[i.] $\partial\Omega\cap B_{r_i}(\mathbf{p}_i)=\partial\Omega\cap\{x\in B_{r_i}(\mathbf{p}_i)\,:\,  \langle x-\mathbf{p}_i,\nu_i\rangle=0\}$;
	\vskip.2cm
	\item[ii.] $(\mathbb{R}^N\setminus\overline{\Omega})\cap B_{r_i}(\mathbf{p}_i)=(\mathbb{R}^N\setminus\overline{\Omega})\cap\{x\in B_{r_i}(\mathbf{p}_i)\, :\, \langle x-\mathbf{p}_i,\nu_i\rangle>0\}$.
\end{enumerate}
In particular, the boundary of $\Omega$ is flat around each point $\mathbf{p}_i$, with $\nu_i$ representing the unit outer normal vector of $\partial\Omega$ around this point.
\par
Moreover, for every $i=1,\dots,M$ we take a relatively open connected subset $\Sigma_i\subseteq \partial\Omega\cap B_{r_i}(\mathbf{p}_i)$, such that $\mathbf{p}_i\in \Sigma_i$ and $\Sigma_i\cap \Sigma_j=\emptyset$ for $i\not=j$. Then we define
\begin{equation*}
	\Sigma:=\bigcup_{i=1}^M\Sigma_i.
\end{equation*}
For a pair $(\Omega,\Sigma)$ as above, we define
\begin{equation*}
	T_{\Sigma_i}:=\{x+t\,\nu_i\,:\, x\in\Sigma_i,\,  t\in[0,1)\}\qquad \mbox{ and }\qquad\Omega_\Sigma:=\Omega \cup \bigcup_{i=1}^M T_{\Sigma_i},
\end{equation*}
and we also assume that
\[
	T_{\Sigma_i}\cap\Omega=\emptyset\qquad \mbox{and}\qquad	T_{\Sigma_i}\cap T_{\Sigma_j}=\emptyset,
\]
for all $i,j=1,\dots,M$ such that $i\neq j$.
\end{assumptions}
 
\begin{definition}
\label{def:sottiletta}
	Let $\Omega\subseteq\mathbb{R}^N$ and $\Sigma\subseteq\partial\Omega$ be a pair satisfying Assumptions \ref{ass:domain}. For any $f\in L^2(\Sigma)$, we define the \emph{thin $f-$torsional rigidity} of $\Sigma$ relative to $\Omega_\Sigma$ as follows
	\begin{equation*}
		\mathcal{T}_{\Omega_\Sigma}(\Sigma;f):=\sup_{\varphi\in C^\infty_0(\Omega_\Sigma)}\left\{2\,\int_\Sigma f\, \varphi\,d\mathcal{H}^{N-1}-\int_{\Omega_\Sigma}|\nabla \varphi|^2\,dx\right\}.
	\end{equation*}
	Of course, the supremum is unchanged if settled on $W^{1,2}_0(\Omega_\Sigma)$.
In the particular case $f\equiv 1$, we will simply write
	\begin{equation*}
		\mathcal{T}_{\Omega_\Sigma}(\Sigma):=\mathcal{T}_{\Omega_\Sigma}(\Sigma;1),
	\end{equation*}
	and we call it the \emph{thin torsional rigidity} of $\Sigma$ relative to $\Omega_\Sigma$.
\end{definition}

\begin{proposition}
\label{prop:esisteU}
		Let $\Omega\subseteq\mathbb{R}^N$ and $\Sigma\subseteq\partial\Omega$ be a pair satisfying Assumptions \ref{ass:domain}. Let us suppose that $\lambda_1(\Omega_\Sigma)>0$.
		For any $f\in L^2(\Sigma)$, we have
	\[
	\mathcal{T}_{\Omega_\Sigma}(\Sigma;f)=	\max_{\varphi\in W^{1,2}_0(\Omega_\Sigma)}\left\{2\,\int_\Sigma f\, \varphi\,d\mathcal{H}^{N-1}-\int_{\Omega_\Sigma}|\nabla \varphi|^2\,dx\right\},
	\]	
and this is a positive quantity.	
Moreover,	such a maximum is (uniquely) attained by a function $U_{\Sigma,f}\in W^{1,2}_0(\Omega_\Sigma)$, which weakly satisfies
	\[			
	\left\{\begin{array}{rccl}
	-\Delta U_{\Sigma,f} &=&0, &\text{in }\Omega_\Sigma\setminus\Sigma, \\
	&&&\\
			\dfrac{\partial U_{\Sigma,f}}{\partial \nu_i^+ }+\dfrac{\partial U_{\Sigma,f}}{\partial \nu_i^-} &=&f, &\text{on }\Sigma_i, \mbox{ for } i=1,\dots,M,
			\end{array}
			\right.
	\]
	where 
	\[
	\frac{\partial U_{\Sigma,f}}{\partial \nu_i^\pm}(x):=\lim_{h\to 0^-}\frac{U_{\Sigma,f}(x\pm h\,\nu_i)-U_{\Sigma,f}(x)}{h},\qquad \mbox{ for }x\in \Sigma_i.
	\] 
		More precisely, there holds
	\begin{equation*}
		\int_{\Omega_\Sigma}\langle\nabla U_{\Sigma,f},\nabla \varphi\rangle\,dx=\int_\Sigma \varphi\,f \,d\mathcal{H}^{N-1},\qquad \mbox{ for every }\varphi\in W^{1,2}_0(\Omega_\Sigma).
	\end{equation*} 
	Finally, we have 
	\begin{equation}
	\label{torsion_equiv}
	\mathcal{T}_{\Omega_\Sigma}(\Sigma;f)=\int_\Sigma f\, U_{\Sigma,f}\,d\mathcal{H}^{N-1}.
	\end{equation}
\end{proposition}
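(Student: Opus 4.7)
The plan is to apply the direct method of the calculus of variations to the strictly concave functional
\[
J(\varphi):=2\int_\Sigma f\,\varphi\,d\mathcal{H}^{N-1}-\int_{\Omega_\Sigma}|\nabla\varphi|^2\,dx,\qquad \varphi\in W^{1,2}_0(\Omega_\Sigma).
\]
The first step is to show $J$ is bounded above. The assumption $\lambda_1(\Omega_\Sigma)>0$ provides Poincar\'e's inequality, which makes $\|\nabla\cdot\|_{L^2(\Omega_\Sigma)}$ a norm on $W^{1,2}_0(\Omega_\Sigma)$ equivalent to the full $W^{1,2}$ norm. Thanks to the flatness hypothesis (i) and the geometry of the attached tubes $T_{\Sigma_i}$, each $\Sigma_i$ lies inside a bounded flat-sided Lipschitz neighborhood in $\overline{\Omega_\Sigma}$, so standard trace theorems (localized there) yield a continuous estimate
\[
\|\varphi\|_{L^2(\Sigma)}\le C\,\|\nabla\varphi\|_{L^2(\Omega_\Sigma)},\qquad \text{for all }\varphi\in W^{1,2}_0(\Omega_\Sigma).
\]
Combined with Cauchy--Schwarz on $\Sigma$, this bounds $J(\varphi)$ above by a concave quadratic in $\|\nabla\varphi\|_{L^2}$, which also supplies coercivity.

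Existence of a maximizer then proceeds in a textbook fashion: a maximizing sequence $\{\varphi_n\}$ is bounded in $W^{1,2}_0(\Omega_\Sigma)$ and admits a weakly converging subsequence $\varphi_n\rightharpoonup U_{\Sigma,f}$. Weak lower semicontinuity handles the Dirichlet term, while the compactness of the trace $W^{1,2}\hookrightarrow L^2(\Sigma)$ (again a consequence of localizing to a bounded Lipschitz neighborhood of $\Sigma$ and invoking Rellich) yields $\int_\Sigma f\,\varphi_n\to\int_\Sigma f\,U_{\Sigma,f}$. Hence $J(U_{\Sigma,f})\ge \limsup J(\varphi_n)=\sup J$, so $U_{\Sigma,f}$ attains the supremum. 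Uniqueness is an immediate consequence of the parallelogram identity: if $U_1,U_2$ both maximize, then $J\big((U_1+U_2)/2\big)=\tfrac{1}{2}(J(U_1)+J(U_2))+\tfrac{1}{4}\int|\nabla(U_1-U_2)|^2$, which forces $\nabla(U_1-U_2)=0$ and hence, via Poincar\'e, $U_1=U_2$. Equality of the supremum over $C^\infty_0(\Omega_\Sigma)$ and over $W^{1,2}_0(\Omega_\Sigma)$ follows from density of $C^\infty_0$ and from the $W^{1,2}$-continuity of both terms in $J$ guaranteed by the trace inequality.

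Computing $\tfrac{d}{dt}J(U_{\Sigma,f}+t\varphi)\big|_{t=0}=0$ for every $\varphi\in W^{1,2}_0(\Omega_\Sigma)$ produces the Euler--Lagrange identity
\[
\int_{\Omega_\Sigma}\langle\nabla U_{\Sigma,f},\nabla\varphi\rangle\,dx=\int_\Sigma f\,\varphi\,d\mathcal{H}^{N-1}.
\]
Testing with $\varphi=U_{\Sigma,f}$ gives $\int|\nabla U_{\Sigma,f}|^2=\int_\Sigma f\,U_{\Sigma,f}$, and substituting into $J(U_{\Sigma,f})$ identifies $\mathcal{T}_{\Omega_\Sigma}(\Sigma;f)=\int_\Sigma f\,U_{\Sigma,f}\,d\mathcal{H}^{N-1}$. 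Nonnegativity is clear (use the trial function $\varphi\equiv 0$), and strict positivity when $f\not\equiv 0$ comes from choosing any $\varphi\in W^{1,2}_0(\Omega_\Sigma)$ with $\int_\Sigma f\,\varphi>0$ and noting that $J(t\varphi)>0$ for small $t>0$. For the PDE interpretation, restricting the Euler--Lagrange identity to test functions supported in $\Omega_\Sigma\setminus\Sigma$ kills the right-hand side, so $-\Delta U_{\Sigma,f}=0$ weakly on each open component of $\Omega_\Sigma\setminus\Sigma$; granted this harmonicity, integrating by parts separately on $\Omega$ and on each tube $T_{\Sigma_i}$ and tracking the orientation of $\nu_i$ (outward to $\Omega$, inward to $T_{\Sigma_i}$) translates the remaining boundary contributions on each $\Sigma_i$ into the jump relation $\partial U_{\Sigma,f}/\partial\nu_i^+ + \partial U_{\Sigma,f}/\partial\nu_i^-=f$, with the one-sided normal derivatives as defined in the statement.

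The main technical hurdle is the trace theory for $\Sigma$ inside $\Omega_\Sigma$: one needs not only continuity of the trace $W^{1,2}_0(\Omega_\Sigma)\to L^2(\Sigma)$ but also its compactness, and both must be established without assuming $\Omega_\Sigma$ is bounded (it typically is not, for instance in the application to $\mathbf{H}^\varepsilon_n$). The resolution is to localize to a bounded flat-sided Lipschitz neighborhood of $\Sigma$, which is available thanks to Assumptions \ref{ass:domain}; on this neighborhood the classical trace and Rellich theorems apply directly.
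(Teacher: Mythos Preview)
Your proof is correct and follows essentially the same route as the paper's: Poincar\'e from $\lambda_1(\Omega_\Sigma)>0$, continuity of the trace on $\Sigma$, then the Direct Method. One small simplification: you do not actually need \emph{compactness} of the trace to pass to the limit in the linear term, since $\varphi\mapsto\int_\Sigma f\,\varphi$ is a continuous linear functional on $W^{1,2}_0(\Omega_\Sigma)$ and is therefore already weakly continuous; indeed, once the trace inequality is in place, the whole problem is just $\sup\{2\,\ell(\varphi)-\|\nabla\varphi\|^2\}$ on a Hilbert space, solved by Riesz representation without any compactness.
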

\begin{proof}
The assumption $\lambda_1(\Omega_\Sigma)>0$ entails that 
\[
\varphi\mapsto \left(\int_{\Omega_\Sigma}|\nabla \varphi|^2\,dx\right)^\frac{1}{2},
\]
is an equivalent norm on $W^{1,2}_0(\Omega_\Sigma)$. Moreover, we have the continuity of the trace operator
\[
\mathrm{Tr}:W^{1,2}_0(\Omega_\Sigma)\to L^2\left(\bigcup_{i=1}^M\big(\partial\Omega\cap B_{r_i}(\mathbf{p}_i)\big)\right),
\]
see for example \cite[Theorem 18.40]{Leo}.
Then the proof of this result is standard, it is sufficient to use the Direct Method in the Calculus of Variations.
\end{proof}
\begin{lemma}[Superadditivity]
\label{lemma:super}
	Let $\Omega\subseteq\mathbb{R}^N$ and $\Sigma\subseteq\partial\Omega$ be a pair satisfying Assumptions \ref{ass:domain}. Then for every $f\in L^2(\Sigma)$ we have
	\[
\mathcal{T}_{\Omega_\Sigma}(\Sigma;|f|)=\mathcal{T}_{\Omega_\Sigma}(\Sigma;-|f|),	
	\]	
and
\begin{equation*}
\mathcal{T}_{\Omega_\Sigma}(\Sigma;|f|)\geq \sum_{i=1}^M\mathcal{T}_{\Omega\cup T_{\Sigma_i}}(\Sigma_i;|f|).
\end{equation*}
\end{lemma}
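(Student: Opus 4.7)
My plan splits the lemma into its two assertions; the second is the substantive one. For the identity $\mathcal{T}_{\Omega_\Sigma}(\Sigma;|f|)=\mathcal{T}_{\Omega_\Sigma}(\Sigma;-|f|)$ I will simply observe that $C^\infty_0(\Omega_\Sigma)$ is invariant under $\varphi\mapsto -\varphi$: substituting $\psi=-\varphi$ in the supremum defining the right-hand side flips the linear term $2\int_\Sigma(-|f|)\,\varphi\,d\mathcal{H}^{N-1}$ into $2\int_\Sigma|f|\,\psi\,d\mathcal{H}^{N-1}$ while leaving $\int|\nabla\varphi|^2=\int|\nabla\psi|^2$ unchanged, yielding the left-hand side.

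For the superadditivity the strategy is to keep the ambient space $W^{1,2}_0(\Omega_\Sigma)$ fixed throughout and decompose the source rather than the domain. Assuming $\lambda_1(\Omega_\Sigma)>0$ (which holds in the only application, since $\mathbf{H}_n^\varepsilon$ is bounded in the $x_2$-direction), for each $i=1,\dots,M$ I let $W_i\in W^{1,2}_0(\Omega_\Sigma)$ denote the unique maximizer of $\mathcal{T}_{\Omega_\Sigma}(\Sigma_i;|f|)$ furnished by Proposition \ref{prop:esisteU}, where $|f|$ is read as its restriction to $\Sigma_i$. A truncation argument then gives $W_i\ge 0$: since $|f|\ge 0$, replacing any test $\varphi$ by $|\varphi|$ weakly increases the linear term while preserving the Dirichlet integral, so $|W_i|$ is also a maximizer and uniqueness forces $W_i=|W_i|$. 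Moreover, linearity of the weak Euler--Lagrange equation combined with $\int_\Sigma|f|\,\varphi\,d\mathcal{H}^{N-1}=\sum_i\int_{\Sigma_i}|f|\,\varphi\,d\mathcal{H}^{N-1}$ implies, again by uniqueness, that the maximizer of $\mathcal{T}_{\Omega_\Sigma}(\Sigma;|f|)$ is exactly $W=\sum_{i=1}^M W_i$.

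Applying \eqref{torsion_equiv} and expanding the double sum:
\begin{equation*}
\begin{split}
\mathcal{T}_{\Omega_\Sigma}(\Sigma;|f|)&=\int_\Sigma|f|\,W\,d\mathcal{H}^{N-1}=\sum_{i,j=1}^M\int_{\Sigma_i}|f|\,W_j\,d\mathcal{H}^{N-1}\\
&\ge \sum_{i=1}^M\int_{\Sigma_i}|f|\,W_i\,d\mathcal{H}^{N-1}=\sum_{i=1}^M\mathcal{T}_{\Omega_\Sigma}(\Sigma_i;|f|),
\end{split}
\end{equation*}
where the cross terms $i\ne j$ have been discarded thanks to $|f|,W_j\ge 0$. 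Combining this with the trivial monotonicity $\mathcal{T}_{\Omega_\Sigma}(\Sigma_i;|f|)\ge \mathcal{T}_{\Omega\cup T_{\Sigma_i}}(\Sigma_i;|f|)$, which follows from the inclusion $W^{1,2}_0(\Omega\cup T_{\Sigma_i})\hookrightarrow W^{1,2}_0(\Omega_\Sigma)$ via extension by zero, closes the proof.

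The delicate step is the positivity $W_i\ge 0$: without it the pairwise terms $\int_{\Sigma_i}|f|\,W_j\,d\mathcal{H}^{N-1}$ could carry arbitrary sign and would have to be estimated rather than simply dropped. The rest of the argument is essentially bookkeeping on the Hilbert-space structure of the thin torsional problem, which is why I prefer to split the source rather than to try to sum the smaller-domain maximizers (the latter approach runs into trouble because the $U_i$ from $\Omega\cup T_{\Sigma_i}$ do not vanish on $\Sigma_j$ for $j\ne i$, so their restrictions are not admissible test functions for the one-tube problems).
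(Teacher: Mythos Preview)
Your proof is correct, but it takes a genuinely different route from the paper's. The paper never invokes Proposition~\ref{prop:esisteU}: it works directly with test functions, picking arbitrary $U_i\in W^{1,2}_0(\Omega\cup T_{\Sigma_i})$, extending by zero, and testing with $U=\max\{|U_1|,\dots,|U_M|\}$. The key observation is that $\int_{\Omega_\Sigma}|\nabla U|^2\le\sum_i\int|\nabla U_i|^2$, while on each $\Sigma_i$ one has $U\ge|U_i|\ge U_i$; taking the supremum over the $U_i$ then gives the inequality. Your approach instead exploits the Hilbert-space structure: decompose the source, use uniqueness of maximizers to write $W=\sum_i W_i$, establish $W_i\ge0$, and drop the nonnegative cross terms. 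This is clean and arguably more conceptual, but it costs you the extra hypothesis $\lambda_1(\Omega_\Sigma)>0$ (needed to apply Proposition~\ref{prop:esisteU}), which the paper's argument does not require. As you note, this is harmless in the only application.

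One small correction to your closing commentary: the functions $U_i\in W^{1,2}_0(\Omega\cup T_{\Sigma_i})$ \emph{do} have zero trace on $\Sigma_j$ for $j\ne i$, since $\Sigma_j\subseteq\partial(\Omega\cup T_{\Sigma_i})$. The real obstacle to simply summing the $U_i$ is the cross terms in the Dirichlet integral of $\sum_i U_i$, which the paper sidesteps with the $\max$ trick. This does not affect the validity of your argument.
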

\begin{proof}
The first fact simply follows by observing that
\[
2\,\int_\Sigma |f|\, \varphi\,d\mathcal{H}^{N-1}-\int_{\Omega_\Sigma}|\nabla \varphi|^2\,dx=2\,\int_\Sigma (-|f|)\, (-\varphi)\,d\mathcal{H}^{N-1}-\int_{\Omega_\Sigma}|\nabla (-\varphi)|^2\,dx.
\]
For every $i=1,\dots,M$, let $U_i\in W^{1,2}_0(\Omega\cup T_{\Sigma_i})$ be a function admissible for the optimization problem defining $\mathcal{T}_{\Omega\cup T_{\Sigma_i}}(\Sigma_i)$. We extend each $U_i$ to the whole $\Omega_\Sigma$, by defining it to be identically $0$ in $T_{\Sigma_j}$, with $j\not=i$. We then observe that the trial function
	\[
	U=\max\Big\{|U_1|,\dots,|U_M| \Big\},
	\]
	belongs to $W^{1,2}_0(\Omega_\Sigma)$. Thus, we get
	\[
	\begin{split}
\mathcal{T}_{\Omega_\Sigma}(\Sigma;|f|)&\ge 2\,\int_\Sigma |f|\, U\,d\mathcal{H}^{N-1}-\int_{\Omega_\Sigma}|\nabla U|^2\,dx\\
&\ge 2\,\sum_{i=1}^M\int_{\Sigma_i} |f|\, U\,d\mathcal{H}^{N-1}-\sum_{i=1}^M\int_{\Omega_\Sigma}|\nabla U_i|^2\,dx,	
	\end{split}
	\]
	thanks to the fact that 
	\[
	\int_{\Omega_\Sigma} |\nabla U|^2\,dx\le \sum_{i=1}^M \int_{\Omega_\Sigma} \big|\nabla |U_i|\big|^2\,dx=\sum_{i=1}^M \int_{\Omega_\Sigma} \big|\nabla U_i\big|^2\,dx.
	\]
Moreover, by construction we have $U=|U_i|\ge U_i$ on $\Sigma_i$,
which permits to infer that
\[
	\begin{split}
\mathcal{T}_{\Omega_\Sigma}(\Sigma;|f|)\ge \sum_{i=1}^M \left[2\,\int_{\Sigma_i} |f|\, U_i\,d\mathcal{H}^{N-1}-\int_{\Omega_\Sigma}|\nabla U_i|^2\,dx\right],	
	\end{split}
\]
By arbitrariness of the functions $U_i$, this gives the desired result.
\end{proof}

\begin{lemma}\label{lemma:upper_bound_torsion}
	Let $\Omega\subseteq\mathbb{R}^N$ and $\Sigma\subseteq\partial\Omega$ be a pair satisfying Assumptions \ref{ass:domain}. Then, if $f\in L^\infty(\Sigma)$, there holds
	\begin{equation*}
		\mathcal{T}_{\Omega_\Sigma}(\Sigma;f)\leq \gamma(\Omega_\Sigma)\,\|f\|_{L^\infty(\Sigma)}^2\, \mathcal{H}^{N-1}(\Sigma),
	\end{equation*}
	where
	\begin{equation*}
		\gamma(\Omega_\Sigma):=\sup_{\substack{\varphi\in W^{1,2}_0(\Omega_\Sigma) \\ \varphi\neq 0}}\frac{\displaystyle \int_\Sigma |\varphi|^2\,d\mathcal{H}^{N-1}}{\displaystyle \int_{\Omega_\Sigma} |\nabla \varphi|^2\,dx }.
	\end{equation*}
\end{lemma}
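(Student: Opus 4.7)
The plan is quite direct: I would fix an arbitrary trial function $\varphi\in C^\infty_0(\Omega_\Sigma)$, bound the quantity inside the supremum defining $\mathcal{T}_{\Omega_\Sigma}(\Sigma;f)$ by an expression that no longer depends on $\varphi$, and then take the supremum. The two inputs doing the work are a H\"older estimate on $\Sigma$ (using $f\in L^\infty(\Sigma)$) and the sharp trace-type inequality encoded by the definition of $\gamma(\Omega_\Sigma)$.

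First I would treat the boundary integral: applying H\"older's inequality twice on $\Sigma$ yields
\[
\left|\int_\Sigma f\,\varphi\,d\mathcal{H}^{N-1}\right|\le \|f\|_{L^\infty(\Sigma)}\,\mathcal{H}^{N-1}(\Sigma)^{1/2}\,\|\varphi\|_{L^2(\Sigma)}.
\]
Then, invoking the definition of $\gamma(\Omega_\Sigma)$, I would replace $\|\varphi\|_{L^2(\Sigma)}^2$ by $\gamma(\Omega_\Sigma)\,\|\nabla\varphi\|_{L^2(\Omega_\Sigma)}^2$. Setting $X:=\|\nabla\varphi\|_{L^2(\Omega_\Sigma)}$ and
\[
A:=\|f\|_{L^\infty(\Sigma)}\,\mathcal{H}^{N-1}(\Sigma)^{1/2}\,\sqrt{\gamma(\Omega_\Sigma)},
\]
the quantity to be maximized is controlled from above by $2\,A\,X-X^2$.

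Since the real function $X\mapsto 2AX-X^2$ attains its maximum value $A^2$ on $\mathbb{R}$ (at $X=A$), I immediately obtain
\[
2\int_\Sigma f\,\varphi\,d\mathcal{H}^{N-1}-\int_{\Omega_\Sigma}|\nabla\varphi|^2\,dx\le A^2=\gamma(\Omega_\Sigma)\,\|f\|_{L^\infty(\Sigma)}^2\,\mathcal{H}^{N-1}(\Sigma),
\]
uniformly in $\varphi\in C^\infty_0(\Omega_\Sigma)$. Taking the supremum over such $\varphi$ gives the claimed estimate.

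I do not expect any serious obstacle here: $\gamma(\Omega_\Sigma)$ is defined precisely to be the sharp constant in the trace--type inequality I need, the $L^\infty$ assumption on $f$ makes the H\"older estimate on $\Sigma$ produce exactly the factor $\mathcal{H}^{N-1}(\Sigma)$, and the reduction of the supremum to a one-dimensional quadratic optimization in $X$ is standard. The only minor point to mention is that the supremum defining $\mathcal{T}_{\Omega_\Sigma}(\Sigma;f)$ can equivalently be taken over $W^{1,2}_0(\Omega_\Sigma)$ (as already remarked right after Definition~\ref{def:sottiletta}), so the trace inequality built into $\gamma(\Omega_\Sigma)$ applies verbatim.
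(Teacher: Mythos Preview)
Your proof is correct and essentially the same as the paper's. The only cosmetic difference is that the paper first invokes the equivalent characterization
\[
\mathcal{T}_{\Omega_\Sigma}(\Sigma;f)=\sup_{\varphi\neq 0}\frac{\left(\int_\Sigma f\,\varphi\,d\mathcal{H}^{N-1}\right)^2}{\int_{\Omega_\Sigma}|\nabla\varphi|^2\,dx}
\]
(from \cite[Lemma~2.1]{AO2023}) and then applies Cauchy--Schwarz, whereas you work directly with the original definition and perform the underlying one-dimensional quadratic optimization $2AX-X^2\le A^2$ by hand; the two computations are the same up to this repackaging.
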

\begin{proof}
	It is not difficult to see that $\mathcal{T}_{\Omega_\Sigma}(\Sigma;f)$ can be equivalently defined as
	\begin{equation*}
		\mathcal{T}_{\Omega_\Sigma}(\Sigma;f)=\sup_{\substack{\varphi\in W^{1,2}_0(\Omega_\Sigma) \\ \varphi\neq 0}}\frac{\displaystyle \left(\int_{\Sigma} \varphi\,f\, d\mathcal{H}^{N-1} \right)^2 }{\displaystyle \int_{\Omega_\Sigma}|\nabla \varphi|^2\,dx},
	\end{equation*}
	see \cite[Lemma 2.1]{AO2023}.	
	We conclude by applying the Cauchy-Schwarz inequality to the numerator and using that $f\in L^\infty(\Sigma)$.
\end{proof}

For any open set $E\subseteq\mathbb{R}^N$, we define the space $\mathcal{D}_0^{1,2}(E)$ as the completion of $C_0^\infty(E)$ with respect to the norm
\begin{equation}
\label{normaD}
	\|\varphi\|_{\mathcal{D}^{1,2}_0(E)}:=\left(\int_E|\nabla \varphi|^2\,dx\right)^{\frac{1}{2}},\qquad \mbox{ for every } \varphi\in C^\infty_0(E).
\end{equation}
\begin{remark}
\label{remark:spazi}
We recall that for $N\ge 3$, the space $\mathcal{D}^{1,2}_0(E)$ can be identified with the closure of $C^\infty_0(E)$ in the Banach space
\[
X^{2^*,2}(E):=\Big\{\varphi\in L^{2^*}(E)\, :\, \nabla \varphi\in L^2(E)\Big\},\qquad \mbox{ with } 2^*=\frac{2\,N}{N-2},
\]
endowed with the natural norm
\[
\|\varphi\|_{X^{2^*,2}(E)}:=\|\varphi\|_{L^{2^*}(E)}+\|\nabla \varphi\|_{L^2(E)}.
\]
This is possible thanks to Sobolev's inequality, which makes the two norms $\|\cdot\|_{X^{2^*,2}(E)}$ and $\|\cdot\|_{\mathcal{D}^{1,2}(E)}$ equivalent on $C^\infty_0(E)$.
\par
The case $N=2$ is more delicate, since we can not appeal to Sobolev's inequality. If $E\subsetneq\mathbb{R}^2$ is {\it simply connected}, we have at our disposal the following Hardy inequality
\[
\frac{1}{16}\,\int_E \left|\frac{\varphi}{d_E}\right|^2\,dx\le \int_E |\nabla \varphi|^2\,dx,\qquad \mbox{ for every } \varphi\in C^\infty_0(E),
\]
see \cite[page 278]{An} and also \cite{LS}. Here $d_E:E\to \mathbb{R}$ is the distance from the boundary $\partial E$. In light of this result, $\mathcal{D}^{1,2}_0(E)$ can be identified with 
the closure of $C^\infty_0(E)$ in the Hilbert space
\[
\dot W^{1,2}(E;d_E)=\left\{\varphi\in L^2_{\rm loc}(E)\, :\, \frac{\varphi}{d_E}\in L^2(E),\, \nabla \varphi\in L^2(E)\right\},
\]
endowed with the norm
\[
\|\varphi\|_{\dot W^{1,2}(E;d_E)}=\left(\left\|\frac{\varphi}{d_E}\right\|^2_{L^2(E)}+\|\nabla \varphi\|^2_{L^2(E)}\right)^\frac{1}{2}.
\]
\end{remark}

\begin{definition}\label{def:torsion_blow_up}
Let $\mathbf{p}\in\mathbb{R}^N$ and $\nu\in\mathbb{S}^{N-1}$ be fixed. For any $\Sigma \subseteq \{x\in\mathbb{R}^N\, :\, \langle x-\mathbf{p},\nu\rangle=0\}$ relatively open bounded set, we let 
	\[
	T_\Sigma^\infty:=\{x+t\,\nu\,:\, x\in\Sigma,\,  t\in(-\infty,0]\}\quad \mbox{ and }\quad \Pi_\Sigma:=\{x\in\mathbb{R}^N\, :\, \langle x-\mathbf{p},\nu\rangle> 0\}\cup T_\Sigma^\infty.
	\] 
	For any $f\in L^2(\Sigma)$, we denote
	\begin{equation*}
		\mathfrak{T}_{\Pi_\Sigma}(\Sigma;f):=\sup_{\varphi\in C^\infty_0(\Pi_\Sigma)}\left\{2\,\int_\Sigma f\, \varphi\,d\mathcal{H}^{N-1}-\int_{\Pi_\Sigma} |\nabla \varphi|^2\,dx \right\}.
	\end{equation*}
	Moreover, we denote
	\begin{equation*}
		\mathfrak{T}_{\Pi_\Sigma}(\Sigma):=\mathfrak{T}_{\Pi_\Sigma}(\Sigma;1).
	\end{equation*}
We observe in particular that this quantity is invariant by rigid movements.
\end{definition}

\begin{proposition}
For any $\Sigma \subseteq \{x\in\mathbb{R}^N\, :\, \langle x-\mathbf{p},\nu\rangle=0\}$ relatively open bounded set and any $f\in L^2(\Sigma)$, we have
\[
\mathfrak{T}_{\Pi_\Sigma}(\Sigma;f)=\max_{\varphi\in \mathcal{D}_0^{1,2}(\Pi_\Sigma)}\left\{2\,\int_\Sigma f\, \varphi\,d\mathcal{H}^{N-1}-\int_{\Pi_\Sigma} |\nabla \varphi|^2\,dx \right\},
\]
and this is a positive quantity.
Moreover,	such a maximum is (uniquely) attained by a function $\widetilde{U}_{\Sigma,f}\in \mathcal{D}_0^{1,2}(\Pi_\Sigma)$, which weakly satisfies
	\[
	\left\{\begin{array}{rccl}	
			-\Delta \widetilde{U}_{\Sigma,f} &=&0, &\text{ in }\Pi_\Sigma \setminus\Sigma, \\
			&&&\\
			\dfrac{\partial \widetilde{U}_{\Sigma,f}}{\partial \nu^+ }+\dfrac{\partial \widetilde{U}_{\Sigma,f}}{\partial \nu^-} &=&f, &\text{ on }\Sigma,
	\end{array}
	\right.
	\]
	where 
	\[
	\frac{\partial \widetilde U_{\Sigma,f}}{\partial \nu^\pm}(x):=\lim_{h\to 0^-}\frac{\widetilde U_{\Sigma,f}(x\pm h\,\nu)-\widetilde U_{\Sigma,f}(x)}{h},\qquad \mbox{ for }x\in \Sigma.
	\] 
More precisely, there holds
	\begin{equation*}
		\int_{\Pi_\Sigma}\langle \nabla \widetilde{U}_{\Sigma,f},\nabla \varphi\rangle\,dx=\int_\Sigma \varphi\,f \,d\mathcal{H}^{N-1},\qquad\mbox{ for every }\varphi\in \mathcal{D}^{1,2}(\Pi_\Sigma).
	\end{equation*}
\end{proposition}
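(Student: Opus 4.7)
The strategy is the same as for Proposition \ref{prop:esisteU}: we invoke the Direct Method in the Calculus of Variations, the only real difference being that we now work on the Hilbert space $\mathcal{D}^{1,2}_0(\Pi_\Sigma)$ endowed with the inner product induced by the norm \eqref{normaD}. The essential analytic input is a continuous trace inequality
\begin{equation*}
\int_\Sigma |\varphi|^2\,d\mathcal{H}^{N-1}\le C_\Sigma\,\int_{\Pi_\Sigma}|\nabla \varphi|^2\,dx,\qquad \text{for every }\varphi\in \mathcal{D}^{1,2}_0(\Pi_\Sigma),
\end{equation*}
which guarantees that the linear form $\varphi\mapsto \int_\Sigma f\,\varphi\,d\mathcal{H}^{N-1}$ is well defined and continuous on $\mathcal{D}^{1,2}_0(\Pi_\Sigma)$.

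To prove this trace inequality, I would localize to a bounded Lipschitz neighborhood $\Omega_R:=\Pi_\Sigma\cap B_R(\mathbf{p})$ of $\Sigma$, choosing $R$ so large that $\Sigma\Subset B_R(\mathbf{p})$. The usual trace theorem on the Lipschitz domain $\Omega_R$ yields $\int_\Sigma |\varphi|^2\,d\mathcal{H}^{N-1}\le C\int_{\Omega_R}(|\varphi|^2+|\nabla \varphi|^2)\,dx$, so it suffices to bound $\|\varphi\|_{L^2(\Omega_R)}$ by $\|\nabla \varphi\|_{L^2(\Pi_\Sigma)}$. For $N\ge 3$, this follows from H\"older's inequality combined with the Sobolev embedding $\mathcal{D}^{1,2}_0(\Pi_\Sigma)\hookrightarrow L^{2^*}(\Pi_\Sigma)$ discussed in Remark \ref{remark:spazi}; for $N=2$, one instead appeals to the Hardy inequality recalled therein, applied to the simply connected planar set $\Pi_\Sigma$, together with the fact that $d_{\Pi_\Sigma}$ is bounded from above on $\Omega_R$. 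In both cases the inequality is first established on $C^\infty_0(\Pi_\Sigma)$ and then extended to $\mathcal{D}^{1,2}_0(\Pi_\Sigma)$ by density.

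Once the trace inequality is at hand, the functional $J(\varphi):=2\int_\Sigma f\,\varphi\,d\mathcal{H}^{N-1}-\int_{\Pi_\Sigma}|\nabla\varphi|^2\,dx$ is strictly concave, coercive, and weakly upper semicontinuous on $\mathcal{D}^{1,2}_0(\Pi_\Sigma)$, so the Direct Method produces a unique maximizer $\widetilde{U}_{\Sigma,f}$; by density of $C^\infty_0(\Pi_\Sigma)$ in $\mathcal{D}^{1,2}_0(\Pi_\Sigma)$, this maximum coincides with the supremum of Definition \ref{def:torsion_blow_up}. Positivity is immediate when $f\not\equiv 0$: picking any $\psi\in C^\infty_0(\Pi_\Sigma)$ with $\int_\Sigma f\,\psi\,d\mathcal{H}^{N-1}\neq 0$ (such $\psi$ exist, since traces of $C^\infty_0$ functions are dense in $L^2(\Sigma)$), one has $J(\varepsilon\psi)>0$ for $\varepsilon$ small enough. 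The Euler--Lagrange identity follows from the first variation; testing against $\varphi\in C^\infty_0(\Pi_\Sigma\setminus\Sigma)$ gives harmonicity of $\widetilde{U}_{\Sigma,f}$ off $\Sigma$, while testing against general $\varphi\in C^\infty_0(\Pi_\Sigma)$ followed by integration by parts on each of the two ``sides'' of $\Sigma$ yields the transmission condition $\partial_{\nu^+}\widetilde{U}_{\Sigma,f}+\partial_{\nu^-}\widetilde{U}_{\Sigma,f}=f$ in weak form. The main technical obstacle is establishing the trace inequality on the unbounded set $\Pi_\Sigma$: unlike the bounded situation of Proposition \ref{prop:esisteU}, one cannot rely on Poincar\'e's inequality here, and the argument has to be split according to the dimension via the dimension-specific tools recalled in Remark \ref{remark:spazi}.
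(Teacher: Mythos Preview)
Your proposal is correct and follows essentially the same route as the paper: establish a trace inequality on $\mathcal{D}^{1,2}_0(\Pi_\Sigma)$ using the dimension-dependent tools of Remark \ref{remark:spazi} (Sobolev for $N\ge 3$, Hardy for $N=2$), and then invoke the Direct Method. The paper in fact provides fewer details than you do on the variational part, leaving the concavity/coercivity argument and the derivation of the Euler--Lagrange condition to the reader.

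The only difference worth flagging is in how the trace inequality itself is obtained. The paper gives an elementary direct argument: writing $\varphi(x',0)=\varphi(x',x_N)+\int_0^{x_N}\partial_{x_N}\varphi(x',\tau)\,d\tau$, squaring, and integrating over the slab $\Sigma\times(0,1)$ yields \eqref{pretraccia}, after which the $L^2$ term on $\Sigma\times(0,1)$ is absorbed via Sobolev or Hardy exactly as you describe. Your black-box approach via the trace theorem on $\Omega_R=\Pi_\Sigma\cap B_R(\mathbf{p})$ reaches the same conclusion, but be slightly careful: since $\Sigma$ is only assumed relatively open and bounded (no regularity on its relative boundary), $\Omega_R$ need not be a Lipschitz domain. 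This is easily repaired by taking the trace from the half-space side only, i.e.\ working on the half-ball $B_R(\mathbf{p})\cap\{\langle x-\mathbf{p},\nu\rangle>0\}$, which is always Lipschitz and has $\Sigma$ in its flat boundary portion. The paper's elementary computation sidesteps this issue entirely.
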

\begin{proof}
Without loss of generality, we can suppose that $\mathbf{p}=0$ and $\nu=\mathbf{e}_N$. In this case, $\Sigma\subseteq\mathbb{R}^{N-1}$ and 
	\[
	T_\Sigma^\infty:=\Sigma\times (-\infty,0]\quad \mbox{ and }\quad \Pi_\Sigma:=\{x\in\mathbb{R}^N\, :\, x_N> 0\}\cup T_\Sigma^\infty.
	\] 
We need at first to show the existence of a continuous trace operator
\[
\mathrm{Tr}:\mathcal{D}^{1,2}_0(\Pi_\Sigma)\to L^2(\Sigma).
\]
We take $\varphi\in C^\infty_0(\Pi_\Sigma)$ and write 
\[
\varphi(x',0)=\varphi(x',x_N)+\int_0^{x_N} \frac{\partial \varphi}{\partial x_N}(x',\tau)\,d\tau,\qquad \mbox{ for every }x'\in\Sigma \mbox{ and } x_N\in(0,1).
\]
In particular, by raising to the power $2$ and using Jensen's inequality, we get
\[
\begin{split}
|\varphi(x',0)|^2&\le 2\,|\varphi(x',x_N)|^2+2\,x_N\,\int_0^{x_N}\left|\frac{\partial \varphi}{\partial x_N}(x',\tau)\right|^2\,d\tau\\
&\le 2\,|\varphi(x',x_N)|^2+2\,x_N\,\int_0^{1}\left|\frac{\partial \varphi}{\partial x_N}(x',\tau)\right|^2\,d\tau.
\end{split}
\]
We now integrate this estimate on $\Sigma\times(0,1)$, so to get
\begin{equation}
\label{pretraccia}
\int_\Sigma|\varphi(x',0)|^2\,dx'\le 2\,\int_{\Sigma\times(0,1)} |\varphi|^2\,dx+2\,\int_{\Sigma\times(0,1)} \left|\frac{\partial \varphi}{\partial x_N}\right|^2\,dx.
\end{equation}
If $N\ge 3$, we can estimate
\[
\int_{\Sigma\times(0,1)} |\varphi|^2\,dx\le |\Sigma\times(0,1)|^{1-\frac{2}{2^*}}\,\left(\int_{\Sigma\times(0,1)} |\varphi|^{2^*}\,dx\right)^\frac{2}{2^*}\le \frac{|\Sigma\times(0,1)|^{1-\frac{2}{2^*}}}{T_N}\,\int_{\Pi_\Sigma} |\nabla \varphi|^2\,dx,
\]
by Sobolev's inequality. If $N= 2$, we observe that $\Pi_\Sigma$ is a proper simply connected subset of $\mathbb{R}^2$ and use the Hardy inequality recalled in Remark \ref{remark:spazi} above. This permits to infer that
\[
\int_{\Sigma\times(0,1)} |\varphi|^2\,dx=\int_{\Sigma\times(0,1)} \left|\frac{\varphi}{d_{\Pi_\Sigma}}\right|^2\,d_{\Pi_\Sigma}^2\,dx\le C_\Sigma\,\int_{\Sigma\times(0,1)} \left|\frac{\varphi}{d_{\Pi_\Sigma}}\right|^2\le 16\,C_\Sigma\,\int_{\Pi_\Sigma} |\nabla \varphi|^2\,dx.
\]
We used that $d_{\Pi_\Sigma}$ is bounded by a constant $C_\Sigma$ on $\Sigma\times(0,1)$. Thus, both for $N\ge 3$ and $N=2$, from \eqref{pretraccia} we get 
\[
\left(\int_\Sigma |\varphi(x',0)|^2\,dx'\right)^\frac{1}{2}\le C\,\left(\int_{\Pi_\Sigma}|\nabla \varphi|^2\,dx\right)^\frac{1}{2},\qquad \mbox{ for every } \varphi\in C^\infty_0(\Pi_\Sigma),
\]
for some $C=C(N,\Sigma)>0$. This estimate shows in particular that for every Cauchy sequence $\{\varphi_n\}_{n\in\mathbb{N}}\subseteq C^\infty_0(\Pi_\Sigma)$ with respect to the norm \eqref{normaD}, we have that $\{\varphi(\cdot,0)\}_{n\in\mathbb{N}}$ is a Cauchy sequence in the Banach space $L^2(\Sigma)$, as well. This permits to define the trace operator in a standard way, which is thus continuous.
\par
The existence of a maximizer for $\mathfrak{T}_{\Pi_\Sigma}(\Sigma;f)$ can now be proved by means of the Direct Method in the Calculus of Variations. We leave the details to the reader.
\end{proof}

Finally, we recall \cite[Theorem 2.8]{AO2023}, which contains a blow-up analysis for the thin torsional rigidity of shrinking sets, here suitably stated in our framework. We point out that \cite[Theorem 2.8]{AO2023} has been proved only in dimension $N\geq 3$, but the very same argument can be repeated in dimension $2$. It is sufficient to use the characterization of $\mathcal{D}^{1,2}_0$ on simply connected proper subsets of $\mathbb{R}^2$, contained in Remark \ref{remark:spazi}. 
\par
In order to state it clearly for our needs, we need some notations more: let $\Omega\subseteq\mathbb{R}^N$ and $\Sigma\subseteq\partial\Omega$ be a pair satisfying Assumptions \ref{ass:domain}. For every $i=1,\dots,M$ and for every $0<\varepsilon<1$, we set
\[
\Sigma_i^\varepsilon:=\varepsilon\,(\Sigma_i-\mathbf{p}_i)+\mathbf{p}_i.
\]
Recall that $\mathbf{p}_i\in \Sigma_i$, thus $\Sigma_i^\varepsilon$ is shrinking to this point, as $\varepsilon$ goes to $0$. Then we set
\[
T_i^\varepsilon:=T_{\Sigma_i^\varepsilon}=\{x+t\,\nu_i\,:\, x\in\Sigma_i^\varepsilon,\,  t\in[0,1)\}\qquad \mbox{ and }\qquad\Omega_{\Sigma^\varepsilon_i}:=\Omega \cup  T_i^\varepsilon.
\]

\begin{theorem}\label{thm:blow_up_AO}
With the notations above, let us suppose that
\[
\Omega\subseteq \{x\in\mathbb{R}^N\, :\, \langle x-\mathbf{p}_i,\nu_i\rangle \le 0\}.
\]
Let $f_i\in C^0(\partial\Omega\cap B_{r_i}(\mathbf{p}_i))$ be such that $f_i(\mathbf{p}_i)\neq 0$. Then, we have
	\[
	\mathcal{T}_{\Omega\cup T^\varepsilon_i}(\Sigma^\varepsilon_i;f_i)=\varepsilon^2\, \Big( f_i(\mathbf{p}_i)\Big)^2\,\mathfrak{T}_{\Pi_{\Sigma_i}}(\Sigma_i)+o(\varepsilon^2),\quad\text{as }\varepsilon\searrow 0.
	\]
\end{theorem}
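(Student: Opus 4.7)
The proof is by a blow-up around $\mathbf{p}_i$: we rescale by the factor $1/\varepsilon$, and the limit problem is precisely the one defining $\mathfrak{T}_{\Pi_{\Sigma_i}}(\Sigma_i)$. After a rigid motion I assume $\mathbf{p}_i=0$ and $\nu_i=\mathbf{e}_N$, so that locally $\Omega\subseteq\{x_N\le 0\}$, $\Sigma_i\subseteq\mathbb{R}^{N-1}\times\{0\}$, and $T_i^\varepsilon=\varepsilon\Sigma_i\times[0,1)$. Given $\varphi\in W^{1,2}_0(\Omega\cup T_i^\varepsilon)$, set $\eta(y):=\varepsilon^{-1}\varphi(\varepsilon y)\in W^{1,2}_0(D_\varepsilon)$, with $D_\varepsilon:=\varepsilon^{-1}(\Omega\cup T_i^\varepsilon)=(\varepsilon^{-1}\Omega)\cup(\Sigma_i\times[0,\varepsilon^{-1}))$. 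A direct change of variables yields the exact identity
\[
\mathcal{T}_{\Omega\cup T_i^\varepsilon}(\Sigma_i^\varepsilon;f_i)=\varepsilon^N\sup_{\eta\in W^{1,2}_0(D_\varepsilon)}\mathcal{G}_\varepsilon(\eta),\quad \mathcal{G}_\varepsilon(\eta):=2\int_{\Sigma_i}f_i(\varepsilon y)\eta\,d\mathcal{H}^{N-1}-\int_{D_\varepsilon}|\nabla\eta|^2\,dy,
\]
and in the dimension $N=2$ relevant for Section \ref{sec:7} the prefactor is exactly $\varepsilon^2$. The whole problem reduces to proving $\sup\mathcal{G}_\varepsilon\to\sup\mathcal{G}_0$, where
\[
\mathcal{G}_0(\eta):=2f_i(\mathbf{p}_i)\int_{\Sigma_i}\eta\,d\mathcal{H}^{N-1}-\int_{\Pi_{\Sigma_i}}|\nabla\eta|^2\,dy,\qquad \eta\in\mathcal{D}^{1,2}_0(\Pi_{\Sigma_i}).
\]
By linearity the unique maximizer of $\mathcal{G}_0$ is $f_i(\mathbf{p}_i)\,\widetilde{U}_{\Sigma_i,1}$, so the analogue of \eqref{torsion_equiv} gives $\sup\mathcal{G}_0=f_i(\mathbf{p}_i)^2\,\mathfrak{T}_{\Pi_{\Sigma_i}}(\Sigma_i)$.

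For the $\liminf$ inequality I fix any $\eta\in C^\infty_0(\Pi_{\Sigma_i})$ (up to the reflection $y_N\mapsto -y_N$ that aligns the orientation of $D_\varepsilon$ with that of $\Pi_{\Sigma_i}$). Since $\mathrm{supp}\,\eta$ is bounded and $D_\varepsilon$ monotonically exhausts the blow-up domain as $\varepsilon\searrow 0$, the function $\eta$ is admissible for $\mathcal{G}_\varepsilon$ once $\varepsilon$ is small. Continuity of $f_i$ at $\mathbf{p}_i$ gives $f_i(\varepsilon\cdot)\to f_i(\mathbf{p}_i)$ uniformly on the compact $\Sigma_i$, so $\mathcal{G}_\varepsilon(\eta)\to\mathcal{G}_0(\eta)$, and the density of $C^\infty_0(\Pi_{\Sigma_i})$ in $\mathcal{D}^{1,2}_0(\Pi_{\Sigma_i})$ yields $\liminf_\varepsilon\sup\mathcal{G}_\varepsilon\ge\sup\mathcal{G}_0$.

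For the $\limsup$ inequality I let $\eta_\varepsilon\in W^{1,2}_0(D_\varepsilon)$ be the unique maximizer of $\mathcal{G}_\varepsilon$, whose existence follows from Proposition \ref{prop:esisteU} (one has $\lambda_1(D_\varepsilon)>0$ since $D_\varepsilon$ is contained in a slab in the $y_N$-direction). Testing the Euler--Lagrange equation against $\eta_\varepsilon$ itself and using an $\varepsilon$-uniform trace inequality $\|\eta_\varepsilon\|_{L^2(\Sigma_i)}\le C\,\|\nabla\eta_\varepsilon\|_{L^2(D_\varepsilon)}$ (obtained by integrating from $\Sigma_i$ along $\mathbf{e}_N$ on the unit-length portion of the tube, in the spirit of the proof of the trace bound inside Proposition \ref{prop:esisteU}), I deduce a uniform bound $\|\nabla\eta_\varepsilon\|_{L^2}\le C$. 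After zero-extension outside $D_\varepsilon$, a subsequence converges weakly in $\mathcal{D}^{1,2}_0(\Pi_{\Sigma_i})$ to some $\eta^*$, with strong $L^2(\Sigma_i)$-convergence of the traces by compactness on the bounded $\Sigma_i$. Weak lower semicontinuity of the Dirichlet energy and uniform convergence of $f_i(\varepsilon\cdot)$ then give $\limsup_\varepsilon\mathcal{G}_\varepsilon(\eta_\varepsilon)\le\mathcal{G}_0(\eta^*)\le\sup\mathcal{G}_0$, as required. Combining the two inequalities concludes the proof.

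The main obstacle I anticipate is the $\varepsilon$-uniform trace bound together with the correct identification of $\eta^*$ as an element of $\mathcal{D}^{1,2}_0(\Pi_{\Sigma_i})$: since $D_\varepsilon$ is unbounded and varies with $\varepsilon$, one has to check carefully that the zero-extensions all lie in one common Banach space (using, for $N=2$, the Hardy-type characterization of $\mathcal{D}^{1,2}_0$ recalled in Remark \ref{remark:spazi}) and that the traces on $\Sigma_i$ genuinely pass to the weak limit. This is precisely the technical core of \cite[Theorem 2.8]{AO2023} which the statement is quoting.
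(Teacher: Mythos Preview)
The paper does not supply its own proof of this statement: the theorem is quoted from \cite[Theorem~2.8]{AO2023}, with only the remark that the case $N=2$ follows by the same blow-up argument once $\mathcal{D}^{1,2}_0$ is interpreted via the Hardy inequality of Remark~\ref{remark:spazi}. Your sketch is precisely the blow-up argument one expects that reference to contain --- rescale by $1/\varepsilon$, identify the limit functional on $\Pi_{\Sigma_i}$, prove the $\liminf$ with fixed compactly supported competitors and the $\limsup$ via compactness of the rescaled maximizers --- and it is structurally correct. You have also rightly flagged that the change of variables produces the prefactor $\varepsilon^N$ rather than $\varepsilon^2$ for general $N$ (the paper only invokes the result with $N=2$, so nothing downstream is affected), and that the uniform trace bound together with the identification of the weak limit in $\mathcal{D}^{1,2}_0(\Pi_{\Sigma_i})$ is the technical core. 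One small caveat: your justification ``$D_\varepsilon$ lies in a slab'' for $\lambda_1(D_\varepsilon)>0$ needs $\Omega$ to be bounded in the $\nu_i$-direction, which is not part of Assumptions~\ref{ass:domain}; it is cleaner to observe that $\lambda_1(D_\varepsilon)=\varepsilon^2\,\lambda_1(\Omega\cup T_i^\varepsilon)$ and invoke the standing hypothesis $\lambda_1(\Omega_\Sigma)>0$ of Proposition~\ref{prop:esisteU}, or else to run the $\limsup$ argument with near-maximizers.
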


\medskip


\begin{thebibliography}{100}

\bibitem{AO2023} L. Abatangelo, R. Ognibene, Sharp behavior of Dirichlet--Laplacian eigenvalues for a class of singularly perturbed problems, preprint (2023), available at {\tt https://arxiv.org/abs/2301.11729}

	\bibitem{An} A. Ancona, On strong barriers and inequality of Hardy for domains in $\mathbb{R}^n$, J. London Math. Soc., {\bf 34} (1986), 274--290.

\bibitem{AE} M. S. Ashbaugh, P. Exner, Lower bounds to bound state energies in bent tubes,
Phys. Lett. A, {\bf 150} (1990), 183--186.

\bibitem{ABGM} Y. Avishai, D. Bessis, B. G. Giraud, G. Mantica, Quantum bound states in open geometries, Phys. Rev. B, {\bf 44} (1991), 8028--8034.
	
	
	\bibitem{BC} R. Ba\~{n}uelos, T. Carroll, Brownian motion and the foundamental frequency of a drum, Duke Math. J., {\bf 75} (1994), 575--602.

\bibitem{BiBr2} F. Bianchi, L. Brasco, An optimal lower bound in fractional spectral geometry for planar sets with topological constraints, preprint (2023), available at {\tt https://cvgmt.sns.it/paper/5882/} 

\bibitem{BiBr1} F. Bianchi, L. Brasco, The fractional Makai-Hayman inequality, Ann. Mat. Pura Appl. (4), {\bf 201} (2022), 2471--2504.

\bibitem{BS} M. Sh. Birman, M. Z. Solomjak, {\it Spectral theory of selfadjoint operators in Hilbert space.}
Translated from the 1980 Russian original by S. Khrushchev and V. Peller. Mathematics and its Applications (Soviet Series). D. Reidel Publishing Co., Dordrecht, 1987.

\bibitem{Br} P. R. Brown, Constructing mappings onto radial slit domains, Rocky Mountain J. Math., {\bf 37} (2007), 1791--1812. 

\bibitem{BGRS} W. Bulla, F. Gesztesy, W. Renger, B. Simon, Weakly coupled bound states in quantum waveguides, Proc. Amer. Math. Soc., {\bf 125} (1997), 1487--1495.

\bibitem{Bu} G. Buttazzo, Spectral optimization problems, Rev. Mat. Complut., {\bf 24} (2011), 277--322.

\bibitem{CH} R. Courant, D. Hilbert, {\it Methods of mathematical physics. Vol. I.} Interscience Publishers, Inc., New York, N.Y., 1953.

\bibitem{Cr} C. B. Croke, The first eigenvalue of the Laplacian for plane domains, Proc. Amer. Math. Soc., {\bf 81} (1981), 304--305.

\bibitem{DR}  M. Dauge, N. Raymond, Plane waveguides with corners in the small angle limit, J. Math. Phys., {\bf 53} (2012), 123529, 34 pp.

\bibitem{ES} P. Exner, P. \v{S}eba, Bound states in curved quantum waveguides, J. Math. Phys., {\bf 30} (1989), 2574--2580.

\bibitem{EK} P. Exner, H. Kova\v{r}ik, {\it Quantum waveguides}.
Theoretical and Mathematical Physics. Springer, Cham, 2015.


\bibitem{FO} V. Felli, R. Ognibene, Sharp convergence rate of eigenvalues in a domain with a shrinking tube, J. Differential Equations, {\bf 269} (2020), 713--763.

\bibitem{GT} D. Gilbarg, N. S. Trudinger, {\it Elliptic partial differential equations of second order.}
Reprint of the 1998 edition. Classics Math. Springer-Verlag, Berlin, 2001. 

	\bibitem{GJ} J. Goldstone, R. L. Jaffe, Bound states in twisting tubes, Phys. Rev. B, {\bf 45} (1992), 14100--14107.
	
	\bibitem{GR} S. E. Graversen, M. Rao, Brownian Motion and Eigenvalues for the Dirichlet Laplacian, Math. Z., {\bf 203} (1990), 699--708.
	
		
	\bibitem{Ha} W. K. Hayman, Some bounds for principal frequency, Applicable Anal., {\bf 7} (1977/78), 247--254.	

\bibitem{Hen} A. Henrot, {\it Extremum problems for eigenvalues of elliptic operators}. Frontiers in Mathematics. Birkhauser Verlag, Basel, 2006.

\bibitem{HeRev} J. Hersch, Review of the article ``A lower estimation of the principal frequencies of simply connected membranes'' by E. Makai, Mathematical Review MR0185263.

\bibitem{He} J. Hersch, Sur la fr\'equence fondamentale d'une membrane vibrante: \'evaluations par d\'efaut et principe de maximum, Z. Angew. Math. Phys., {\bf 11} (1960), 387--413. 

\bibitem{KrKr} D. Krej\v{c}i\c{r}ik, J. K\v{r}iz, On the spectrum of curved planar waveguides, Publ. Res. Inst. Math. Sci., {\bf 41} (2005), 757--791.

\bibitem{LS} A. Laptev, A. V. Sobolev, Hardy inequalities for simply connected planar domains. {\it Spectral theory of differential operators}, 133--140, Amer. Math. Soc. Transl. Ser. 2, 225, Adv. Math. Sci., 62, Amer. Math. Soc., Providence, RI, 2008.

\bibitem{Leo}  G. Leoni, {\it A first course in Sobolev spaces. Second edition.} Graduate Studies in Mathematics, {\bf 181}. American Mathematical Society, Providence, RI, 2017.

\bibitem{Le} H. A. Levine, An estimate for the best constant in a Sobolev inequality involving three integral norms, Ann. Mat. Pura Appl. (4), {\bf 124} (1980), 181--197.

\bibitem{Mak} E. Makai, A lower estimation of the principal frequencies of simply connected membranes, Acta Math. Acad. Sci. Hungar., {\bf 16} (1965), 319--323.

\bibitem{Maz} V. Maz'ya, {\it Sobolev spaces with applications to elliptic partial differential equations}. Second, revised and augmented edition. Grundlehren der Mathematischen Wissenschaften [Fundamental Principles of Mathematical Sciences], {\bf 342}. Springer, Heidelberg, 2011.

\bibitem{Na} S. A. Nazarov, Discrete spectrum of cross-shaped quantum waveguides, 
J. Math. Sci. (N.Y.), {\bf 196} (2014), 346--376.

\bibitem{Os} R. Osserman, A note on Hayman's theorem on the bass note of a drum, Comment. Math. Helvetici, {\bf 52} (1977), 545--555.

\bibitem{Po} G. Poliquin, Principal frequency of the $p-$Laplacian and the inradius of Euclidean domains, J. Topol. Anal., {\bf7} (2015), 505--511.

\bibitem{SRW} R. L. Schult, D. G. Ravenhall, H. W. Wyld, Quamtum bound states in a classically unbounded system of crossed wires, Phys. Review B., {\bf 39} (1989), 5476--5479.

\bibitem{Si} B. Siudeja, Sharp bounds for eigenvalues of triangles,
Michigan Math. J., {\bf 55} (2007), 243--254.

\bibitem{St} M. Struwe, {\it Variational methods. Applications to nonlinear partial differential equations and Hamiltonian systems. Fourth edition}. Ergebnisse der Mathematik und ihrer Grenzgebiete. 3. Folge. A Series of Modern Surveys in Mathematics, {\bf 34}. Springer-Verlag, Berlin, 2008.

\bibitem{TaG} G. Talenti, Best constant in Sobolev inequality, Ann. Mat. Pura Appl. (4), {\bf 110} (1976), 353--372.

\bibitem{Ta} M. E. Taylor, Estimate on the fundamental frequency of a drum, Duke Math. J. {\bf 46} (1979), 447--453.

\bibitem{TCS} G. Talenti, A. Colesanti, P. Salani, {\it Un'introduzione al Calcolo delle Variazioni: teoria ed esercizi}. Unione Matematica Italiana, 2016.

\bibitem{Te} G. Teschl, {\it Mathematical methods in quantum mechanics. With applications to Schr\"odinger operators.} Second edition. Graduate Studies in Mathematics, {\bf 157}. American Mathematical Society, Providence, RI, 2014.



\end{thebibliography}
\end{document}